\documentclass[11pt]{article}

\usepackage[top=2cm,bottom=2cm,left=2cm,right=2cm]{geometry}
\usepackage{amsmath, amsthm, amssymb, amsfonts}
\usepackage{mathrsfs}
\usepackage{graphicx}
\usepackage{subfiles}
\usepackage{caption}
\usepackage{subcaption}
\usepackage{booktabs}
\usepackage{listings}
\usepackage{xcolor}
\usepackage{multirow}
\usepackage{algorithmic}
\usepackage{algorithm}
\usepackage{array}
\usepackage{float}
\usepackage[colorlinks=true, citecolor=blue, urlcolor=blue, linkcolor=blue]{hyperref}
\usepackage{cleveref}
\usepackage[]{natbib}
\bibpunct{(}{)}{;}{a}{}{,}
\usepackage{authblk}

\theoremstyle{plain}
\newtheorem{theorem}{Theorem}[section]
\newtheorem{proposition}[theorem]{Proposition}
\newtheorem{lemma}[theorem]{Lemma}
\newtheorem{corollary}[theorem]{Corollary}

\theoremstyle{definition}
\newtheorem{definition}[theorem]{Definition}

\newtheorem{remark}[theorem]{Remark}

\title{\textbf{Hub Neighbor-Degree Diagnostics for Sparse Random Graphs}}

\author[a]{Qian Hui \thanks{\href{mailto:qhui24@m.fudan.edu.cn}{qhui24@m.fudan.edu.cn}.}}
\author[a,b]{Tiandong Wang \thanks{Corresponding author, \href{mailto:td_wang@fudan.edu.cn}{td\underline{ }wang@fudan.edu.cn}.}}

\affil[a]{Shanghai Center for Mathematical Sciences, Fudan University}
\affil[b]{Shanghai Academy of Artificial Intelligence for Science}

\date{}

\begin{document}
\maketitle

\begin{abstract}
Networks with nearly identical degree distributions can place their hubs in sharply different neighborhoods. We develop a model diagnostic based on the mean degree of the neighbors of a degree-$k$ vertex. Under rank-one inhomogeneous random graphs, this statistic has degree-invariant centering and $k^{-1/2}$ fluctuations. Under non-rank-one kernels, posterior uncertainty about the root type can instead determine both centering and scale. Under linear preferential attachment, the statistic grows as $(m+\delta)\log k$. We turn these model-specific limits into goodness-of-fit tests for specified sparse-graph nulls and a weighted log-degree slope test for residual hub-neighborhood trends. Simulations evaluate null calibration, degree-distribution misspecification, and power against degree-matched preferential-attachment alternatives. Applications to high-school contact and arXiv coauthorship networks show that the method separates level misspecification from disassortative and positive residual trends. Reddit interaction networks provide a further appendix example.
\end{abstract}

\noindent\textbf{Keywords:} Neighbor degree, local weak convergence, inhomogeneous random graph, preferential attachment, sparse networks.

\section{Introduction}
\label{sec:intro}

Hubs often carry the scientific interpretation of a network. In a Reddit interaction graph, a high-degree user may broadcast to many low-activity users or interact mainly with other highly active users. In a school contact graph, a high-degree student may bridge classes or concentrate contacts within one classroom. In a coauthorship graph, a prolific author may link otherwise separate groups or collaborate inside an already dense circle. These patterns imply different mechanisms, yet they can share nearly the same degree distribution. This paper develops a formal diagnostic for deciding how a fitted sparse-graph model places its hubs.

The problem arises because two standard explanations of degree heterogeneity agree marginally but disagree locally. Rank-one inhomogeneous random graphs, including Chung-Lu and related conditionally Poissonian models, generate heterogeneous degrees from latent propensities \citep{chung2002connected,norros2006conditionally}. Preferential attachment generates them through network growth and cumulative advantage \citep{price1976general,barabasi1999emergence,bollobas2001degree,krapivsky2000connectivity}. A Chung--Lu weight tail index $a_0$ and a linear preferential attachment parameter satisfying $2+\delta/m=a_0$ produce the same limiting degree power law. Consequently, agreement in the degree tail does not determine whether observed hubs are embedded as the fitted mechanism predicts.

This ambiguity has a direct statistical consequence. A degree-based check can accept a sparse null even when the model systematically assigns the wrong neighbors to high-degree vertices. Under a rank-one IRG, the neighbors of a degree-$k$ root follow a size-biased type law whose mean does not grow with $k$. Under an assortative non-rank-one kernel, conditioning on a large degree changes the posterior law of the root type and may induce a degree-dependent mean. Under linear PA with $\delta>0$, a large degree identifies an old root, and its mix of older and younger neighbors produces logarithmic growth. Thus a degree-matched null may fail through its level, its trend over hub degrees, or both. Each failure supports a different interpretation of the underlying network.

We study this disagreement through the conditional mean neighbor degree
\[
  \bar D_k=\frac1k\sum_{u\in\mathcal N(\varnothing)}D(u),
  \qquad D_\varnothing=k,
\]
and ask whether its observed profile agrees with the profile implied by a specified sparse-graph null. The target is deliberately local and conditional. It does not attempt to recover a complete formation history from one graph. Instead, it tests a concrete implication of the fitted model after degree heterogeneity has been accounted for. When the null fails, the residual slope against $\log k$ supplies a signed diagnosis. A positive slope points in the PA direction, a negative slope indicates disassortative hub neighborhoods, and a flat slope agrees with the centered rank-one prediction.

Operationally, the input is one observed network together with a fitted sparse null, such as Chung--Lu or a covariate-adjusted variant. The output is a small set of calibrated summaries: whether hub-neighborhood levels agree with the null, whether the remaining residuals trend with degree, and which direction that trend takes. The procedure is therefore intended as a diagnostic for the fitted null, not as a universal test of all possible network mechanisms.

Several literatures motivate this target but leave the testing problem open. Methods for fitting and testing power-law degree tails establish whether the marginal degree sequence is compatible with heavy-tailed behavior \citep{clauset2009power,broido2019scale}. Tail-index and likelihood-based procedures for PA estimate features of the growth rule \citep{jeong2003measuring,wan2017fitting,wang2019hill,cirkovic2024modeling}. These methods answer questions about marginal degrees or attachment parameters. Our null comparison is instead degree matched and concerns the placement of neighbors around hubs.

Degree assortativity and average nearest-neighbor degree provide the closest descriptive antecedents \citep{newman2002assortative,newman2003mixing,pastorsatorras2001dynamical}. Existing theory analyzes neighbor-degree correlations in growing networks \citep{barrat2005rate,krot2017assortativity} and the behavior of average nearest-neighbor degree in configuration-type scale-free graphs \citep{newman2001random,yao2018average}. Those results establish that neighbor-degree profiles contain information beyond the degree distribution. They do not provide the model-specific centering and variance needed to test a fitted IRG null, nor do they separate an overall level error from a signed residual trend among degree-$k$ hubs.

Our inferential route also differs from existing network goodness-of-fit procedures. Simulation-based checks compare observed structural statistics with graphs drawn from a fitted model \citep{hunter2008goodness}, while spectral tests target broader departures from low-rank structure \citep{lei2016goodness,bickel2016hypothesis,le2017concentration}. Such methods are valuable omnibus checks, but a rejection need not identify how the model misplaces hubs. We use local weak limits \citep{benjamini2011recurrence,aldous2004objective,van2024random} to derive the null law of a degree-conditional statistic and to attach a mechanistic sign to its residual trend.

The paper makes four contributions. First, we derive conditional asymptotics for $\bar D_k$ under IRG and PA local limits. The IRG analysis separates neighbor-sampling noise from posterior uncertainty about the latent root type and thereby identifies distinct rank-one and non-rank-one regimes. The PA analysis proves that the neighbor-degree sum has order $k\log k$ and limit coefficient $m+\delta$. Second, we convert these limits into tests that separate null adequacy from mechanism diagnosis. Methods~1--3 test predicted levels across high-degree classes, whereas Method~4 tests a weighted residual slope against $\log k$. Third, simulations identify both the sensitivity gain from pooling under rank-one mean shifts and its finite-sample cost under a heterogeneous non-rank-one null. All four methods detect degree-matched PA alternatives, while Method~4 supplies directional evidence. Fourth, the applications show that aggregation and class adjustment can change the school-network diagnosis and that similar level failures can accompany positive arXiv or negative Reddit residual trends.

In this paper, Section~\ref{sec:locallimits} presents the IRG and PA local limits used for calibration. Section~\ref{sec:neighbourdegree} derives the conditional asymptotics of $\bar D_k$ and states the resulting model contrast. Section~\ref{sec:methods} develops the level and log-degree slope tests. Section~\ref{sec:simulation} evaluates the procedures under calibrated nulls and matched PA alternatives. Section~\ref{sec:realdata} analyzes the high-school and coauthorship networks. Section~\ref{sec:conclusion} concludes. The appendix reports baseline comparisons, proofs, and the Reddit analysis. Throughout, $\varnothing$ denotes the root, $D_\varnothing$ its degree, and $\mathcal N(\varnothing)$ its neighborhood. We write $\Phi$ for the standard normal distribution function and $z_{1-\alpha}$ for its $(1-\alpha)$ quantile.

\section{Local Limits of Random Graph Models}
\label{sec:locallimits}

\subsection{Local weak convergence}

Local weak convergence formalizes the neighborhood law seen from a uniformly chosen vertex in a large graph \citep{benjamini2011recurrence,aldous2004objective}. For a rooted graph $(G,o)$ and radius $r\geq 1$, let $B_r(G,o)$ be the subgraph induced by vertices within distance $r$ of $o$. A sequence $\{G_n\}$ converges \emph{locally in probability} to $(G_\infty,o_\infty)$ if, for every fixed $r$ and finite rooted graph $H$,
\[
  \frac{1}{n}\sum_{v\in V(G_n)}\mathbf{1}\!\bigl\{B_r(G_n,v)\cong H\bigr\}
  \;\xrightarrow{\mathbb{P}}\;
  \mathbb{P}\bigl(B_r(G_\infty,o_\infty)\cong H\bigr),
\]
where $\cong$ denotes rooted-graph isomorphism. Thus the empirical law of fixed-radius neighborhoods converges to the law around the limiting root.

This framework is well suited to sparse IRG and PA models because their typical fixed-radius neighborhoods converge to random trees. The branching laws of these trees retain the local mechanism needed for our hub diagnostic.

\subsection{Inhomogeneous random graphs}

The \emph{inhomogeneous random graph} $\mathrm{IRG}_n(\kappa_n)$ is constructed as follows. Each vertex $i\in[n]$ is assigned an independent type $X_i$ sampled from a probability measure $\mu$ on a type space $\mathcal{S}$. Conditional on the types, distinct unordered pairs are connected independently with probability
\[
  p_{ij} = \min\!\left\{\frac{\kappa_n(X_i,X_j)}{n},1\right\},
\]
and $A_{ii}=0$. The symmetric kernel $\kappa_n$ encodes affinity between types. The Chung--Lu model is the rank-one case \citep{chung2002connected,norros2006conditionally}
$$
\kappa(x,y)=\frac{w(x)w(y)}{\mathbb{E}[W]},
$$
where $w:\mathcal S\to(0,\infty)$ is a weight function. General kernels, including non-rank-one cases, fall under the Bollob\'{a}s--Janson--Riordan framework \citep{bollobas2007phase}. In either case, latent types generate degree heterogeneity, and vertices with the same type have stochastically identical neighborhoods.

A type-$x$ vertex has asymptotically Poisson degree with mean
\[
\lambda(x)=\int_{\mathcal{S}}\kappa(x,y)\mu(\mathrm{d}y)
\]
which may depend on $x$. The following local convergence result supplies the branching-process representation used below.
\begin{theorem}[Local convergence of $\mathrm{IRG}_n(\kappa_n)$,
  {\citet[Chap.~3]{van2024random}}]
\label{thm:IRGlimit}
Assume that $\kappa_n$ is an irreducible graphical kernel converging to a limiting kernel $\kappa$. Then $\mathrm{IRG}_n(\kappa_n)$ converges locally in probability to a unimodular multi-type marked Galton-Watson tree, in which:
\begin{itemize}
  \item[(i)] the root has type drawn from $\mu$;
  \item[(ii)] a vertex of type $x$ independently has offspring distribution $\mathrm{Poisson}(\lambda(x))$ with
  $$
  \lambda(x)=\int_{\mathcal{S}}\kappa(x,y)\,\mu(\mathrm{d}y);
  $$
  \item[(iii)] each of its offspring receives an independent type $y$ with probability proportional to $\kappa(x,y)\,\mu(\mathrm{d}y)$, i.e., from the distribution
  $$
  Q_x(\mathrm{d}y)=\frac{\kappa(x,y)\mu(\mathrm{d}y)}{\lambda(x)}.
  $$
\end{itemize}
\end{theorem}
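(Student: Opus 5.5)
The plan is the standard second-moment route to local convergence in probability. It has two parts: annealed convergence of the expected neighborhood counts, followed by concentration of the empirical neighborhood counts. Fix $r\ge 1$ and a finite rooted tree $H$. Write
\[
N_{n,r}(H)=\sum_{v\in[n]}\mathbf 1\{B_r(\mathrm{IRG}_n(\kappa_n),v)\cong H\}.
\]
It suffices to show $\mathbb E[N_{n,r}(H)]/n\to \mathbb P(B_r(T,o)\cong H)$ and $\mathrm{Var}(N_{n,r}(H))=o(n^2)$. Here $T$ is the multi-type Galton--Watson tree described in (i)--(iii). Non-tree $H$ need not be treated separately: the total mass of tree neighborhoods tends to one, so their frequencies are asymptotically negligible.

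\emph{Step 1: reduction to finite type spaces.} Because $\kappa_n$ is graphical and converges to $\kappa$, I would sandwich $\kappa_n$ between kernels that are piecewise constant on a finite partition of $\mathcal S$. The lower kernel $\kappa^-_m$ and upper kernel $\kappa^+_m$ should satisfy $\kappa^-_m\le\kappa_n\le\kappa^+_m$ for large $n$ (up to exceptional sets of small $\mu$-measure), with $\int(\kappa^+_m-\kappa^-_m)\,d\mu^2\to0$ as $m\to\infty$. A monotone coupling of the edge indicators then makes $\mathrm{IRG}_n(\kappa^-_m)\subseteq \mathrm{IRG}_n(\kappa_n)\subseteq\mathrm{IRG}_n(\kappa^+_m)$. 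The expected number of edges in the difference graph is $\tfrac n2\int(\kappa^+_m-\kappa^-_m)\,d\mu^2+o(n)$. Since a uniform root has a fixed-radius ball of tight size, the fraction of vertices whose $r$-ball touches a discrepancy edge is small, uniformly in $n$, as $m\to\infty$. On the limit side, the Galton--Watson trees for $\kappa^\pm_m$ converge to the one for $\kappa$ by continuity of Poisson laws in their means. It therefore suffices to prove the theorem for finitely many types.

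\emph{Step 2: finite types, first moment.} With types $1,\dots,K$ and type counts $n_s/n\to\mu_s$, I would explore the ball around a uniformly chosen vertex by breadth-first search. A type-$s$ vertex has $\mathrm{Bin}(n_t-O(1),\kappa_{st}/n)$ unexplored neighbors of type $t$, and these counts are independent across $t$ and across explored vertices. Each binomial converges to $\mathrm{Poisson}(\kappa_{st}\mu_t)$. Summing over $t$ gives a $\mathrm{Poisson}(\lambda(s))$ total with types distributed according to $Q_s$, which is exactly (ii)--(iii). The root type is asymptotically $\mu$-distributed, giving (i). The probability that the exploration of a bounded number of vertices finds a cycle, i.e. an edge between two already discovered vertices, is $O(1/n)$. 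The first-moment limit follows by coupling the exploration with the branching process step by step.

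\emph{Step 3: variance.} For two independent uniform roots $v_1,v_2$, their $r$-balls are disjoint with probability $1-o(1)$, because each ball has tight size. Conditionally on disjointness, the two explorations use disjoint vertex pairs, so the joint exploration couples to two independent copies of $T$. Hence $\mathbb E[N_{n,r}(H)^2]/n^2\to\mathbb P(B_r(T,o)\cong H)^2$, and Chebyshev's inequality gives convergence in probability. Unimodularity of the limit is automatic, since it is a local limit of finite graphs under uniform rooting.

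The main obstacle is Step 1 for unbounded kernels such as the Chung--Lu weights with heavy tails. There, Step 2 alone fails because hubs have unbounded degree, so the truncation error must be controlled through $\int\kappa\,d\mu^2<\infty$ and the convergence $\frac1n\mathbb E[|E(\mathrm{IRG}_n)|]\to\frac12\iint\kappa\,d\mu^2$ built into the definition of a graphical kernel. To handle it, I would use the edge-count convergence together with the coupling to show that the number of vertices whose $r$-balls differ is $o_{\mathbb P}(n)$ in the iterated limit $n\to\infty$, then $m\to\infty$. Irreducibility is not needed for the local limit itself; it matters only for the giant-component statements in \citet{bollobas2007phase}.
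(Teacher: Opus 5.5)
Your proposal is correct and follows essentially the same route as the proof the paper relies on. The paper does not reprove this result; it cites van der Hofstad, Chap.~3, which proves local convergence for finitely many types via breadth-first exploration coupled to the multi-type Poisson tree plus a two-root second-moment bound, and then extends to general graphical kernels by approximating $\kappa_n$ from below with finite-type kernels and controlling the discrepancy through edge-count convergence.

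The only point worth tightening is Step~1:
\begin{itemize}
\item \emph{Drop $\kappa_m^+$.} No bounded finite-type kernel dominates an unbounded $\kappa$, and it is not needed: $\kappa_m^-\le\kappa_n$ together with edge-count convergence already bounds the number of discrepancy edges by $\varepsilon n$ with high probability once $m$ is large.
\item \emph{Justify the ``few touched balls'' claim.} Tightness of $|B_r|$ in the original graph is not yet available at that stage. Use instead that every vertex whose $r$-ball changes lies within distance $r$, in $\mathrm{IRG}_n(\kappa_m^-)$, of a discrepancy endpoint. Such vertices number at most $2\varepsilon nL+\#\{v:|B_{2r}(\mathrm{IRG}_n(\kappa_m^-),v)|>L\}$. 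The last count is $o(n)$ as $L\to\infty$ uniformly in $m$, because the $\kappa_m^-$-tree is a thinning of the $\kappa$-tree.
\end{itemize}
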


The local limit has a direct statistical interpretation. In a large sparse IRG, the neighborhood of a typical vertex is approximated by a branching process whose offspring and neighbor-type distributions are determined by the root type. The distribution $Q_x$ governs which types are likely to be found among the neighbors of a type-$x$ vertex.

\subsection{Preferential attachment}

We use the loop-free simple version of linear preferential attachment $\mathrm{PA}_n^{(m,\delta)}$, where $m\geq1$ and $\delta>-m$. Starting from the complete graph, each new vertex selects $m$ distinct existing vertices sequentially with probabilities proportional to $D(u)+\delta$. Self-loops and parallel edges are excluded, and the fixed finite seed does not affect the local limit. Smaller $\delta$ concentrates attachment more strongly on existing hubs, whereas larger $\delta$ spreads attachment more evenly.

Unlike an IRG, PA creates degree heterogeneity through arrival order. Early vertices have more time to acquire edges and are therefore more likely to become hubs. The P\'{o}lya point tree retains this temporal asymmetry in the local limit \citep{van2024random}.

Each vertex in the P\'olya point tree has a birth time $U\in(0,1)$, with smaller values corresponding to earlier arrival. Although the root birth time is marginally uniform, conditioning on its degree yields the Beta law in Lemma~\ref{lem:posterior-U}. Its neighbors divide into $m$ older vertices chosen when the root arrived and younger vertices that later selected the root. Their different age and degree laws create the hub-neighborhood signature that is absent from a rank-one IRG.

The finite PA graph has the following local limit.
\begin{theorem}[Local convergence of $\mathrm{PA}_n^{(m,\delta)}$,
  {\citet[Chap.~5]{van2024random}}]
\label{thm:PAlimit}
Fix $m\geq 1$ and $\delta>-m$. The loop-free preferential attachment model $\mathrm{PA}_n^{(m,\delta)}$ defined above converges locally in probability to the P\'{o}lya point tree. Standard PA variants that differ only in their finite-seed, self-loop, or parallel-edge conventions have the same local limit.
\end{theorem}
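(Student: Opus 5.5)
The plan is to reduce Theorem~\ref{thm:PAlimit} to the known local convergence result for the standard sequential preferential attachment model in \citet[Chap.~5]{van2024random}. Afterwards I will show that the loop-free simple variant defined above differs from it only on a vanishing fraction of vertices, as measured by fixed-radius neighborhoods.

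The first step is to fix a reference model: the variant treated in \citet[Chap.~5]{van2024random}. There the $j$th edge of a new vertex attaches to an existing vertex with probability proportional to $D(u)+\delta$, using intermediately updated degrees. That model converges locally in probability to the P\'olya point tree with parameters $(m,\delta)$. The standard proof proceeds in three stages:
\begin{itemize}
\item[(a)] represent the graph via the P\'olya urn, or equivalently via independent Beta variables $\psi_j$ and the positions $S_k=\prod_{j>k}(1-\psi_j)$;
\item[(b)] show that $S_k\approx (k/n)^{\chi}$ with $\chi=(m+\delta)/(2m+\delta)$, uniformly on $k\ge \varepsilon n$;
\item[(c)] explore $B_r(G_n,o)$ from a uniform root and couple the exploration with the point-tree exploration, using second-moment (two-root) asymptotic independence to upgrade convergence in distribution to convergence in probability.
\end{itemize}
I will take this as given.

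The second step is to couple $\mathrm{PA}_n^{(m,\delta)}$ with the reference model so that, with high probability, the two graphs agree on $B_r(G_n,v)$ for all but $o(n)$ vertices $v$. The natural coupling builds both graphs edge by edge with the same uniforms. A discrepancy can only be created when the reference model would produce a self-loop or a repeated target. In that case the simple model resamples among the remaining vertices, which also perturbs the later attachment weights.

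Three estimates control this. First, the expected number of self-loops and multi-edges created at step $t$ is $O\bigl(\sum_u (D_u(t)+\delta)^2/t^2\bigr)$, which, using $\mathbb{E} D_u(t)\asymp (t/u)^{1/(2+\delta/m)}$, sums to $o(n)$ over $t\le n$. Second, each such defect alters degrees by $O(1)$, which perturbs subsequent attachment probabilities by a relative amount $O(1/t)$; a martingale/Doob argument bounds the propagated total-variation discrepancy by $o(n)$ affected edges. Third, the number of vertices within distance $r$ of an affected edge is $o(n)$ in probability, because the local limit of the reference model has finite expected ball sizes. Tightness of $|B_r|$ is enough if we truncate hubs. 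Combining these, the empirical neighborhood counts of the two graphs differ by $o_{\mathbb P}(1)$, so the limit transfers.

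The same coupling handles the seed-graph and other convention changes. A fixed finite seed affects only $O(1)$ initial vertices. These vertices have degree $O_{\mathbb P}(n^{1/(2+\delta/m)})=o(n)$, so the set of vertices within distance $r$ of the seed has vanishing density. This again uses a truncation of hub degrees and the fact that uniformly random roots lie at distance $>r$ from any fixed vertex with high probability.

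The main obstacle is the second estimate: showing that the resampling defects do not cascade through the degree-dependent attachment rule. Near early hubs the defect rate is not small, since hubs receive repeated selections frequently. My first attempt would restrict attention to vertices born after $\varepsilon n$ together with a radius-$r$ window. In that regime the P\'olya point tree exploration only sees vertices with ages in $[\varepsilon' n,n]$ with probability $1-O(\varepsilon)$, and there the collision probability per step is $O(1/n)$. I would then let $\varepsilon\to0$ at the end.
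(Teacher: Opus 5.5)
Your reduction to the book's sequential variant follows the paper's route. The paper gives no argument beyond citing \citet[Chap.~5]{van2024random} and asserting that seed, self-loop and parallel-edge conventions do not matter. So all the real content of your proposal is in the transfer step, and as written that step has a genuine gap.

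The first problem is that propagation is not a benign $O(1/t)$ perturbation: it compounds. Couple the simple graph $G_n$ edge by edge, maximally, with the reference graph $G_n'$. Then the $\ell^1$ degree discrepancy $N(t)$ satisfies $\mathbb{E}[N(t+1)-N(t)\mid\mathcal F_t]\le \frac{m(1+o(1))}{(2m+\delta)t}N(t)+2f_t$, where $f_t$ is the conditional collision probability at step $t$. The reason is that every mismatched edge creates two new discrepancy units, and these bias later arrivals in turn. Hence one defect at time $s$ grows, in expectation, to order $(n/s)^{m/(2m+\delta)}$ mismatched edges, not $O(\log(n/s))$. The conclusion survives only because $m/(2m+\delta)=1/(\tau-1)<1$. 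Summing this growth against $\mathbb{E}f_s\lesssim s^{-2}\sum_u\mathbb{E}(D_u(s)+\delta)^2$ gives $\mathbb{E}\,|E(G_n)\triangle E(G_n')|=O(n^{1/(\tau-1)})$ for every $\delta>-m$. This same cascade, not a distance-to-the-seed argument, is what controls a change of seed. Different initial degrees perturb every later attachment law, so the seed's influence is not confined to its $r$-neighbourhood.

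The second problem is that the passage from few mismatched edges to few disturbed roots is unjustified. Finite expected ball sizes fail for $\delta\le0$. An older neighbour of the root then has infinite-mean degree (compare the restriction $\delta>0$ in Proposition~\ref{prop:EO}), so $\mathbb{E}|B_r|=\infty$ for $r\ge2$. More importantly, local convergence controls balls around uniformly chosen roots. It says nothing about balls around an $o(n)$ edge set that is created at hubs and attracted to them. Your remedy of restricting to vertices born after $\varepsilon n$, where collisions cost $O(1/n)$ per step, misses the point. The mismatches that reach young vertices are not collisions located there. They are cascades started by collisions at hubs, since resampled edges and shifted attachment laws land on young vertices at a positive rate.

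The argument can be closed with the polynomial bound above, in two parts.
\begin{itemize}
\item Call a root bad if $B_r(G_n',v)$ contains a vertex of index below $\varepsilon n$. The age-marked form of the reference limit follows from the P\'olya-urn exploration in your step (c), which tracks labels as ages. By it, the fraction of bad roots tends to the probability that the $r$-ball of the P\'olya point tree contains a vertex of age below $\varepsilon$. That probability vanishes as $\varepsilon\to0$.
\item Any other root whose $r$-ball differs between $G_n$ and $G_n'$ has an endpoint of a mismatched edge inside $B_r(G_n',v)\subset[\varepsilon n,n]$. Degrees on $[\varepsilon n,n]$ are $O_{\mathbb P}(\log n)$, so each such endpoint serves at most $(C_\varepsilon\log n)^r$ roots. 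The total is $O_{\mathbb P}(n^{1/(\tau-1)}(\log n)^r)=o_{\mathbb P}(n)$.
\end{itemize}
A bare $o(n)$ bound on mismatched edges would not suffice for this last step.
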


The limiting P\'{o}lya point tree is simple and loop free. We use the same convention in the theory and simulations, so every neighbor degree counts the root--neighbor edge once.

\section{Asymptotics of the Mean Neighbor Degree}
\label{sec:neighbourdegree}

For a root $\varnothing$ with degree $D_\varnothing=k$, define its \emph{mean neighbor degree} by
\[
  \bar{D}_k := \frac{1}{k}\sum_{u\in\mathcal{N}(\varnothing)}D(u).
\]
We derive its conditional law as $k\to\infty$ under the IRG and PA local limits. Sections~\ref{subsec:irg-results} and~\ref{subsec:pa-results} derive the asymptotics under the IRG and PA limits, respectively. After that,  section~\ref{subsec:contrast} states the key contrast.

\subsection{Mean neighbor degree concentrations in IRG}
\label{subsec:irg-results}

We work throughout under the local limit of Theorem~\ref{thm:IRGlimit}. Let $X_\varnothing$ denote the type of the root, and define the expected degree and conditional type distribution of a neighbor:
\[
    \lambda(x):=\int_{\mathcal S}\kappa(x,y)\,\mu(\mathrm{d}y),
    \qquad
    Q_x(\mathrm{d}y):=\frac{\kappa(x,y)\,\mu(\mathrm{d}y)}{\lambda(x)}.
\]
The Galton-Watson tree structure supplies the two conditional independence facts used throughout this subsection. Given $(X_\varnothing=x,D_\varnothing=k)$: (i) the neighbor types $Y_1,\ldots,Y_k$ are independent, each with distribution $Q_x$; and (ii) given $Y_i=y_i$, the forward offspring count $\xi_i$ of neighbor $i$ is $\mathrm{Poisson}(\lambda(y_i))$, independent across $i$. We record these properties for the proofs \citep[Chap.~3]{van2024random}.
\begin{align}
    \mathbb{P}(Y_1\in A_1,\ldots,Y_k\in A_k\mid X_\varnothing=x,D_\varnothing=k)
    &=\prod_{i=1}^k Q_x(A_i), \label{eq:neighbor-type}\\
    \mathbb{P}(\xi_i=n\mid X_\varnothing=x,D_\varnothing=k,Y_i=y)
    &=e^{-\lambda(y)}\frac{\lambda(y)^n}{n!}. \label{eq:offspring}
\end{align}
Since neighbor $i$ has degree $D(i)=1+\xi_i$, the pairs $(Y_i,D(i))$ are conditionally independent given $(X_\varnothing=x,D_\varnothing=k)$. The mean neighbor degree is $\bar D_k=k^{-1}\sum_{i=1}^k D(i)$, an average of $k$ conditionally i.i.d.\ terms.

The first result computes the conditional mean and variance of a single neighbor degree.

\begin{proposition}[Conditional mean and variance of neighbor degree]
\label{prop:irg-cond-mv}
Given $(X_\varnothing=x,\,D_\varnothing=k)$, define
\[
  \rho(x)
  := \int_S\lambda(y)\,Q_x(\mathrm{d}y)
  = \frac{\displaystyle\int_S\lambda(y)\kappa(x,y)\,\mu(\mathrm{d}y)}{\lambda(x)},
  \quad
  \sigma^2(x)
  := \rho(x)+\mathrm{Var}_{Q_x}\!\left(\lambda(Y)\right).
\]
Then for each $i=1,\ldots,k$,
\[
  \mathbb{E}[D(i)\mid X_\varnothing=x,\,D_\varnothing=k]=1+\rho(x),
  \qquad
  \mathrm{Var}(D(i)\mid X_\varnothing=x,\,D_\varnothing=k)=\sigma^2(x).
\]
\end{proposition}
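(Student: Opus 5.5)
The plan is to condition first on the neighbor's type and then average over the type law $Q_x$, using the tower property for the mean and the law of total variance for the variance. Throughout, $x$ and $k$ are fixed and all probabilities are conditional on $(X_\varnothing=x,D_\varnothing=k)$. By \eqref{eq:neighbor-type}, the marginal law of $Y_i$ under this conditioning is $Q_x$. By \eqref{eq:offspring}, given additionally $Y_i=y$, the forward offspring count $\xi_i$ is $\mathrm{Poisson}(\lambda(y))$. Since $D(i)=1+\xi_i$, we get
\[
\mathbb{E}[D(i)\mid X_\varnothing=x,D_\varnothing=k,Y_i=y]=1+\lambda(y),
\]
\[
\mathrm{Var}(D(i)\mid X_\varnothing=x,D_\varnothing=k,Y_i=y)=\lambda(y).
\]
The key point is that a Poisson variable has equal mean and variance.

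For the mean, we integrate $1+\lambda(y)$ against $Q_x(\mathrm{d}y)$, which gives $1+\int\lambda(y)\,Q_x(\mathrm{d}y)=1+\rho(x)$. Substituting $Q_x(\mathrm{d}y)=\kappa(x,y)\mu(\mathrm{d}y)/\lambda(x)$ yields the second expression for $\rho(x)$ in the statement.

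For the variance, we use the law of total variance conditionally on the root data:
\[
\mathrm{Var}(D(i))=\mathbb{E}_{Q_x}\bigl[\mathrm{Var}(D(i)\mid Y_i)\bigr]+\mathrm{Var}_{Q_x}\bigl(\mathbb{E}[D(i)\mid Y_i]\bigr).
\]
The first term equals $\mathbb{E}_{Q_x}[\lambda(Y)]=\rho(x)$. The second term equals $\mathrm{Var}_{Q_x}(1+\lambda(Y))=\mathrm{Var}_{Q_x}(\lambda(Y))$, because adding a constant does not change a variance. Summing the two terms gives $\sigma^2(x)$.

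The only point needing care is integrability. The formulas are meaningful as stated only when $\rho(x)<\infty$ for the mean and $\int\lambda(y)^2Q_x(\mathrm{d}y)<\infty$ for the variance. Both the tower property and the variance decomposition apply to nonnegative variables with values in $[0,\infty]$. So I would note that the identities hold in the extended sense, with both sides infinite together, and are finite exactly under these moment conditions. I would also check that the conditioning event $\{D_\varnothing=k\}$ does not distort the neighbor laws. This is exactly what \eqref{eq:neighbor-type}--\eqref{eq:offspring} assert, because in the multi-type Galton--Watson tree the offspring types are i.i.d.\ $Q_x$ independently of the Poisson count. No further obstacle arises.
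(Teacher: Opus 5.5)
Your proposal is correct and takes the same route as the paper: the tower property over $Y_i\sim Q_x$ with Poisson mean $\lambda(y)$, then the law of total variance conditioning on $Y_i$, using that a Poisson variable has variance $\lambda(y)$. Your integrability caveat, requiring $\rho(x)<\infty$ and $\int\lambda(y)^2\,Q_x(\mathrm{d}y)<\infty$, is a sensible refinement that the paper leaves implicit.
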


The quantity $\rho(x)$ is the expected offspring count of a randomly chosen neighbor of a type-$x$ root. A neighbor sampled from $Q_x$ has type $y$ and therefore Poisson expected degree $1+\lambda(y)$, giving $\mathbb{E}[D(i)\mid x,k]=1+\rho(x)$. The variance $\sigma^2(x)=\rho(x)+\mathrm{Var}_{Q_x}(\lambda(Y))$ decomposes into two sources, i.e. the Poisson noise within a given neighbor type and the between-type spread in expected degrees. Both quantities depend on the root type $x$, but not directly on the realized degree $k$. Conditioning on the number of neighbors does not change the distribution of a single neighbor's degree once the type is fixed.

Since the $k$ neighbor degrees are conditionally i.i.d. given $(X_\varnothing=x,D_\varnothing=k)$, the conditional law of large numbers follows directly.

\begin{theorem}[Conditional Law of Large Numbers (LLN) for IRG]
\label{thm:lln-irg}
For every $\varepsilon>0$ and $x\in S$ with $\sigma^2(x)<\infty$,
\[
  \mathbb{P}\!\left[\,\left|\bar{D}_k-(1+\rho(x))\right|>\varepsilon\mid X_\varnothing=x,\,D_\varnothing=k\right]
  \le \frac{\sigma^2(x)}{k\varepsilon^2}
  \longrightarrow 0
  \quad\text{as }k\to\infty.
\]
\end{theorem}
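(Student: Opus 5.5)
The plan is to apply Chebyshev's inequality to $\bar D_k$ under the conditional law $\mathbb{P}(\cdot\mid X_\varnothing=x,D_\varnothing=k)$, using the moments from Proposition~\ref{prop:irg-cond-mv} together with the conditional independence structure recorded in \eqref{eq:neighbor-type}--\eqref{eq:offspring}.

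\textbf{Step 1: conditional i.i.d.\ structure.} Fix $x$ with $\sigma^2(x)<\infty$ and $k\ge1$, and work under the conditional measure. By \eqref{eq:neighbor-type}, the neighbor types $Y_1,\dots,Y_k$ are i.i.d.\ with law $Q_x$. By \eqref{eq:offspring}, given the types, the forward offspring counts $\xi_i$ are independent $\mathrm{Poisson}(\lambda(Y_i))$. Hence the pairs $(Y_i,\xi_i)$ are i.i.d., and so are the degrees $D(i)=1+\xi_i$.

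One point needs care. Joint independence across $i$ requires that the offspring of neighbor $i$ depend only on $Y_i$, and not on the other neighbors or on the event $\{D_\varnothing=k\}$ beyond fixing the number of children. This holds because in the multi-type Galton--Watson tree of Theorem~\ref{thm:IRGlimit} the root's offspring count is $\mathrm{Poisson}(\lambda(x))$. Conditioning on it equal to $k$ leaves the children's types i.i.d.\ $Q_x$ and their subtrees independent. I will state this explicitly rather than rely only on the marginal display \eqref{eq:offspring}.

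\textbf{Step 2: mean and variance of $\bar D_k$.} By Proposition~\ref{prop:irg-cond-mv}, each $D(i)$ has conditional mean $1+\rho(x)$ and variance $\sigma^2(x)$. Independence from Step 1 gives $\mathbb{E}[\bar D_k\mid x,k]=1+\rho(x)$. Uncorrelatedness then gives
\[
\mathrm{Var}(\bar D_k\mid x,k)=k^{-2}\sum_{i=1}^k\sigma^2(x)=\sigma^2(x)/k.
\]
Finiteness of $\sigma^2(x)$ also guarantees that $\rho(x)<\infty$, since $\rho(x)\le\sigma^2(x)$, so the mean is well defined.

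\textbf{Step 3: Chebyshev.} Applying Chebyshev's inequality under the conditional probability gives the bound $\sigma^2(x)/(k\varepsilon^2)$. Since $\sigma^2(x)$ does not depend on $k$, this bound tends to $0$ as $k\to\infty$.

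The only potential obstacle is the conditioning itself. The event $\{X_\varnothing=x\}$ may have probability zero for continuous $\mu$, so the statement should be read via a regular conditional distribution, i.e.\ in the tree constructed with root type fixed at $x$. The event $\{D_\varnothing=k\}$ has positive probability $e^{-\lambda(x)}\lambda(x)^k/k!$ whenever $\lambda(x)>0$. I would remark on this and then carry out Steps 1--3 in the root-type-$x$ tree, where everything is elementary.
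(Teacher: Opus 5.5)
Your proposal is correct and follows the paper's proof: the paper likewise combines the per-neighbor variance $\sigma^2(x)$ from Proposition~\ref{prop:irg-cond-mv} with the conditional independence of $D(1),\dots,D(k)$ to get $\mathrm{Var}(\bar D_k\mid X_\varnothing=x,D_\varnothing=k)=\sigma^2(x)/k$, and then applies Chebyshev's inequality. Your extra remarks are sound refinements that the paper leaves implicit: you derive joint independence from the Galton--Watson structure rather than from the marginal display \eqref{eq:offspring}, and you read the conditioning on $\{X_\varnothing=x\}$ through the tree with root type fixed at $x$.
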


Theorem~\ref{thm:lln-irg} states that $\bar D_k$ concentrates around the constant $1+\rho(x)$ as the root degree k grows, at the Chebyshev rate $k^{-1}$. The fluctuations are $O(k^{-1/2})$, and a Gaussian approximation follows from the Lyapunov Central Limit Theorem (CLT) applied to the conditionally i.i.d. summands.

\begin{theorem}[Conditional CLT for IRG]
\label{thm:clt-irg-conditional}
Suppose the conditional $(2+\eta)$-th moment of $D(i)$ is uniformly bounded for some $\eta>0$. Then for every $x\in S$ and $t\in\mathbb{R}$,
\[
  \mathbb{P}\!\left[\sqrt{k}\,(\bar{D}_k-(1+\rho(x)))\le t\mid X_\varnothing=x,\,D_\varnothing=k\right]
  \;\longrightarrow\; \Phi\!\left(\frac{t}{\sigma(x)}\right)
  \quad\text{as }k\to\infty.
\]
\end{theorem}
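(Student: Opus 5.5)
Fix a root type $x\in S$ and work conditionally on $(X_\varnothing=x,\,D_\varnothing=k)$. By \eqref{eq:neighbor-type} and \eqref{eq:offspring}, the neighbor degrees $D(1),\dots,D(k)$ are i.i.d. under this conditional law. Moreover, their common distribution does not depend on $k$: it is the law of $1+\xi$, where $Y\sim Q_x$ and $\xi\mid Y=y\sim\mathrm{Poisson}(\lambda(y))$. By Proposition~\ref{prop:irg-cond-mv}, this common law has mean $1+\rho(x)$ and variance $\sigma^2(x)$.

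The statement is therefore a CLT for a row of a triangular array. Row $k$ consists of $k$ i.i.d. copies of a fixed random variable $Z:=D(1)-(1+\rho(x))$. I would write
\[
  \sqrt{k}\,\bigl(\bar D_k-(1+\rho(x))\bigr)=\frac{1}{\sqrt k}\sum_{i=1}^k Z_i ,
\]
where the $Z_i$ are the centered neighbor degrees.

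The plan is to verify Lyapunov's condition for the standardized sums $S_k/(\sqrt{k}\,\sigma(x))$, with $S_k=\sum_{i=1}^k Z_i$. Let $M$ be the uniform bound on $\mathbb{E}[|D(i)|^{2+\eta}\mid x,k]$. By Minkowski and Jensen, $\mathbb{E}|Z_i|^{2+\eta}\le 2^{1+\eta}\bigl(M+(1+\rho(x))^{2+\eta}\bigr)=:C<\infty$. The Lyapunov ratio then satisfies
\[
  \frac{\sum_{i=1}^k\mathbb{E}|Z_i|^{2+\eta}}{(k\sigma^2(x))^{1+\eta/2}}\le \frac{C}{\sigma(x)^{2+\eta}}\,k^{-\eta/2}\to0 .
\]
Lyapunov's CLT then gives $S_k/(\sqrt k\,\sigma(x))\Rightarrow N(0,1)$. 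Hence $\sqrt k(\bar D_k-(1+\rho(x)))\Rightarrow N(0,\sigma^2(x))$, and the convergence of distribution functions holds at every $t$ because the limit is continuous. (Since the summands have a fixed law, the classical Lindeberg--L\'evy CLT would also suffice given $\sigma^2(x)<\infty$. The moment hypothesis just makes the Lyapunov route explicit and uniform.)

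The one point needing care is nondegeneracy: we need $\sigma^2(x)>0$ so that the standardization is legitimate. This holds whenever $\rho(x)>0$, since $\sigma^2(x)\ge\rho(x)$, and $\rho(x)>0$ is guaranteed by irreducibility of $\kappa$. In the degenerate case $\sigma(x)=0$, the variable $Z$ vanishes almost surely and the statement should be read with $\Phi(t/0)$ as the step function $\mathbf 1\{t\ge0\}$. This case should be excluded or treated separately. Otherwise no step is a real obstacle, because the conditional i.i.d. structure has already been supplied by the local limit in Theorem~\ref{thm:IRGlimit}.
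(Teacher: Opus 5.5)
Your proposal is correct and is the paper's argument. Conditional on $(X_\varnothing=x,D_\varnothing=k)$, the centered neighbor degrees are i.i.d.\ with mean zero, variance $\sigma^2(x)$ and bounded $(2+\eta)$-th moments, so the Lyapunov CLT gives the limit; your side remarks (Lindeberg--L\'evy already suffices since the summand law does not depend on $k$, and $\sigma^2(x)>0$ is needed) are valid points the paper leaves implicit.
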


Theorems~\ref{thm:lln-irg} and~\ref{thm:clt-irg-conditional} condition on the root type, which is latent in applications. To obtain a marginal CLT for $\bar D_k$, we first describe how observing $D_\varnothing=k$ updates the latent expected degree $\Lambda:=\lambda(X_\varnothing)$. This is a Bayesian conditioning step, i.e. a large observed degree is evidence for a large latent expected degree.

\begin{proposition}[Posterior of the root's expected degree]
\label{prop:posterior-type}
Let $\Lambda:=\lambda(X_\varnothing)$ and let $\nu=\mu\circ\lambda^{-1}$ be the push-forward of $\mu$ under $\lambda$. Suppose $\nu$ has a Lebesgue density $f_\nu$. Then the conditional density of $\Lambda$ given $D_\varnothing=k$ is
\[
  f_{\Lambda\mid D_\varnothing=k}(\ell)
  = \frac{e^{-\ell}\ell^k\,f_\nu(\ell)}
         {\displaystyle\int_0^\infty e^{-s}s^k\,f_\nu(s)\,\mathrm{d}s},
  \quad \ell>0.
\]
\end{proposition}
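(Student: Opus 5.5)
The plan is a direct Bayes computation. By Theorem~\ref{thm:IRGlimit}(ii), conditionally on $X_\varnothing=x$ the root degree is $\mathrm{Poisson}(\lambda(x))$. The key observation is that this conditional law depends on $x$ only through $\lambda(x)$. Hence $D_\varnothing$ is conditionally independent of $X_\varnothing$ given $\Lambda=\lambda(X_\varnothing)$, and
\[
\mathbb{P}(D_\varnothing=k\mid \Lambda=\ell)=e^{-\ell}\frac{\ell^k}{k!}.
\]
Since the root type has law $\mu$ by Theorem~\ref{thm:IRGlimit}(i), the law of $\Lambda$ is the push-forward $\nu=\mu\circ\lambda^{-1}$, which by assumption has density $f_\nu$ on $(0,\infty)$.

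First, compute the joint law of $(\Lambda,D_\varnothing)$. For a Borel set $B\subset(0,\infty)$, apply the tower property and change variables $\ell=\lambda(x)$:
\[
\mathbb{P}(\Lambda\in B,\,D_\varnothing=k)
=\int_{\mathcal S}\mathbf 1\{\lambda(x)\in B\}\,e^{-\lambda(x)}\frac{\lambda(x)^k}{k!}\,\mu(\mathrm dx)
=\int_B e^{-\ell}\frac{\ell^k}{k!}f_\nu(\ell)\,\mathrm d\ell .
\]
Taking $B=(0,\infty)$ gives the marginal
\[
\mathbb{P}(D_\varnothing=k)=\int_0^\infty e^{-s}\frac{s^k}{k!}f_\nu(s)\,\mathrm ds .
\]
This integral is at most $1$. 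It is strictly positive because $f_\nu$ is a density on $(0,\infty)$ and the Poisson weight is positive there. So conditioning on the positive-probability event $\{D_\varnothing=k\}$ is elementary, and no regular conditional probabilities are needed.

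Next, divide the joint by the marginal. This gives
\[
\mathbb{P}(\Lambda\in B\mid D_\varnothing=k)=\int_B \frac{e^{-\ell}\ell^k f_\nu(\ell)}{\int_0^\infty e^{-s}s^k f_\nu(s)\,\mathrm ds}\,\mathrm d\ell ,
\]
after cancelling the common factor $1/k!$. This identifies the stated density. It is a genuine probability density because it is nonnegative and its integral is one by construction.

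The only point needing care is justifying the reduction from $X_\varnothing$ to $\Lambda$. Here I would note explicitly that in the unimodular Galton--Watson limit the root's offspring count is $\mathrm{Poisson}(\lambda(X_\varnothing))$, with no size-biasing at the root. I would also note that the change-of-variables identity $\int g(\lambda(x))\,\mu(\mathrm dx)=\int g(\ell)\,\nu(\mathrm d\ell)$ holds for the nonnegative measurable function $g(\ell)=\mathbf 1_B(\ell)e^{-\ell}\ell^k/k!$. Everything else is routine.
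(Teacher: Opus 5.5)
Your proposal is correct and follows the same route as the paper: you use the Poisson$(\Lambda)$ likelihood $\mathbb{P}(D_\varnothing=k\mid\Lambda=\ell)=e^{-\ell}\ell^k/k!$, apply Bayes' formula against the prior density $f_\nu$, and cancel the $k!$. Your extra steps simply fill in details the paper's two-line proof leaves implicit: the push-forward change of variables for the joint law, the positivity of $\mathbb{P}(D_\varnothing=k)$, and the absence of size-biasing at the root.
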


Proposition~\ref{prop:posterior-type} shows that conditioning on $D_\varnothing=k$ tilts the prior $f_\nu(\ell)$ by the Poisson likelihood $e^{-\ell}\ell^k$. As $k\to\infty$, this likelihood concentrates around $\ell=k$, so $f_{\Lambda\mid D_\varnothing=k}$ puts mass near $\ell\approx k$. This posterior concentration is the main input for the marginal results.

To connect the conditional-on-type CLT to the marginal setting, define
\[
  r(\ell):=\mathbb{E}[\rho(X_\varnothing)\mid\Lambda=\ell],
  \qquad
  \bar\rho_k:=\mathbb{E}[\rho(X_\varnothing)\mid D_\varnothing=k]
  =\int_0^\infty r(\ell)\,f_{\Lambda\mid D_\varnothing=k}(\ell)\,\mathrm{d}\ell,
\]
and decompose the centered mean neighbor degree as
\begin{equation}
  \bar{D}_k-(1+\bar\rho_k)
  = \underbrace{\bar{D}_k-(1+\rho(X_\varnothing))}_{\displaystyle=:\,\mathrm{Term~I}}
  + \underbrace{\rho(X_\varnothing)-\bar\rho_k}_{\displaystyle=:\,\mathrm{Term~II}}.
  \label{eq:decomp}
\end{equation}
Term~I is the neighbor-averaging noise after the root type has been fixed. Theorem~\ref{thm:clt-irg-conditional} shows this is $O_P(k^{-1/2})$. Term~II is the remaining posterior fluctuation of the root affinity $\rho(X_\varnothing)$ around $\bar\rho_k$. The relative size of these two terms depends on the rank and assortativity of the kernel.

\begin{theorem}[Marginal CLT for Term~I]
\label{thm:marginal-clt-I}
Let $\tilde\sigma^2(\ell):=\mathbb{E}[\sigma^2(X_\varnothing)\mid\Lambda=\ell]$ and assume, in addition to the uniform $(2+\eta)$-moment bound of Theorem~\ref{thm:clt-irg-conditional}:
\begin{enumerate}
  \item[\rm(V1)] \emph{Uniform ellipticity:} $\inf_{x\in S}\sigma^2(x)\ge\sigma_{\min}^2>0$;
  \item[\rm(V2)] \emph{Posterior variance stabilization:}
        $\sigma^2(X_\varnothing)\xrightarrow{\mathbb{P}(\,\cdot\mid D_\varnothing=k)}\tilde\sigma^2_\infty\in(0,\infty)$
        as $k\to\infty$.
\end{enumerate}
Then for every $t\in\mathbb{R}$,
\[
  \mathbb{P}\!\left[\sqrt{k}\,(\bar{D}_k-(1+\rho(X_\varnothing)))\le t\mid D_\varnothing=k\right]
  \;\longrightarrow\;\Phi\!\left(\frac{t}{\tilde\sigma_\infty}\right)
  \quad\text{as }k\to\infty.
\]
A primitive sufficient condition for \emph{(V2)} is $\tilde\sigma^2(\ell)\to\tilde\sigma^2_\infty$ together with $\mathrm{Var}\bigl(\sigma^2(X_\varnothing)\mid\Lambda=\ell\bigr)\to 0$ as $\ell\to\infty$, since the posterior of $\Lambda$ concentrates on $\{\Lambda\to\infty\}$ by Proposition~\ref{prop:posterior-type}. In the rank-one case $\sigma^2(x)\equiv\tilde\sigma^2_\infty$ is constant, so \emph{(V1)}-\emph{(V2)} hold automatically.
\end{theorem}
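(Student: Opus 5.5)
The plan is to condition on the latent root type, use a Berry--Esseen-type bound that is uniform in the type, and then average over the posterior of $X_\varnothing$ given $D_\varnothing=k$. Write
\[
F_k(t):=\mathbb{P}\!\left[\sqrt{k}\,(\bar D_k-(1+\rho(X_\varnothing)))\le t\mid D_\varnothing=k\right]
=\mathbb{E}\!\left[G_k(t;X_\varnothing)\mid D_\varnothing=k\right],
\]
where $G_k(t;x)$ is the conditional probability given $(X_\varnothing=x,D_\varnothing=k)$. By \eqref{eq:neighbor-type}--\eqref{eq:offspring}, conditional on $(x,k)$ the summands $D(i)-(1+\rho(x))$ are i.i.d. Proposition~\ref{prop:irg-cond-mv} gives their mean $0$ and variance $\sigma^2(x)$.

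The first step is a \emph{uniform} version of Theorem~\ref{thm:clt-irg-conditional}. The Berry--Esseen theorem with $(2+\eta)$ moments, in the Katz--Petrov form for $\eta\in(0,1]$, gives
\[
\sup_t\bigl|G_k(t;x)-\Phi(t/\sigma(x))\bigr|\le C\,\frac{\mathbb{E}|D(i)-1-\rho(x)|^{2+\eta}}{\sigma(x)^{2+\eta}\,k^{\eta/2}}.
\]
The numerator is bounded uniformly in $x$ by the uniform moment hypothesis; centering costs at most a constant factor by Minkowski, and $1+\rho(x)$ is the mean, so it is controlled by the same moment. The denominator is bounded below by $\sigma_{\min}^{2+\eta}$ using (V1). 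Hence $\sup_{x,t}|G_k(t;x)-\Phi(t/\sigma(x))|\le C' k^{-\eta/2}\to0$. This uniformity is what lets us integrate over the posterior of the type, and it is exactly where (V1) enters.

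Next, substitute into $F_k$:
\[
\bigl|F_k(t)-\mathbb{E}[\Phi(t/\sigma(X_\varnothing))\mid D_\varnothing=k]\bigr|\le C'k^{-\eta/2}.
\]
The map $s\mapsto\Phi(t/\sqrt{s})$ is bounded and continuous on $[\sigma_{\min}^2,\infty)$. By (V2), $\sigma^2(X_\varnothing)\to\tilde\sigma^2_\infty$ in probability under $\mathbb{P}(\cdot\mid D_\varnothing=k)$. The continuous mapping theorem together with bounded convergence then gives $\mathbb{E}[\Phi(t/\sigma(X_\varnothing))\mid D_\varnothing=k]\to\Phi(t/\tilde\sigma_\infty)$. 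The case $t=0$ is trivial, and for $t\neq0$ continuity at $\tilde\sigma^2_\infty>0$ suffices. This proves the main claim.

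For the primitive sufficient condition, fix $\epsilon>0$ and choose $L$ such that, for $\ell>L$, both $|\tilde\sigma^2(\ell)-\tilde\sigma^2_\infty|<\epsilon/2$ and $\mathrm{Var}(\sigma^2(X_\varnothing)\mid\Lambda=\ell)<\epsilon^3$ hold. The key structural point is that $D_\varnothing$ is conditionally independent of $X_\varnothing$ given $\Lambda$, because the root's degree is $\mathrm{Poisson}(\Lambda)$. Therefore conditioning additionally on $D_\varnothing=k$ does not alter the law of $X_\varnothing$ given $\Lambda=\ell$. Chebyshev's inequality then gives
\[
\mathbb{P}\bigl(|\sigma^2(X_\varnothing)-\tilde\sigma^2_\infty|>\epsilon\mid D_\varnothing=k\bigr)\le \mathbb{P}(\Lambda\le L\mid D_\varnothing=k)+4\epsilon.
\]
The first term vanishes as $k\to\infty$ because of the posterior concentration from Proposition~\ref{prop:posterior-type}. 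To make this rigorous, compare the tilt $e^{-\ell}\ell^k$ on $[0,L]$, which is at most $L^k$, with its mass on $[2L,3L]$, which is at least $e^{-3L}(2L)^k\nu([2L,3L])$. This needs $\nu$ to have unbounded support, which is implicit in taking $\Lambda\to\infty$.

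In the rank-one case, $Q_x(\mathrm{d}y)\propto w(y)\mu(\mathrm{d}y)$ does not depend on $x$, so both $\rho$ and $\sigma^2$ are constant. In that case (V1)--(V2) are immediate, given that the constant is positive, which holds whenever $\rho>0$.

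The main obstacle is the uniformity in the first step: Theorem~\ref{thm:clt-irg-conditional} is only pointwise in $x$, while the posterior of $X_\varnothing$ moves with $k$. The quantitative Berry--Esseen bound combined with (V1) is the device that handles this.
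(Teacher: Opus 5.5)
Your proposal is correct and follows the paper's proof essentially step for step. Both arguments use the tower property over the posterior of $X_\varnothing$, a Berry--Esseen--Katz bound made uniform in $x$ through the uniform $(2+\eta)$-moment hypothesis and (V1), the bounded continuous-mapping step under (V2), and Chebyshev at fixed $\Lambda=\ell$ for the primitive condition. Your explicit posterior-escape argument adds detail the paper only asserts, but $\nu([2L,3L])$ itself could vanish. Replace $[2L,3L]$ by any interval $[a,b]$ with $a>L$ and $\nu([a,b])>0$; unbounded support guarantees such an interval exists.
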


Theorem~\ref{thm:marginal-clt-I} controls only the neighbor-averaging component. In the \emph{non-rank-one} case, Term~II may dominate. Here the type need not be determined by $\Lambda=\lambda(X_\varnothing)$, so the affinity $\rho(X_\varnothing)$ carries fluctuation from two distinct sources. Write
\begin{equation}
  \rho(X_\varnothing):=r(\Lambda)
   +\zeta,
  \qquad \zeta:=\rho(X_\varnothing)-r(\Lambda)
  \label{eq:rho-split}
\end{equation}
where $\mathbb{E}[\zeta\mid\Lambda]=0$,
and set 
$$
    \omega^2(\ell):=\mathrm{Var}(\rho(X_\varnothing)\mid\Lambda=\ell)=\mathbb{E}[\zeta^2\mid\Lambda=\ell]
$$
for the within-level-set variance.
$r(\Lambda)-\bar\rho_k$ reflects posterior uncertainty about $\Lambda$ and, when $r$ varies, fluctuates at scale $|r'(\ell_k^*)|\sqrt{v_k}\asymp|r'(\ell_k^*)|\sqrt k$. $\zeta$ is a single, non-averaged draw of the root's residual type at scale $\omega(\ell_k^*)$, while it is not an average and need not be asymptotically Gaussian.

\begin{theorem}[Full CLT in the non-rank-one case]
\label{thm:full-clt}
Assume the conditions of Theorem~\ref{thm:marginal-clt-I}. Define $h_k(\ell):=k\log\ell-\ell+\log f_\nu(\ell)$, let $\ell_k^*$ be the maximizer of $f_{\Lambda\mid D_\varnothing=k}$ (solution to $h_k'(\ell)=0$), set $v_k:=-1/h_k''(\ell_k^*)$, and let $r$, $\zeta$, $\omega^2$ be as above.
Under conditions:
\begin{enumerate}
  \item[\rm(R1)] \emph{Laplace approximation:}
        $(\Lambda-\ell_k^*)/\sqrt{v_k}\xrightarrow{d}N(0,1)$
        under $\mathbb{P}(\,\cdot\mid D_\varnothing=k)$;
  \item[\rm(R2)] \emph{Non-negligible affinity gradient:}
        $r$ is eventually $C^2$ with
        \[
          k\,|r'(\ell_k^*)|\to\infty,
          \qquad
          \sqrt{k}\,|r''(\ell_k^*)|=o\bigl(|r'(\ell_k^*)|\bigr),
        \]
        and $\sup_{|\ell-\ell_k^*|\le\delta\sqrt{v_k}}|r''(\ell)|=O(|r''(\ell_k^*)|)$ for some $\delta>0$;
  \item[\rm(R3)] \emph{Within-level-set negligibility:}
        $\omega(\ell_k^*)=o\bigl(|r'(\ell_k^*)|\sqrt{v_k}\bigr)$.
\end{enumerate}
Then Term~II in Eq.\eqref{eq:decomp} dominates Term~I, and for every $t\in\mathbb{R}$,
\[
  \mathbb{P}\!\left[\frac{\bar D_k-(1+\bar\rho_k)}{|r'(\ell_k^*)|\sqrt{v_k}}\le t\mid D_\varnothing=k\right]
  \;\longrightarrow\;\Phi(t)
  \quad\text{as }k\to\infty.
\]
\end{theorem}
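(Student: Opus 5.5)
We would prove the statement by treating the decomposition \eqref{eq:decomp} one piece at a time. Set $s_k:=|r'(\ell_k^*)|\sqrt{v_k}$, and split Term~II further using \eqref{eq:rho-split}:
\[
  \bar D_k-(1+\bar\rho_k)=\mathrm{I}+\bigl(r(\Lambda)-\bar\rho_k\bigr)+\zeta .
\]
The plan has three parts. We show that $\mathrm{I}/s_k\to0$ and $\zeta/s_k\to0$ in probability under $\mathbb{P}(\cdot\mid D_\varnothing=k)$. We show that $(r(\Lambda)-\bar\rho_k)/s_k$ converges in distribution to $N(0,1)$. Slutsky's lemma then gives the conclusion.

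\emph{Term~I.} By Theorem~\ref{thm:marginal-clt-I}, $\sqrt{k}\,\mathrm{I}$ is tight under the conditional law given $D_\varnothing=k$, so $\mathrm{I}=O_P(k^{-1/2})$. To get $\mathrm{I}/s_k\to0$ we need $k^{-1/2}=o(s_k)$. We first show that $v_k\asymp k$. From $h_k''(\ell)=-k/\ell^2+(\log f_\nu)''(\ell)$ and the fact that (R1) forces $\ell_k^*\sim k$, we get $v_k\sim(\ell_k^*)^2/k\sim k$, assuming $(\log f_\nu)''=o(1/k)$ near $\ell_k^*$. That assumption holds for regularly varying $f_\nu$, and we would make it explicit where it is used. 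Then $s_k\asymp|r'(\ell_k^*)|\sqrt{k}$, so $\sqrt{k}\,s_k\asymp k|r'(\ell_k^*)|\to\infty$ by (R2). This is exactly the first half of (R2).

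\emph{Residual $\zeta$.} Since $\mathbb{E}[\zeta^2\mid\Lambda]=\omega^2(\Lambda)$, Chebyshev's inequality conditional on $\Lambda$, followed by integration over the posterior, gives
\[
  \mathbb{P}(|\zeta|>\varepsilon s_k\mid D_\varnothing=k)\le\mathbb{E}[\omega^2(\Lambda)\mid D_\varnothing=k]/(\varepsilon s_k)^2 .
\]
For this conditional step we use that, given $\Lambda$, the observation $D_\varnothing$ is Poisson$(\Lambda)$ and carries no further information on $X_\varnothing$, so conditioning on $D_\varnothing=k$ does not change the law of $\zeta$ given $\Lambda$. (R3) controls $\omega$ only at $\ell_k^*$. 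We would therefore read (R3) as holding uniformly on the window $|\ell-\ell_k^*|\le\delta\sqrt{v_k}$, and combine this with (R1), which puts posterior mass $1-o(1)$ near that window. Splitting on the event of the window and using a truncation argument outside it handles the rest.

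\emph{Main term.} On the window $|\Lambda-\ell_k^*|\le\delta\sqrt{v_k}$ we Taylor expand:
\[
  r(\Lambda)=r(\ell_k^*)+r'(\ell_k^*)(\Lambda-\ell_k^*)+R,\qquad |R|\le C|r''(\ell_k^*)|(\Lambda-\ell_k^*)^2 .
\]
The constant $C$ comes from the sup condition in (R2). Dividing by $s_k$, the remainder term is
\[
  O_P\!\bigl(|r''(\ell_k^*)|v_k/(|r'(\ell_k^*)|\sqrt{v_k})\bigr)=O_P\!\bigl(\sqrt{k}\,|r''(\ell_k^*)|/|r'(\ell_k^*)|\bigr)=o_P(1)
\]
by the second part of (R2). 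The linear term divided by $s_k$ equals $\pm(\Lambda-\ell_k^*)/\sqrt{v_k}$, which converges to $N(0,1)$ by (R1); the sign is irrelevant by symmetry of the normal law. The window has posterior probability tending to $1$, but only if $\delta$ is allowed to grow, whereas (R2) fixes $\delta$. We handle this with the usual device: for each $\varepsilon$ the event $\{|\Lambda-\ell_k^*|>M\sqrt{v_k}\}$ has probability at most $\varepsilon$ by tightness, so we restrict to $M\le\delta$, or rescale.

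\emph{Centering.} This is the step we expect to be the main obstacle. The steps above give
\[
  \bigl(r(\Lambda)-r(\ell_k^*)\bigr)/s_k\Rightarrow N(0,1),
\]
but the theorem centers at $\bar\rho_k=\mathbb{E}[r(\Lambda)\mid D_\varnothing=k]$. We therefore need $(\bar\rho_k-r(\ell_k^*))/s_k\to0$. Convergence in distribution does not control means, so this requires uniform integrability of $(r(\Lambda)-r(\ell_k^*))/s_k$ under the posterior. The first thing we would try is a Laplace-type tail bound: log-concavity of $h_k$ near its mode gives sub-Gaussian posterior tails for $(\Lambda-\ell_k^*)/\sqrt{v_k}$, and polynomial growth of $r$ then gives bounded second moments. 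Alternatively, we would state that uniform integrability as part of the Laplace condition (R1). Granting it, the linear term has mean $o(s_k)$ because the posterior mean of $(\Lambda-\ell_k^*)/\sqrt{v_k}$ tends to $0$, and the quadratic remainder has mean $o(s_k)$ by the bound above. Slutsky's lemma then completes the proof. It also shows that Term~II dominates Term~I, since $\mathrm{I}/s_k\to0$.
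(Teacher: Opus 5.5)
Your proposal is correct in outline and follows the paper's proof essentially step for step. It uses the same three-way split $\mathrm{I}+(r(\Lambda)-\bar\rho_k)+\zeta$, relying on the fact that $D_\varnothing$ depends on $X_\varnothing$ only through $\Lambda$; it handles Term~I by Theorem~\ref{thm:marginal-clt-I} together with $k|r'(\ell_k^*)|\to\infty$, handles $\zeta$ by conditional Chebyshev and (R3), and handles $r(\Lambda)-\bar\rho_k$ by a second-order Taylor expansion at $\ell_k^*$, with (R1) for the linear part, (R2) for the remainder, and Slutsky to finish.

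The additional regularity you flag is what the paper also uses implicitly:
\begin{itemize}
\item $v_k\asymp k$: the paper gets this by invoking Proposition~\ref{prop:r1-primitive}. Note that (R1) alone does not force $\ell_k^*\sim k$; in general $\ell_k^*\sim ck$ with $c\le 1$.
\item Control of $r''$ beyond a fixed $\delta$-window: the paper simply asserts that the fixed-$\delta$ window has posterior probability tending to one. Your fix of taking $M\le\delta$ conflicts with needing $M$ large for tightness. What is really needed is the $r''$ bound for every fixed $\delta$, which holds in the setting of Proposition~\ref{prop:assortativity}.
\item (R3) holding near $\ell_k^*$ rather than only at $\ell_k^*$: the paper covers this by appealing to continuity of $\omega$.
\item Uniform integrability for the centering: the paper covers this by claiming an $O(1)$ mean--mode gap together with $\mathbb{E}[(\Lambda-\ell_k^*)^2\mid D_\varnothing=k]=O(v_k)$.
\end{itemize}
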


\begin{corollary}[One-dimensional type]
\label{cor:full-clt-1d}
If the latent type is one-dimensional with $\lambda$ strictly increasing, then $\Lambda=\lambda(X_\varnothing)$ determines $X_\varnothing$ up to null sets, and hence $\rho(X_\varnothing)=r(\Lambda)$ for $r=\rho\circ\lambda^{-1}$. Then $\zeta\equiv0$, $\omega\equiv0$, and \emph{(R3)} holds automatically. Theorem~\ref{thm:full-clt} then holds under \emph{(R1)} and \emph{(R2)} alone.
\end{corollary}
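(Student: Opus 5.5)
The corollary is essentially a measure-theoretic remark followed by a direct appeal to Theorem~\ref{thm:full-clt}, so I will argue in two steps. First I will show that $\Lambda$ determines $X_\varnothing$ up to null sets. Let $\mathcal S\subseteq\mathbb R$ and let $\lambda$ be strictly increasing. Then $\lambda$ is injective, and its inverse $\lambda^{-1}$, defined on the range $\lambda(\mathcal S)$, is again monotone and hence Borel measurable. It follows that $X_\varnothing=\lambda^{-1}(\Lambda)$ almost surely. The only care needed is that the range of a monotone map may fail to be an interval when $\mu$ has gaps in its support. This is harmless, because $\Lambda\in\lambda(\mathcal S)$ holds $\mu$-almost surely, so $\lambda^{-1}$ only has to be defined on that set, or on a Borel set of full $\nu$-measure.

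Next I will identify $r$. Since $X_\varnothing$ is $\sigma(\Lambda)$-measurable, $\rho(X_\varnothing)=\rho(\lambda^{-1}(\Lambda))$ is also $\sigma(\Lambda)$-measurable. Therefore
\[
r(\ell)=\mathbb E[\rho(X_\varnothing)\mid\Lambda=\ell]=\rho(\lambda^{-1}(\ell))
\]
for $\nu$-a.e.\ $\ell$, which gives $r=\rho\circ\lambda^{-1}$. Substituting into the definition \eqref{eq:rho-split}, $\zeta=\rho(X_\varnothing)-r(\Lambda)=0$ almost surely. Consequently $\omega^2(\ell)=\mathbb E[\zeta^2\mid\Lambda=\ell]=0$ for $\nu$-a.e.\ $\ell$. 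Taking the version of the conditional variance that vanishes identically, we get $\omega(\ell_k^*)=0=o(|r'(\ell_k^*)|\sqrt{v_k})$ as soon as $r'(\ell_k^*)\neq0$. This is guaranteed eventually by (R2), since $k|r'(\ell_k^*)|\to\infty$. So (R3) holds. The standing assumptions of Theorem~\ref{thm:marginal-clt-I} are still required, and I read the corollary as keeping them in force.

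Finally, I will apply Theorem~\ref{thm:full-clt} with (R1) and (R2) assumed and (R3) now established. This yields the stated Gaussian limit. The one point I expect to need care, mild as it is, is the ``up to null sets'' bookkeeping. The conditional expectation $r$ is only defined $\nu$-a.e., whereas (R2) imposes smoothness on $r$ pointwise. I would handle this by fixing the version $r:=\rho\circ\lambda^{-1}$ and imposing (R2) on that version. Because Theorem~\ref{thm:full-clt} uses $r$ only through $r(\Lambda)$ and its Taylor expansion around $\ell_k^*$, the choice of version does not affect the conclusion.
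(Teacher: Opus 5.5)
Your proposal is correct and follows essentially the paper's route. The paper notes at the end of the proof of Theorem~\ref{thm:full-clt} that $\zeta\equiv0$ for one-dimensional type, so the within-level-set step is vacuous. You reach the same point by checking (R3) directly, spelling out the measurability and version bookkeeping (including fixing the version $r=\rho\circ\lambda^{-1}$), and then invoking the theorem.
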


The conditions isolate three sources of behavior. Condition (R1) controls posterior concentration of $\Lambda$ around $\ell_k^*\sim k$ at scale $\sqrt{v_k}\asymp\sqrt{k}$. Condition (R2) keeps the affinity variation above the $k^{-1/2}$ neighbor-sampling noise while controlling the Taylor remainder. Condition (R3) makes residual type variation negligible after conditioning on $\Lambda$. It is automatic for one-dimensional type and holds more generally when $\omega(\ell)$ decays sufficiently fast. Proposition~\ref{prop:assortativity} supplies an interpretable assortativity condition for (R2). Consequently, neighbor averaging governs the rank-one scale, while posterior type fluctuation dominates in the assortative non-rank-one regime.

\begin{remark}[When (R3) fails]
\label{rem:r3-fails}
Condition (R3) can genuinely fail. When it fails, the normal approximation above is no longer the natural calibration. The full standardizing scale is the conditional standard deviation $s_k:=\mathrm{sd}(\bar D_k\mid D_\varnothing=k)$, whose square decomposes into three orthogonal pieces,
\[
  s_k^2=\underbrace{\mathbb{E}[\sigma^2(X_\varnothing)\mid D_\varnothing=k]/k}_{\text{Term I}}
       +\underbrace{\mathrm{Var}(r(\Lambda)\mid D_\varnothing=k)}_{\text{between-level-set}}
       +\underbrace{\mathbb{E}[\omega^2(\Lambda)\mid D_\varnothing=k]}_{\text{within-level-set}}
       +o(\cdot),
\]
of orders $k^{-1}$, $r'(\ell_k^*)^2 v_k$, and $\omega^2(\ell_k^*)$ respectively. When the within-level-set order is comparable to or larger than the other two, the dominant fluctuation is the single draw $\zeta$, whose limit law is that of $\rho(X_\varnothing)-r(\Lambda)$ at $\Lambda\approx\ell_k^*$ and is generally \emph{non-Gaussian}. For example, in a finite-type block kernel with community-dependent affinity, $\omega^2(\ell)\asymp \mathbb{P}(\text{minority community}\mid\Lambda=\ell)$, which decays only polynomially when the community weight tails differ; if that decay is slower than $k^{-1/2}$ the standardized statistic is asymptotically non-Gaussian. In such cases, the analytic normal calibration should be replaced by a resampling calibration of $s_k$, as in the class-block analysis of Section~\ref{subsec:highschool_examples}.
\end{remark}

We first verify (R1) from primitive regularity conditions on the latent-degree density $f_\nu$.

\begin{proposition}[Primitive sufficient conditions for (R1)]
\label{prop:r1-primitive}
Suppose $f_\nu\in C^3(0,\infty)$ with $f_\nu(\ell)>0$ for all $\ell>0$, and that
\begin{enumerate}
  \item[\rm(L1)] $Z_k:=\int_0^\infty \ell^{\,k}e^{-\ell}f_\nu(\ell)\,\mathrm{d}\ell<\infty$ for each $k\ge 1$;
  \item[\rm(L2)] $\ell^2\,|(\log f_\nu)''(\ell)|=O(1)$ as $\ell\to\infty$;
  \item[\rm(L3)] $\ell^3\,|(\log f_\nu)'''(\ell)|=O(1)$ as $\ell\to\infty$.
\end{enumerate}
Then condition \emph{(R1)} of Theorem~\ref{thm:full-clt} holds, i.e.\ $(\Lambda-\ell_k^*)/\sqrt{v_k}\xrightarrow{d}N(0,1)$ under $\mathbb{P}(\,\cdot\mid D_\varnothing=k)$ as $k\to\infty$. The proof is given in Section~\ref{subsec:proof-irg}.
\end{proposition}

\noindent
Conditions (L2)-(L3) require only that the log-prior's curvature and third derivative decay at least as fast as $\ell^{-2}$ and $\ell^{-3}$. This ensures that the Poisson likelihood dominates the prior at scale $\sqrt{k}$. Two canonical cases are:
\begin{itemize}
  \item \emph{Pareto:} $f_\nu(\ell)\propto\ell^{-(a_0+1)}$, $\ell>w_{\min}$.
        Then $(\log f_\nu)''=(a_0+1)/\ell^2$, $(\log f_\nu)'''=-2(a_0+1)/\ell^3$,
        both satisfying (L2)--(L3). The posterior $\Lambda\mid D_\varnothing=k$
        is exactly $\mathrm{Gamma}(k-a_0,1)$ on $(w_{\min},\infty)$, so
        (R1) follows directly from the Gamma CLT.
  \item \emph{Exponential:} $f_\nu(\ell)\propto e^{-\mu\ell}$.
        Then $(\log f_\nu)''=0$, (L2)--(L3) hold trivially,
        and the posterior is $\mathrm{Gamma}(k+1,1+\mu)$, again satisfying (R1) exactly.
\end{itemize}

We now turn to condition (R2), whose statistical content is degree assortativity \citep{newman2002assortative,newman2003mixing}. The function $r(\ell)=\mathbb{E}[\rho(X_\varnothing)\mid\Lambda=\ell]$ is the expected affinity of a neighbor of a root with latent degree $\ell$, so the growth of $r$ measures how strongly high-degree vertices attach to other high-degree vertices. We make this quantitative.

\begin{definition}[Assortativity index]
\label{def:assortativity}
A one-dimensional IRG kernel is \emph{assortative of index $\vartheta\in[0,1)$} if $r$ is eventually $C^2$ and regularly varying with
\[
  r'(\ell)=\ell^{-\vartheta}L(\ell),
  \qquad
  r''(\ell)=O\bigl(\ell^{-\vartheta-1}L(\ell)\bigr)
  \quad\text{as }\ell\to\infty,
\]
for some slowly varying $L>0$. Equivalently, along the tail,
\[
  \rho(x)=r(\lambda(x))
  \asymp \lambda(x)^{1-\vartheta}L(\lambda(x)).
\]
\end{definition}

The index $\vartheta$ interpolates between strong and weak degree assortativity. At $\vartheta=0$ the affinity grows linearly, $\rho(x)\asymp\lambda(x)$, the maximal-assortativity case in which a hub's neighbors are themselves hubs; this recovers the simple condition $\lim_k|r'(\ell_k^*)|=L_\infty>0$. As $\vartheta\uparrow 1$ the gradient flattens and Term~II loses its dominance over Term~I, so the boundary $\vartheta=1$ marks the transition to the rank-one (non-assortative) regime of Theorem~\ref{thm:clt-rank-one}, where $r$ is constant and Term~II vanishes.

\begin{proposition}[Assortativity implies (R2)]
\label{prop:assortativity}
If a one-dimensional IRG kernel is assortative of index $\vartheta\in[0,1)$ in the sense of Definition~\ref{def:assortativity}, and (R1) holds with $\ell_k^*\sim ck$ and $v_k\sim c^2k$ as in Proposition~\ref{prop:r1-primitive}, then condition (R2) of Theorem~\ref{thm:full-clt} is satisfied. Since the type is one-dimensional, (R3) holds vacuously (Corollary~\ref{cor:full-clt-1d}), so the non-rank-one CLT applies:
$$
    \bigl(\bar D_k-(1+\bar\rho_k)\bigr)/\bigl(|r'(\ell_k^*)|\sqrt{v_k}\bigr)\xrightarrow{d}N(0,1).
$$
The proof is given in Section~\ref{subsec:proof-irg}.
\end{proposition}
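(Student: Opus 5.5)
We verify the three parts of (R2) directly from Definition~\ref{def:assortativity} and the asymptotics $\ell_k^*\sim ck$, $v_k\sim c^2k$. Then we invoke Corollary~\ref{cor:full-clt-1d} together with Theorem~\ref{thm:full-clt}. Eventual $C^2$ regularity of $r$ is part of the definition, so three growth conditions remain.

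\emph{First condition.} Since $r'(\ell)=\ell^{-\vartheta}L(\ell)$ and $\ell_k^*\sim ck$, the uniform convergence theorem for slowly varying functions gives $L(\ell_k^*)\sim L(ck)\sim L(k)$. Hence
\[
|r'(\ell_k^*)|\sim c^{-\vartheta}k^{-\vartheta}L(k),
\qquad
k\,|r'(\ell_k^*)|\asymp k^{1-\vartheta}L(k).
\]
Because $\vartheta<1$, Potter's bounds give $k^{1-\vartheta}L(k)\ge k^{(1-\vartheta)/2}$ eventually, so this quantity tends to infinity.

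\emph{Second condition.} The bound $|r''(\ell)|\le C\ell^{-\vartheta-1}L(\ell)$ gives $|r''(\ell_k^*)|=O(k^{-\vartheta-1}L(k))$. Therefore
\[
\sqrt k\,|r''(\ell_k^*)|=O\bigl(k^{-1/2}\cdot k^{-\vartheta}L(k)\bigr)=O\bigl(k^{-1/2}|r'(\ell_k^*)|\bigr)=o\bigl(|r'(\ell_k^*)|\bigr).
\]

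\emph{Third condition (local uniformity).} Take $|\ell-\ell_k^*|\le\delta\sqrt{v_k}\asymp\sqrt k$. Then $\ell/\ell_k^*\to1$ uniformly on this window. By uniform convergence for the regularly varying envelope $\ell^{-\vartheta-1}L(\ell)$,
\[
\sup_{|\ell-\ell_k^*|\le\delta\sqrt{v_k}}|r''(\ell)|=O\bigl(k^{-\vartheta-1}L(k)\bigr).
\]
This is the main subtlety. The definition only gives an upper bound $O(\ell^{-\vartheta-1}L(\ell))$, not a two-sided estimate, so the supremum cannot literally be compared with $|r''(\ell_k^*)|$, which might be much smaller or even zero.

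I would handle this by reading the third condition with $|r''(\ell_k^*)|$ replaced by the envelope $k^{-\vartheta-1}L(k)$. This is all the proof of Theorem~\ref{thm:full-clt} uses: the Taylor remainder over the window is bounded by $\sup|r''|\cdot v_k\asymp k^{-\vartheta}L(k)$. This is $o(|r'(\ell_k^*)|\sqrt{v_k})\asymp o(k^{1/2-\vartheta}L(k))$, and it is also negligible on the scale needed. Alternatively, one can note that if $|r''(\ell_k^*)|$ is of smaller order, the supremum bound still yields the required remainder control. I would state this reading explicitly.

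\emph{Conclusion.} With (R2) verified and (R1) assumed, note that a one-dimensional type with $\lambda$ strictly increasing gives $\zeta\equiv0$. Corollary~\ref{cor:full-clt-1d} then supplies (R3), and the conditions of Theorem~\ref{thm:marginal-clt-I} are inherited. Theorem~\ref{thm:full-clt} yields the stated normal limit.
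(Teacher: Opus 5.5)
Your first two steps are the paper's proof almost verbatim: $L(\ell_k^*)\sim L(k)$ by uniform convergence, Potter bounds for $k^{1-\vartheta}L(k)\to\infty$, and $\sqrt k\,|r''(\ell_k^*)|/|r'(\ell_k^*)|=O(k^{-1/2})$. You diverge at the local-comparability clause, and there your version is the sound one.

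The paper handles that clause by writing $|r''(\ell)|/|r''(\ell_k^*)|=(\ell/\ell_k^*)^{-\vartheta-1}L(\ell)/L(\ell_k^*)\to1$ uniformly. This silently upgrades the one-sided bound $r''=O(\ell^{-\vartheta-1}L(\ell))$ of Definition~\ref{def:assortativity} to exact regular variation of $r''$ with the same $L$. Your objection is correct. The definition admits, e.g., $r'(\ell)=\ell^{-\vartheta}(1+\ell^{-1}\cos\ell)$, for which $\ell^{\vartheta+1}|r''(\ell)|=|\vartheta+\sin\ell|+O(\ell^{-1})$ swings between nearly $0$ and $1+\vartheta$ on every interval of length $2\pi$. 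Along $\ell_k^*=ck$ with $c/(2\pi)$ irrational, the supremum over a window of width $\asymp\sqrt k$ is therefore not $O(|r''(\ell_k^*)|)$. The paper's step needs an extra hypothesis such as regular variation of $r''$ itself, e.g.\ $\vartheta>0$ and $\ell L'(\ell)/L(\ell)\to0$, which gives $r''\sim-\vartheta\ell^{-\vartheta-1}L$. Under that hypothesis it delivers (R2) exactly as written.

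Your envelope route needs no such hypothesis. State plainly, though, that it proves the displayed CLT rather than (R2) as written. You verify a modified clause with $|r''(\ell_k^*)|$ replaced by $k^{-\vartheta-1}L(k)$. You then use that the proof of Theorem~\ref{thm:full-clt} only needs $\sup|r''|\,v_k=o(|r'(\ell_k^*)|\sqrt{v_k})$, which you obtain with margin $k^{-1/2}$.

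The envelope also buys something the paper's route does not. The uniform convergence theorem gives the same bound on the much wider window $|\ell-\ell_k^*|\le\ell_k^*/2$, whose posterior mass tends to one. By contrast, the fixed window $|\ell-\ell_k^*|\le\delta\sqrt{v_k}$ has posterior mass tending to $2\Phi(\delta)-1$ under (R1), not to one as the proof of Theorem~\ref{thm:full-clt} asserts. Use the wide window in your remainder bound.

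Finally, your concluding step needs $\lambda$ strictly increasing (for Corollary~\ref{cor:full-clt-1d}) and the standing hypotheses of Theorem~\ref{thm:marginal-clt-I}. These are not "inherited" from anything in the statement. They must be assumed, in your argument as in the paper's.
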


Degree-assortative kernels can realize these profiles. For example, $r(\ell)=\rho_0+\beta\ell^{1-\vartheta}$ has index $\vartheta\in[0,1)$, and $\vartheta=0$ gives proportional growth of neighbor and root expected degrees. By contrast, finite-rank and degree-corrected block kernels with bounded $\rho$ fall outside (R2) when their affinity gradients vanish too quickly. In that regime, Theorem~\ref{thm:clt-rank-one} supplies the relevant neighbor-averaging scale.

The rank-one case is the main calibration used in our tests, and it includes the Chung-Lu model.

\begin{theorem}[CLT in the rank-one case]
	\label{thm:clt-rank-one}
	Suppose the symmetric kernel is rank-one: $\kappa(x,y)=\varphi(x)\varphi(y)/\int\varphi\,\mathrm{d}\mu$ for measurable $\varphi:S\to(0,\infty)$. Then $\rho(x)\equiv\rho$ is constant across all types, Term~II in Eq.\eqref{eq:decomp} is identically zero, and for every $t\in\mathbb{R}$,
	\[
	\mathbb{P}\!\left[\sqrt{k}\,(\bar{D}_k-(1+\rho))\le t\mid D_\varnothing=k\right]
	\;\longrightarrow\;\Phi\!\left(\frac{t}{\tilde\sigma_\infty}\right)
	\quad\text{as }k\to\infty.
	\]
	For the Chung-Lu kernel $\kappa(x,y)=w(x)w(y)/\mathbb{E}[W]$,
	\[
	\rho=\frac{\mathbb{E}[W^2]}{\mathbb{E}[W]},
	\qquad
	\tilde\sigma^2_\infty
	=\frac{\mathbb{E}[W^2]}{\mathbb{E}[W]}+\frac{\mathbb{E}[W^3]}{\mathbb{E}[W]}
	-\left(\frac{\mathbb{E}[W^2]}{\mathbb{E}[W]}\right)^{\!2},
	\]
	provided $\mathbb{E}[W^3]<\infty$.
\end{theorem}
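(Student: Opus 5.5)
The plan is to reduce the statement to Theorem~\ref{thm:marginal-clt-I}. Term~II will vanish identically, and conditions (V1)--(V2) will reduce to the constancy of $\sigma^2$. The first step is to compute $\lambda$ and $Q_x$ for the rank-one kernel. Write $c:=\int\varphi\,\mathrm d\mu$. Then
\[
\lambda(x)=\varphi(x)\int\varphi\,\mathrm d\mu/c=\varphi(x),
\qquad
Q_x(\mathrm dy)=\frac{\varphi(x)\varphi(y)\mu(\mathrm dy)}{c\,\varphi(x)}=\frac{\varphi(y)\mu(\mathrm dy)}{c}.
\]
So $Q_x=:Q$ is the size-biased type law and does not depend on $x$. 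Consequently $\rho(x)=\int\lambda\,\mathrm dQ=\int\varphi^2\,\mathrm d\mu/c$ is constant, and so is $\sigma^2(x)=\rho+\mathrm{Var}_Q(\varphi(Y))$. Since $\rho(X_\varnothing)\equiv\rho$, we have $\bar\rho_k=\rho$, and Term~II in \eqref{eq:decomp} is identically zero.

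For the CLT, I would note that conditional on $(X_\varnothing=x,D_\varnothing=k)$ the law of $\sqrt k(\bar D_k-1-\rho)$ is that of a normalized sum of $k$ i.i.d.\ variables. Each summand has law $1+\mathrm{Poisson}(\varphi(Y))$ with $Y\sim Q$, and this law does not depend on $x$. Hence the conditional law given $D_\varnothing=k$, obtained by mixing over the posterior of $X_\varnothing$, is that same law. The ordinary Lindeberg--L\'evy CLT for i.i.d.\ summands then gives the limit $\Phi(t/\sigma)$ with $\tilde\sigma_\infty^2=\sigma^2$. This route needs only $\sigma^2<\infty$, and in particular no posterior concentration. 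Equivalently, (V1) holds because $\sigma^2\ge\rho>0$, (V2) is trivial, and one can simply cite Theorem~\ref{thm:marginal-clt-I}. One point must be checked: the Galton--Watson facts \eqref{eq:neighbor-type}--\eqref{eq:offspring} are stated given $D_\varnothing=k$. For a rank-one kernel the neighbor types are independent of the root's offspring count even after integrating over $X_\varnothing$, because $Q$ is free of $x$. So the mixing step is legitimate.

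The Chung--Lu formulas follow by taking $\varphi=w$ and $W=w(X)$ with $X\sim\mu$. This gives $c=\mathbb E[W]$ and $\rho=\mathbb E[W^2]/\mathbb E[W]$. For the variance term, $\mathbb E_Q[\varphi(Y)^2]=\mathbb E[W^3]/\mathbb E[W]$ and $\mathbb E_Q[\varphi(Y)]=\rho$. Therefore
\[
\tilde\sigma^2_\infty=\frac{\mathbb E[W^2]}{\mathbb E[W]}+\frac{\mathbb E[W^3]}{\mathbb E[W]}-\Bigl(\frac{\mathbb E[W^2]}{\mathbb E[W]}\Bigr)^2 .
\]
The assumption $\mathbb E[W^3]<\infty$ is exactly what makes $\sigma^2$ finite.

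I expect the main obstacle to be technical rather than conceptual. The Chung--Lu kernel here has no truncation $\min\{\cdot,1\}$ in the limit, so no issue arises from it. The delicate part is justifying that conditioning on $D_\varnothing=k$ leaves the neighbor-degree law unchanged, which the $x$-independence of $Q$ handles. The stated $(2+\eta)$ moment condition of Theorem~\ref{thm:clt-irg-conditional} is not needed, because the summands are genuinely i.i.d.\ with a fixed law, so finite variance, guaranteed by $\mathbb E[W^3]<\infty$, suffices.
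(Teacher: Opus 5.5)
Your proposal is correct. Its first and last steps coincide with the paper's proof. These are showing that $Q_x(\mathrm dy)=\varphi(y)\mu(\mathrm dy)/\int\varphi\,\mathrm d\mu$ is free of $x$, so that $\rho$ and $\sigma^2$ are constant and Term~II $\equiv 0$, and the Chung--Lu moment computation.

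The CLT step is handled differently. The paper says only that the CLT ``reduces to Theorem~\ref{thm:marginal-clt-I}''. That route goes through a Berry--Ess\'een bound uniform in $x$ together with (V1)--(V2). You instead observe that the joint law of $(D(1),\dots,D(k))$ given $(X_\varnothing=x,D_\varnothing=k)$ does not depend on $x$, so mixing over the posterior of the root type changes nothing. Under $\mathbb P(\cdot\mid D_\varnothing=k)$ the neighbor degrees are then exactly i.i.d.\ with a fixed law, and Lindeberg--L\'evy applies directly.

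Your route is more elementary and fits the stated hypothesis more tightly. Theorem~\ref{thm:marginal-clt-I} assumes the uniform $(2+\eta)$-moment bound of Theorem~\ref{thm:clt-irg-conditional}. For Chung--Lu, $D(i)-1$ is Poisson with a size-biased random mean, so that bound amounts to $\mathbb E[W^{3+\eta}]<\infty$, which is strictly stronger than the proviso $\mathbb E[W^3]<\infty$. Your argument needs only $\sigma^2<\infty$, which for Chung--Lu is exactly $\mathbb E[W^3]<\infty$, plus the nondegeneracy $\sigma^2\ge\rho>0$.

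In exchange for the extra moment, the paper's reduction yields a convergence rate $O(k^{-\eta/2})$, since (V2) is trivial here. It also treats the rank-one case within the same framework as the non-rank-one theory. For the limit statement as written, your direct argument is sufficient and cleaner.
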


Theorem~\ref{thm:clt-rank-one} is the calibration result behind the goodness-of-fit tests in Section~\ref{sec:methods}. Under the Chung-Lu null, $\bar D_k$ is asymptotically $N(1+\rho,\,\tilde\sigma_\infty^2/k)$, with mean and variance independent of $k$ beyond the $1/\sqrt{k}$ scale factor. The mean $\rho=\mathbb{E}[W^2]/\mathbb{E}[W]$ is the size-biased mean weight. A randomly chosen neighbor is sampled proportional to its weight, so its expected weight is $\mathbb{E}[W^2]/\mathbb{E}[W]$. Both $\rho$ and $\tilde\sigma^2_\infty$ can be estimated consistently from the empirical degree sequence, as detailed in Section~\ref{sec:gof-notation}. All proofs for this subsection are collected in Section~\ref{subsec:proof-irg}.

\subsection{Logarithmic growth of the neighbor-degree sum in PA}
\label{subsec:pa-results}

Previous work derives nearest-neighbor degree correlations for growing and generalized PA models \citep{barrat2005rate,krot2017assortativity}. We instead use the P\'{o}lya point tree to obtain the conditional high-degree limit and its coefficient, which are needed for calibration. Set $\tau=3+\delta/m$ and condition on $D_\varnothing=k>m$. The root then has two classes of neighbors:
\begin{itemize}
	\item \emph{Old neighbors} (labels $1,\ldots,m$): the $m$ vertices to which the root attached at the moment it entered the network. They arrived before the root and have had additional time to accumulate their own connections.
	\item \emph{Young neighbors} (labels $m+1,\ldots,k$): the $k-m$ vertices that attached to the root after it was born, selecting it via preferential attachment. They are younger than the root.
\end{itemize}
Encode the root's age by $A_\varnothing\sim\mathrm{Uniform}(0,1)$ and set $U_\varnothing :=A_\varnothing^{1/(\tau-1)}$, where smaller ages indicate earlier arrival. The P\'olya point tree gives the degrees of old neighbor $i\in\{1,\ldots,m\}$ and young neighbor $j\in\{1,\ldots,k-m\}$ as follows \cite[Chap.~5]{van2024random}.
\begin{align}
    D(i) &= 1+m+\mathrm{Poisson}\!\left(\Gamma^O_i(B_i^{-1}-1)\right), \label{eq:deg-O}\\
    D(m+j) &= m+\mathrm{Poisson}\!\left(\Gamma^Y_j(C_j^{-1}-1)\right), \label{eq:deg-Y}
\end{align}
where $\Gamma_i^O\sim\mathrm{Gamma}(m+\delta+1,1)$, $\Gamma_j^Y\sim\mathrm{Gamma}(m+\delta,1)$, $B_i=U_{\varnothing i}^{1/(\tau-2)}U_\varnothing$ with $U_{\varnothing i}\sim\mathrm{Uniform}(0,1)$ independent of $U_\varnothing$, and $C_j\mid(D_\varnothing=k,U_\varnothing=u)\sim\mathrm{Uniform}(u,1)$, with $C_1,\ldots,C_{k-m}$ conditionally independent given $(D_\varnothing=k,U_\varnothing=u)$:
\begin{equation}\label{eq:Cj-dist}
    P(C_j\le c\mid D_\varnothing=k,U_\varnothing=u)=\frac{c-u}{1-u},\quad c\in[u,1].
\end{equation}
The Poisson terms in Eq.\eqref{eq:deg-O}--\eqref{eq:deg-Y} count connections acquired after each neighbor's birth. Earlier neighbors have more time to acquire such connections.

The central object in the PA calculation is the posterior distribution of the root's age given its degree.

\begin{lemma}[Posterior age of the root]
\label{lem:posterior-U}
For $k>m$, the transformed age $U_\varnothing$ satisfies, under $\mathbb{P}(\,\cdot\mid D_\varnothing=k)$,
\[
  U_\varnothing\mid(D_\varnothing=k)\;\sim\;\mathrm{Beta}(\alpha,\beta_k),
\]
where $\alpha:=m+2+\delta+\delta/m$ and $\beta_k:=k-m+1$.
\end{lemma}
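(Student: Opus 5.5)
The plan is to compute the posterior directly by Bayes' rule, using the root description in the P\'olya point tree \citep[Chap.~5]{van2024random}. Two ingredients are needed: the prior density of $U_\varnothing$ and the conditional law of $D_\varnothing$ given $U_\varnothing=u$. The claim is that the likelihood is a beta-type kernel in $u$, and multiplying it by the prior gives the stated Beta law.

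\emph{Prior.} Since $A_\varnothing\sim\mathrm{Uniform}(0,1)$ and $U_\varnothing=A_\varnothing^{1/(\tau-1)}$, we have $\mathbb{P}(U_\varnothing\le u)=u^{\tau-1}$. The prior density is therefore $(\tau-1)u^{\tau-2}$ on $(0,1)$, with $\tau-2=1+\delta/m$.

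\emph{Likelihood.} In the P\'olya point tree the root has exactly $m$ old neighbors. Its young neighbors arrive as a mixed Poisson process on $(u,1)$ in the $C$-coordinate, with intensity $\Gamma_\varnothing/u$ and total mass $\Gamma_\varnothing(u^{-1}-1)$. This intensity is consistent with \eqref{eq:Cj-dist}, where the young neighbors' positions are i.i.d.\ uniform on $(u,1)$ given their number. For the root, which is not size-biased, the strength is $\Gamma_\varnothing\sim\mathrm{Gamma}(m+\delta,1)$, independent of $U_\varnothing$. This is the same shape as $\Gamma_j^Y$ in \eqref{eq:deg-Y}, and one shape lower than the size-biased old neighbors in \eqref{eq:deg-O}.

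Hence $D_\varnothing-m$ given $U_\varnothing=u$ is a Gamma-mixed Poisson, i.e.\ negative binomial. Integrating out $\Gamma_\varnothing$ gives
\[
\mathbb{P}(D_\varnothing=k\mid U_\varnothing=u)=\frac{\Gamma(k+\delta)}{\Gamma(m+\delta)\,(k-m)!}\int_0^\infty e^{-\gamma(u^{-1}-1)}\frac{(\gamma(u^{-1}-1))^{k-m}}{1}\,\frac{\gamma^{m+\delta-1}e^{-\gamma}}{\Gamma(k+\delta)}\,\mathrm{d}\gamma .
\]
Evaluating the Gamma integral, with $1+(u^{-1}-1)=u^{-1}$, yields
\[
\mathbb{P}(D_\varnothing=k\mid U_\varnothing=u)=\frac{\Gamma(k+\delta)}{\Gamma(m+\delta)\,(k-m)!}\,u^{m+\delta}(1-u)^{k-m}.
\]

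\emph{Posterior.} Multiplying the likelihood by the prior gives a posterior density proportional to
\[
u^{m+\delta+1+\delta/m}(1-u)^{k-m}
=u^{\alpha-1}(1-u)^{\beta_k-1},
\]
with $\alpha=m+2+\delta+\delta/m$ and $\beta_k=k-m+1$. Normalizing identifies this as $\mathrm{Beta}(\alpha,\beta_k)$. The exponents are positive because $\delta>-m$ gives $\alpha=(m+\delta)(1+1/m)+1>1$, and $k>m$ gives $\beta_k>1$.

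\emph{Main obstacle.} The delicate step is fixing the correct root description rather than the algebra. Three points must be checked against the cited construction: the root strength has shape $m+\delta$ and not $m+\delta+1$; the young-neighbor Poisson mean is exactly $\Gamma_\varnothing(U_\varnothing^{-1}-1)$ in the $U$-parametrization with exponent $1/(\tau-1)$; and the $m$ old neighbors are deterministic in number. I would verify these by matching against \eqref{eq:deg-Y} and \eqref{eq:Cj-dist}, which encode the same intensity for non-root vertices. As a consistency check, the marginal $\mathbb{P}(D_\varnothing=k)\asymp k^{-\tau}$ obtained by integrating $u^{\alpha-1}(1-u)^{\beta_k-1}$ reproduces the PA degree power law. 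This confirms the parametrization.
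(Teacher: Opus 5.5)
Your proof is correct and follows the same route as the paper. Both apply Bayes' rule, multiplying the prior density $(\tau-1)u^{\tau-2}$ (induced by $A_\varnothing\sim\mathrm{Uniform}(0,1)$) by the likelihood $u^{m+\delta}(1-u)^{k-m}\Gamma(k+\delta)/\{(k-m)!\,\Gamma(m+\delta)\}$ to obtain the kernel $u^{\alpha-1}(1-u)^{\beta_k-1}$. The only difference is that you derive this likelihood as a Gamma-mixed Poisson from the root's $\mathrm{Gamma}(m+\delta,1)$ strength and young-neighbor intensity, whereas the paper quotes it directly from van der Hofstad (p.~199).
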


The posterior mean $\mathbb{E}[U_\varnothing\mid D_\varnothing=k]=\alpha/(\alpha+\beta_k)$ tends to zero. Hence a high-degree root is asymptotically old, and $-\log U_\varnothing\sim\log k$ in probability. This age effect produces the $k\log k$ neighbor-degree sum.

The old and young neighbors contribute on different scales. Proposition~\ref{prop:EO} shows that old neighbors contribute only $O(k)$ in total, one order smaller than the dominant $k\log k$ term.

\begin{proposition}[Expected degree from old neighbors]
\label{prop:EO}
Assume $\delta>0$. For $k>m$,
\[
  \mathbb{E}\!\left[\sum_{i=1}^{m}D(i)\mid D_\varnothing=k\right]
  = -m\delta+m(m+\delta+1)\cdot\frac{m+\delta}{\delta}
          \cdot\frac{k+\delta+\delta/m+2}{m+\delta+\delta/m+1}.
\]
In particular, $\mathbb{E}[\sum_{i=1}^m D(i)\mid D_\varnothing=k]=O(k)$ as $k\to\infty$.
\end{proposition}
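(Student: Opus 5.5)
The plan is to compute the conditional expectation directly from the representation \eqref{eq:deg-O}. We take conditional expectations inside the Poisson term and then factor the resulting product using independence. By \eqref{eq:deg-O} and the tower property, for each old neighbor $i\in\{1,\dots,m\}$,
\[
\mathbb{E}[D(i)\mid D_\varnothing=k]=1+m+\mathbb{E}\bigl[\Gamma_i^O\,(B_i^{-1}-1)\mid D_\varnothing=k\bigr].
\]
We use three facts about the P\'olya point tree \cite[Chap.~5]{van2024random}. First, $\Gamma_i^O$ and $U_{\varnothing i}$ are independent of each other and of $(U_\varnothing,D_\varnothing)$. Second, the root degree $D_\varnothing$ depends only on the root's own Gamma variable and on $U_\varnothing$. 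Third, $B_i^{-1}=U_{\varnothing i}^{-1/(\tau-2)}U_\varnothing^{-1}$. These give the factorization
\[
\mathbb{E}[\Gamma_i^O(B_i^{-1}-1)\mid D_\varnothing=k]=(m+\delta+1)\Bigl(\mathbb{E}\bigl[U_{\varnothing i}^{-1/(\tau-2)}\bigr]\,\mathbb{E}[U_\varnothing^{-1}\mid D_\varnothing=k]-1\Bigr).
\]
The independence claim is the point I expect to need the most care. The conditioning is on the root degree, and I must check that this does not bias the old-neighbor marks. In the P\'olya point tree the old neighbors' positions are generated from $U_\varnothing$ and fresh uniforms. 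The root's younger children are generated from the root's own Gamma strength and $U_\varnothing$. Hence, given $U_\varnothing$, the event $\{D_\varnothing=k\}$ is conditionally independent of $(\Gamma_i^O,U_{\varnothing i})$, and the factorization above is justified.

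Next we evaluate the two expectations. Since $\tau-2=1+\delta/m$, we have $1/(\tau-2)=m/(m+\delta)$. This exponent is strictly less than $1$ precisely because $\delta>0$, which is where that assumption enters. Therefore
\[
\mathbb{E}\bigl[U_{\varnothing i}^{-m/(m+\delta)}\bigr]=\int_0^1 s^{-m/(m+\delta)}\,\mathrm{d}s=\frac{m+\delta}{\delta}.
\]
For the root, Lemma~\ref{lem:posterior-U} gives $U_\varnothing\mid D_\varnothing=k\sim\mathrm{Beta}(\alpha,\beta_k)$. The standard Beta identity $\mathbb{E}[U^{-1}]=(\alpha+\beta_k-1)/(\alpha-1)$ holds because $\alpha>1$. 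Substituting $\alpha=m+2+\delta+\delta/m$ and $\beta_k=k-m+1$ yields
\[
\mathbb{E}[U_\varnothing^{-1}\mid D_\varnothing=k]=\frac{k+\delta+\delta/m+2}{m+\delta+\delta/m+1}.
\]

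Finally we sum over the $m$ old neighbors, which all have the same conditional mean. This gives
\[
m(1+m)-m(m+\delta+1)+m(m+\delta+1)\cdot\frac{m+\delta}{\delta}\cdot\frac{k+\delta+\delta/m+2}{m+\delta+\delta/m+1}.
\]
The first two terms combine to $-m\delta$, which is exactly the stated formula. The displayed expression is affine in $k$, so the $O(k)$ conclusion is immediate.
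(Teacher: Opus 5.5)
Your proposal is correct and follows essentially the same route as the paper. You apply the tower property to \eqref{eq:deg-O}, factor using $\mathbb{E}[\Gamma_i^O]=m+\delta+1$ and $\mathbb{E}[U_{\varnothing i}^{-1/(\tau-2)}]=(m+\delta)/\delta$, and integrate $U_\varnothing^{-1}$ against the $\mathrm{Beta}(\alpha,\beta_k)$ posterior of Lemma~\ref{lem:posterior-U}. The only difference is cosmetic: the paper conditions on $U_\varnothing=u$ first and then integrates, whereas you factor directly given $D_\varnothing=k$ after justifying the conditional independence of $(\Gamma_i^O,U_{\varnothing i})$ from $(U_\varnothing,D_\varnothing)$, a step the paper leaves implicit.
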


Thus old neighbors contribute at most a bounded amount per neighbor to $\bar D_k$, similar to the IRG case. The dominant term comes from the young neighbors, whose expected degrees involve the digamma function through the inverse-age integral. Specifically, each young neighbor $j$ has conditional mean degree
\[
  \mathbb{E}[D(m+j)\mid D_\varnothing=k,U_\varnothing=u]
  = m+(m+\delta)\!\left(\frac{-\log u}{1-u}-1\right),
\]
and integrating over the Beta posterior of $U_\varnothing$ produces a digamma expression.

\begin{proposition}[Expected degree from young neighbors]
\label{prop:EY}
For $k>m$,
\begin{align}
  \mathbb{E}\!\left[\sum_{j=1}^{k-m}D(m+j)\mid D_\varnothing=k\right]
  &=(m+\delta)(k-m) \notag\\
  &\quad\times
    \left\{\frac{k+\delta+\frac{\delta}{m}+2}{k-m}
    \Bigl[\psi\!\left(k+\delta+\tfrac{\delta}{m}+2\right)
         -\psi\!\left(m+\delta+\tfrac{\delta}{m}+2\right)\Bigr]
    -1\right\},
\end{align}
where $\psi=\Gamma'/\Gamma$ is the digamma function.
\end{proposition}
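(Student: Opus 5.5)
Conditioning on $U_\varnothing=u$ and then integrating against the Beta posterior of Lemma~\ref{lem:posterior-U} gives the result directly. The first step establishes the displayed conditional mean for a single young neighbor. By Eq.~\eqref{eq:deg-Y} and the independence of $\Gamma^Y_j\sim\mathrm{Gamma}(m+\delta,1)$ from $C_j$,
\[
\mathbb{E}[D(m+j)\mid D_\varnothing=k,U_\varnothing=u]=m+(m+\delta)\,\mathbb{E}[C_j^{-1}-1\mid D_\varnothing=k,U_\varnothing=u].
\]
Since $C_j\sim\mathrm{Uniform}(u,1)$ by Eq.~\eqref{eq:Cj-dist}, we have $\mathbb{E}[C_j^{-1}]=\int_u^1 c^{-1}\,\mathrm{d}c/(1-u)=-\log u/(1-u)$, which gives $m+(m+\delta)\bigl(\tfrac{-\log u}{1-u}-1\bigr)$. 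This does not depend on $j$, so summing over the $k-m$ young neighbors multiplies it by $k-m$. The constant parts contribute $(k-m)m-(k-m)(m+\delta)$, and the remaining work is the single integral $\mathbb{E}\bigl[\tfrac{-\log U}{1-U}\bigr]$ with $U\sim\mathrm{Beta}(\alpha,\beta_k)$, $\alpha=m+2+\delta+\delta/m$, $\beta_k=k-m+1$.

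The second step evaluates that integral. Write $\alpha+\beta_k=k+\delta+\delta/m+3$. The factor $(1-u)^{-1}$ cancels one power of $(1-u)^{\beta_k-1}$:
\[
\mathbb{E}\Bigl[\tfrac{-\log U}{1-U}\Bigr]=\frac{B(\alpha,\beta_k-1)}{B(\alpha,\beta_k)}\,\mathbb{E}[-\log \tilde U],\qquad \tilde U\sim\mathrm{Beta}(\alpha,\beta_k-1).
\]
This is legitimate because $\beta_k-1=k-m\ge1$ for $k>m$. The ratio of Beta functions is $(\alpha+\beta_k-1)/(\beta_k-1)=(k+\delta+\delta/m+2)/(k-m)$. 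The standard identity $\mathbb{E}[-\log\tilde U]=\psi(\alpha+\beta_k-1)-\psi(\alpha)$ then gives $\psi(k+\delta+\delta/m+2)-\psi(m+\delta+\delta/m+2)$.

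The third step assembles the pieces:
\[
(k-m)\Bigl[m+(m+\delta)\Bigl(\tfrac{k+\delta+\delta/m+2}{k-m}\bigl[\psi(\cdot)-\psi(\cdot)\bigr]-1\Bigr)\Bigr].
\]
There is one catch. This has an extra $m(k-m)$ compared with the displayed formula, which is exactly $(m+\delta)(k-m)\{\cdots\}$ with no separate $m$ term. I would therefore recheck the convention in Eq.~\eqref{eq:deg-Y}, since the baseline $m$ there may already be absorbed. Either the stated identity is meant for the forward-degree part $D(m+j)-m$, or the constant in the displayed conditional mean must be reconciled. The proof will follow the paper's displayed conditional mean, tracking the constant $m$ explicitly, and if necessary note the discrepancy.

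The main obstacle is justifying the two conditioning facts in the Pólya point tree. The first is that, given $(D_\varnothing=k,U_\varnothing=u)$, each young neighbor's position $C_j$ is $\mathrm{Uniform}(u,1)$ and independent of its Gamma variable. This holds because the young children arrive as a mixed Poisson process on $(u,1)$ with intensity proportional to $1/c$ in the original age scale; after the transformation used in Eq.~\eqref{eq:Cj-dist}, that process is uniform. The second is applying Fubini to exchange the sum, the Poisson mean and the Beta integral. Fubini applies because all terms are nonnegative and $\mathbb{E}[-\log U/(1-U)]<\infty$ for $\beta_k\ge2$.
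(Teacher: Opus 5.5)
Your computation is correct and follows the paper's route. You condition on $U_\varnothing=u$, use $\mathbb{E}[C_j^{-1}\mid D_\varnothing=k,U_\varnothing=u]=-\log u/(1-u)$ from Eq.~\eqref{eq:Cj-dist}, and integrate against the $\mathrm{Beta}(\alpha,\beta_k)$ posterior by absorbing $(1-u)^{-1}$ into the Beta kernel, which is what Lemma~\ref{lem:digamma} does.

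The discrepancy you flag is real, and you should resolve your either/or rather than leave it open. The paper's own proof reaches your intermediate expression $(k-m)m+(m+\delta)\bigl((k-m)\mathbb{E}[\tfrac{-\log U_\varnothing}{1-U_\varnothing}\mid D_\varnothing=k]-(k-m)\bigr)$. It then drops the $(k-m)m$ when it claims the stated formula. The constant in Eq.~\eqref{eq:deg-Y} is not the problem: a young neighbor has its edge to the root plus $m-1$ older children of its own, so $D(m+j)\ge m$. The displayed identity therefore cannot hold for degrees. At $k=m+1$ its right-hand side equals $(m+\delta)/(m+\delta+\delta/m+2)<1\le m$.

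The correct statement is the displayed right-hand side plus $m(k-m)$. Equivalently, the displayed right-hand side equals $\mathbb{E}[\sum_{j=1}^{k-m}(D(m+j)-m)\mid D_\varnothing=k]$. This is also $\mathbb{E}[\sum_j\lambda_j\mid D_\varnothing=k]$ with $\lambda_j=\Gamma^Y_j(C_j^{-1}-1)$, which is how the proof of Theorem~\ref{thm:lln-pa} uses it. Since $m(k-m)=O(k)$, the $(m+\delta)k\log k$ asymptotics and the LLN are unaffected.

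One minor slip concerns your heuristic that young children arrive with intensity proportional to $1/c$ in the original age scale. That would not give a uniform law: an intensity $\propto 1/x$ becomes $\propto 1/u$ after $u=x^{1/(\tau-1)}$. The intensity in the $A$-scale is $\propto x^{1/(\tau-1)-1}$. You do not need this heuristic, because Eq.~\eqref{eq:Cj-dist} is part of the given model description.
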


The digamma difference satisfies $\psi(k+c_1)-\psi(c_2)=\log k+O(1)$ for fixed constants $c_1,c_2$ (Lemma~\ref{lem:digamma-asymp} in Section~\ref{sec:proofs}), so Proposition~\ref{prop:EY} gives
\[
  \mathbb{E}\!\left[\sum_{j=1}^{k-m}D(m+j)\mid D_\varnothing=k\right]\sim(m+\delta)\,k\log k.
\]
Thus young neighbors contribute order $k\log k$, while Proposition~\ref{prop:EO} bounds the old-neighbor contribution by order $k$. Their sum yields the following law of large numbers.

\begin{theorem}[LLN for neighbor degrees under PA]
\label{thm:lln-pa}
Under the P\'{o}lya point tree with $\delta>0$, for any $\varepsilon>0$,
\[
  \mathbb{P}\!\left[\left|\,\frac{1}{k\log k}\sum_{i=1}^{k}D(i)-(m+\delta)\,\right|>\varepsilon\mid D_\varnothing=k\right]
  \;\longrightarrow\; 0
  \quad\text{as }k\to\infty.
\]
\end{theorem}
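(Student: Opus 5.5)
Split $S_k:=\sum_{i=1}^k D(i)=S_k^O+S_k^Y$ into the old-neighbor part $S_k^O=\sum_{i=1}^m D(i)$ and the young-neighbor part $S_k^Y=\sum_{j=1}^{k-m}D(m+j)$. By Proposition~\ref{prop:EO}, $\mathbb{E}[S_k^O\mid D_\varnothing=k]=O(k)$, and $S_k^O\ge0$. Markov's inequality then gives $S_k^O/(k\log k)\to0$ in probability under $\mathbb{P}(\cdot\mid D_\varnothing=k)$. It therefore suffices to show $S_k^Y/(k\log k)\to m+\delta$ in probability.

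For the young part, condition on $U_\varnothing=u$. By Eq.~\eqref{eq:deg-Y} and \eqref{eq:Cj-dist}, the summands $D(m+j)$ are conditionally i.i.d. with mean
\[
g(u):=m+(m+\delta)\Bigl(\tfrac{-\log u}{1-u}-1\Bigr).
\]
I would show two things.

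\emph{(a) Concentration of the centering.} We need $g(U_\varnothing)/\log k\to m+\delta$ in probability. By Lemma~\ref{lem:posterior-U}, $U_\varnothing\sim\mathrm{Beta}(\alpha,k-m+1)$, so $kU_\varnothing$ converges in law to $\mathrm{Gamma}(\alpha,1)$, which is tight and bounded away from $0$ in probability. Hence $-\log U_\varnothing=\log k+O_P(1)$, and since $U_\varnothing\to0$ we get $g(U_\varnothing)=(m+\delta)\log k+O_P(1)$.

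\emph{(b) Averaging.} We need $S_k^Y/(k-m)-g(U_\varnothing)=o_P(\log k)$. Use Chebyshev conditionally on $U_\varnothing=u$. The conditional variance of one young-neighbor degree is $\mathbb{E}[\Gamma^Y(C^{-1}-1)]+\mathrm{Var}(\Gamma^Y(C^{-1}-1))$, which is bounded by a constant times $\mathbb{E}[C^{-2}\mid u]$. With $C\sim\mathrm{Uniform}(u,1)$,
\[
\mathbb{E}[C^{-2}\mid u]=\frac{u^{-1}-1}{1-u}=u^{-1}.
\]
So the conditional variance of $S_k^Y/(k-m)$ is $O\bigl(1/(ku)\bigr)$. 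On the event $\{U_\varnothing\ge \eta/k\}$, which has probability at least $1-\epsilon$ for small $\eta$ by (a), this is $O(1/\eta)$. Chebyshev then gives deviations of order $O_P(1)=o_P(\log k)$.

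Combining (a), (b), the old-part bound, and $(k-m)/k\to1$ yields the theorem.

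The main obstacle is step (b). Young-neighbor degrees are heavy-tailed, since $C^{-1}$ near $u\approx 1/k$ can be of order $k$, so their variance is not uniformly bounded. The fix is to work conditionally on $U_\varnothing$ and exploit that the variance bound scales like $1/(kU_\varnothing)$, which is tight exactly because the Beta posterior puts $U_\varnothing$ at scale $1/k$. If this second-moment route proved delicate (for example, the $\mathbb{E}[C^{-2}]$ computation), I would fall back on a truncated first-moment (weak LLN) argument. That argument truncates each summand at level $k\log k/M$ and uses that $\mathbb{E}[C^{-1}\mathbf 1\{C^{-1}>t\}\mid u]$ is small for $t\gg 1/u$.
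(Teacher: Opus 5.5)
Your proof is correct and follows the paper's skeleton: the old/young split, Markov's inequality with Proposition~\ref{prop:EO} for the old part, and conditioning on $U_\varnothing$ with the Beta posterior of Lemma~\ref{lem:posterior-U} for the centering. The difference lies in the averaging step.

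The paper treats two noise sources separately. It bounds the Poisson noise $\sum_j(N_j-\lambda_j)$ unconditionally, via $\mathbb{E}[\sum_j\lambda_j\mid D_\varnothing=k]$ from Proposition~\ref{prop:EY}. It handles the average of the intensities $\lambda_j=\Gamma_j^Y(C_j^{-1}-1)$ by the strong law at a fixed $u$, and then integrates over the posterior. You merge both into a single conditional Chebyshev bound using $\mathbb{E}[C^{-2}\mid u]=u^{-1}$, and then use the fact that the posterior places $U_\varnothing$ at scale $1/k$.

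Your route buys the uniformity in $u$ that a fixed-$u$ strong law does not supply. At $u\asymp 1/k$ each $\lambda_j$ has variance of order $k$. Moreover, the $O(1)$ young neighbors with $C_j\asymp 1/k$ each shift the average by order one. So the average of the $\lambda_j$ genuinely fluctuates at order one around its conditional mean $\mu(U_\varnothing)$. The correct target is therefore $o_P(\log k)$, which is exactly what you prove, rather than the $o_P(1)$ deviation displayed in the paper's intermediate step.

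You also justify $-\log U_\varnothing=\log k+O_P(1)$ properly via $kU_\varnothing\xrightarrow{d}\mathrm{Gamma}(\alpha,1)$. The fact that $U_\varnothing\to 0$ in probability would not give this on its own.

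One simplification is available. Since $\alpha>1$, the negative moment $\mathbb{E}[U_\varnothing^{-1}\mid D_\varnothing=k]=(\alpha+\beta_k-1)/(\alpha-1)=O(k)$, already used for Proposition~\ref{prop:EO}, shows that your conditional variance bound $O(1/(kU_\varnothing))$ has posterior mean $O(1)$. Hence neither the event $\{U_\varnothing\ge\eta/k\}$ nor the truncation fallback is needed.
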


Equivalently, $\bar D_k/\log k\xrightarrow{\mathbb P}m+\delta$. The coefficient combines the $m$ initial connections with the attractiveness offset $\delta$. Section~\ref{subsec:proof-pa} contains the proof and the required digamma calculations.

\subsection{The discriminating signature}
\label{subsec:contrast}

The preceding limits separate the two mechanisms at a single high-degree root.

\begin{remark}[Key contrast]
\label{rem:contrast}
Let $k=D_\varnothing\to\infty$.
\begin{itemize}
  \item \textbf{Under a rank-one IRG (Chung--Lu) null:}
        $\bar{D}_k \xrightarrow{\mathbb{P}} 1+\rho$, a finite constant that does not depend on $k$. The fluctuations around $1+\rho$ are of order $k^{-1/2}$ (Theorem~\ref{thm:clt-rank-one}).
  \item \textbf{Under PA with $\delta>0$:} $\bar{D}_k/\log k \xrightarrow{\mathbb{P}} m+\delta$, so
        $\bar{D}_k$ grows without bound at a logarithmic rate (Theorem~\ref{thm:lln-pa}).
\end{itemize}
In particular, for any fixed $C<\infty$, $\mathbb{P}(\bar{D}_k>C\mid D_\varnothing=k)\to 1$ under PA with $\delta>0$, whereas $\mathbb{P}(\bar{D}_k>C\mid D_\varnothing=k)\to 0$ under IRG for $C>1+\rho$.
The two mechanisms are therefore \emph{asymptotically distinguishable} from the conditional distribution of $\bar{D}_k$ alone and no additional network-level information is needed.
\end{remark}

This contrast yields a level check under a specified IRG null and a slope check for the positive $\log k$ trend predicted by PA.

\section{Testing Procedures}
\label{sec:methods}

We separate null adequacy from direction. Methods~1--3 test whether high-degree neighborhoods match the level and scale of a specified sparse-graph null. Method~4 then tests the residual slope after removing that null prediction. Positive, negative, and flat slopes correspond respectively to the PA direction, disassortative hub neighborhoods, and no remaining trend at this statistic.

Method~1 targets the maximum-degree class. Methods~2 and~3 aggregate the upper degree tail, with Bonferroni favoring robust familywise control and Simes favoring sensitivity to repeated moderate departures. Method~4 answers a different question and should be interpreted jointly with the level tests.

For a vertex $v$ with $D(v)=k>0$, recall that $\bar D_k^{(v)}=k^{-1}\sum_{u\in\mathcal N(v)}D(u)$.
For each distinct degree value $k$ present in the graph, define
\[
  \mathcal V(k)=\{v\in V:D(v)=k\},\quad
  n(k)=|\mathcal V(k)|,\quad
  T_k=\frac{1}{n(k)}\sum_{v\in\mathcal V(k)}\bar D_k^{(v)},
\]
the average of $\bar D_k^{(v)}$ across all degree-$k$ vertices. Let $\mathcal K$ denote the set of distinct positive degrees. The \emph{level-test high-degree set} is
\[
  \mathcal K^+_{\mathrm{level}}=\bigl\{k\in\mathcal K:k\geq k_{\mathrm{level}}(n)\bigr\},
  \qquad M_{\mathrm{level}}=|\mathcal K^+_{\mathrm{level}}|,
\]
where, unless stated otherwise, $k_{\mathrm{level}}(n)$ is the
$\lceil 0.05\,|\mathcal K|\rceil$-th largest distinct degree, so
$\mathcal K^+_{\mathrm{level}}$ contains the top $5\%$ of distinct degree
values.  A vertex-percentile window is a different rule: it chooses
$k_{\mathrm{level}}(n)$ from the empirical distribution of the positive
vertex degrees and then retains every distinct class above that threshold.
We use that rule only for the Reddit application, where it matches the
original analysis, and identify it explicitly there.

\subsection{Null calibration for degree-class tests}
\label{sec:gof-notation}

Methods~1--3 test whether the class statistics $T_k$ follow a specified IRG null. Let $m_0(k)$ and $s_0(k)$ denote the null conditional mean and standard deviation, so that
$$
    \frac{(T_k-m_0(k))}{s_0(k)/\sqrt{n(k)}}\xrightarrow{d}N(0,1)
$$
under $H_0$.

For the rank-one Chung-Lu null, Theorem~\ref{thm:clt-rank-one} gives $m_0(k)=1+\rho$ and $s_0(k)=\tilde\sigma_\infty/\sqrt{k}$, where $\rho=\mathbb{E}[W^2]/\mathbb{E}[W]$. Thus
\[
  Z_k=\frac{T_k-(1+\rho)}{\tilde\sigma_\infty/\sqrt{k\,n(k)}}
\]
is asymptotically $N(0,1)$ with a centering and variance factor shared across degree classes. This rank-one invariance converts the composite null into a family of asymptotically pivotal statistics with only two scalar nuisance parameters. The level tests below differ only in how they aggregate this pivot family, trading off sensitivity to a single dominant class against evidence spread across the upper tail.

Because the latent weights $W$ are unobserved, the calibration uses degree-based moment estimates. In the finite simple Chung--Lu graph, conditional on the full weight sequence, $D_i$ is Poisson-binomial with mean $\lambda_i^{(n)}=\sum_{j\ne i}\min\{W_iW_j/L_n,1\}$, where $L_n=\sum_jW_j$. Under the sparse Chung--Lu condition $\max_i W_i^2/L_n=o_P(1)$, truncation at one and the exclusion of self-loops are asymptotically negligible, $\lambda_i^{(n)}=W_i\{1+o_P(1)\}$, and the conditional degree distribution is asymptotically $\mathrm{Poisson}(W_i)$. The resulting asymptotic Poisson moment relations give $\mathbb{E}[D^2]/\mathbb{E}[D]=\mathbb{E}[W^2]/\mathbb{E}[W]+1$ and $\mathbb{E}[D^3]/\mathbb{E}[D]=\mathbb{E}[W^3]/\mathbb{E}[W]+3\mathbb{E}[W^2]/\mathbb{E}[W]+1$.
Substituting into $\mu_0=1+\rho=1+\mathbb{E}[W^2]/\mathbb{E}[W]$ and $\tilde\sigma_\infty^2=\rho+\mathbb{E}[W^3]/\mathbb{E}[W]-\rho^2$ yields the corrected degree-based plug-ins:
\[
  \hat\mu_0
  = \frac{\sum_{i}D(i)^2}{\sum_{i}D(i)},
  \qquad
  \hat{\tilde\sigma}_\infty^2
  = \frac{\sum_{i}D(i)^3}{\sum_{i}D(i)}
  - \left(\frac{\sum_{i}D(i)^2}{\sum_{i}D(i)}\right)^{\!2},
\]
giving plug-in estimates $\hat m_0(k)=\hat\mu_0$ and $\hat s_0(k)=\hat{\tilde\sigma}_\infty/\sqrt{k}$. The naive formula $1+\sum D_i^2/\sum D_i$ overestimates $\mu_0$ by one unit and inflates $\hat{\tilde\sigma}_\infty^2$ by $\hat\mu_0-1$. Both errors cause the level tests to reject too often under the null.
For a non-rank-one IRG null, Theorem~\ref{thm:full-clt} gives the centering $m_0(k)=1+\bar\rho_k$ and scale $s_0(k)=|r'(\ell_k^*)|\sqrt{v_k}$, estimated by plug-in as $\hat m_0(k)=1+\hat{\bar\rho}_k$ and $\hat s_0(k)=|\hat r'(\hat\ell_k^*)|\sqrt{\hat v_k}$.

\subsection{Method 1: maximum-degree hub test}
\label{sec:method1}

Let $k^*=\max_{v\in V}D(v)$ be the maximum degree, and let $\mathcal V(k^*)$ and $n(k^*)$ be the set and count of vertices attaining it. Method~1 uses only the maximum-degree class, averaging the neighbor-degree statistic over all maximum-degree vertices:
\[
  T_1 = \frac{1}{n(k^*)}\sum_{v\in\mathcal V(k^*)}\bar D_{k^*}^{(v)}.
\]
The standardized statistic and two-sided $p$-value are
\[
  Z_1 = \frac{T_1-\hat m_0(k^*)}{\hat s_0(k^*)/\sqrt{n(k^*)}}
  \;\xrightarrow{d}\; N(0,1),
  \qquad
  p_1=2\{1-\Phi(|Z_1|)\}.
\]
The test rejects at level $\alpha$ when $|Z_1|>z_{1-\alpha/2}$. Method~1 is simple and directly interpretable, but it uses only the single most extreme degree class. It can therefore have limited power when departures from the null are spread across several high-degree classes.

\subsection{Method 2: Bonferroni test over high-degree classes}
\label{sec:method2}

For each $k\in\mathcal K^+_{\mathrm{level}}$, the standardized class statistic and two-sided $p$-value are
\[
  Z_k=\frac{T_k-\hat m_0(k)}{\hat s_0(k)/\sqrt{n(k)}},
  \qquad
  p_k=2\{1-\Phi(|Z_k|)\}.
\]
Method~2 rejects the global null if at least one class is significant after Bonferroni correction:
\[
  p_2=\min\!\left\{M_{\mathrm{level}}\min_{k\in\mathcal{K}^+_{\mathrm{level}}}p_k,\,1\right\},
\]
rejecting when $p_2\leq\alpha$. The Bonferroni correction controls the familywise Type~I error without requiring independence among the class statistics. Its cost is conservativeness when many classes are tested and the evidence is distributed rather than concentrated.

\subsection{Method 3: Simes global test over high-degree classes}
\label{sec:method3}

Method~3 uses the same class $p$-values $\{p_k:k\in\mathcal K^+_{\mathrm{level}}\}$ as Method~2, but combines them using the Simes global test \citep{simes1986improved}. Let $p_{(1)}\leq\cdots\leq p_{(M_{\mathrm{level}})}$ be the ordered $p$-values. The Simes $p$-value is
\[
  p_3=\min_{1\leq r\leq M_{\mathrm{level}}}\!\left\{\frac{M_{\mathrm{level}}}{r}\,p_{(r)}, 1\right\},
\]
rejecting when $p_3\leq\alpha$. The Simes test controls the global Type~I error exactly under independence of the class $p$-values \citep{simes1986improved}, and also under positive-dependence conditions such as PRDS \citep{sarkar1998probability,benjamini2001control}. Independence, however, does not hold here: the class statistics $\{Z_k\}_{k\in\mathcal K^+_{\mathrm{level}}}$ are all functionals of the same edge set, and hubs in distinct classes may be adjacent or share neighbors. We therefore do not use Simes as the sole analytic size guarantee. Method~2 supplies the dependence-robust Bonferroni benchmark, while Method~3 is reported as a less conservative pooling companion whose calibration is checked in the rank-one null simulation and, for the real-data cases where the analytic assumptions are doubtful, by the same fitted-null bootstrap used for the reported $p$-values.
Compared with Method~2, the Simes procedure is more sensitive when multiple high-degree classes each show moderate evidence against the null, making it useful for systematic departures spread across the upper tail.

\subsection{Method 4: log-degree slope test}
\label{sec:method4}

Methods~1--3 test whether high-degree neighbor means agree with the specified null level. Method~4 uses the same degree-class summaries for the question of whether there is a trend in $\log k$ after subtracting the null centering. Under PA with $\delta>0$, the local limit gives $\bar D_k\propto (m+\delta)\log k$, so the residual trend has positive slope.

Let $\mathcal K^+_{\mathrm{slope}} =\{k\in\mathcal K:k\geq k_{\mathrm{slope}}(n)\}$ be the slope-test window, chosen to contain enough distinct degrees to estimate a stable line while still focusing on the upper tail. 
For $k\in\mathcal K^+_{\mathrm{slope}}$, define
\[
  R_k=T_k-\hat m_0(k),\qquad x_k=\log k,\qquad
  w_k=\frac{n(k)}{\hat s_0(k)^2}.
\]
For the Chung-Lu null, $w_k=k\,n(k)/\hat{\tilde\sigma}_\infty^2$. Let
\[
  \bar x_w=\frac{\sum_{k\in\mathcal K^+_{\mathrm{slope}}}w_kx_k}
                 {\sum_{k\in\mathcal K^+_{\mathrm{slope}}}w_k},
  \qquad
  S_{xx}=\sum_{k\in\mathcal K^+_{\mathrm{slope}}}w_k(x_k-\bar x_w)^2.
\]
The weighted slope and standardized statistic are
\[
  \hat\beta
  =\frac{\sum_{k\in\mathcal K^+_{\mathrm{slope}}}
          w_k(x_k-\bar x_w)R_k}{S_{xx}},
  \qquad
  Z_4=\hat\beta\sqrt{S_{xx}}.
\]
Large positive values indicate the cumulative-advantage direction. The one-sided test rejects when $Z_4>z_{1-\alpha}$, and a two-sided version can be used when the direction of departure is not specified.

The size-control central limit theorem of Theorem~\ref{thm:method4} is established under the rank-one Chung--Lu null. The procedure extends formally to a specified non-rank-one IRG null by replacing $\hat m_0(k)$ and $\hat s_0(k)$ with the Theorem~\ref{thm:full-clt} centering $1+\bar\rho_k$ and scale, so that the residualization removes $1+\bar\rho_k$; because the analytic $N(0,1)$ null is proven only in the rank-one case, we then calibrate by the fitted-null conditional bootstrap of Remark~\ref{rem:method4-bootstrap}. The slope test uses a wider high-degree window than Methods~1--3 because a slope estimate needs leverage in $\log k$.
The lower edge of the window trades this leverage against restriction to the hub regime; there is no universally optimal choice, so we fix it a priori for each application and report it together with the number of slope classes. The simulations and the arXiv analysis use the upper $50\%$ of distinct positive degrees, whereas the contact and reply networks use the upper $25\%$; because $\hat\beta$ is inverse-variance weighted, the window affects the estimate only through which classes enter, and where a conclusion is borderline it should be checked against the alternative window.

\begin{theorem}[Method~4: size control and consistency]
\label{thm:method4}
Let $\mathcal{K}^+_{\mathrm{slope}}=\{k_1<\cdots<k_M\}$ be the set of high-degree classes with $M\to\infty$ and $S_{xx}\to\infty$ as $n\to\infty$.
\begin{enumerate}
  \item[(i)] \textup{(Size control.)}
  Work under the Chung--Lu null, conditionally on the weight sequence
  $\mathbf W=(W_1,\dots,W_n)$, so that the edges $\{A_{ij}\}_{i<j}$ are independent,
  and condition throughout on the realized hub degrees $\{D(v):D(v)\in\mathcal K^+_{\mathrm{slope}}\}$
  as elsewhere in the paper. Assume $\mathbb{E}[W^3]<\infty$ and:
  \begin{enumerate}
    \item[\rm(S1)] \emph{Plug-in consistency and design stabilization:}
      $\hat\mu_0\xrightarrow{\mathbb{P}}\mu_0$,
      $\hat{\tilde\sigma}_\infty^2\xrightarrow{\mathbb{P}}\tilde\sigma_\infty^2\in(0,\infty)$,
      and the empirical threshold $k_{\mathrm{slope}}(n)$, the class counts $\{n(k)\}$,
      and the design quantities $(\bar x_w,S_{xx})$ agree with their oracle counterparts
      (obtained by replacing $\hat{\tilde\sigma}_\infty^2$ with $\tilde\sigma_\infty^2$)
      up to a $1+o_{\mathbb P}(1)$ factor;
    \item[\rm(S2)] \emph{Per-class Berry--Ess\'een and no domination:} for each
      $k\in\mathcal K^+_{\mathrm{slope}}$,
      $\operatorname{Var}_{\mathbf W}(\bar D_k^{(v)})=\tilde\sigma_\infty^2/k\,(1+o(1))$ and
      $\mathbb E_{\mathbf W}|T_k-\mu_0|^3=O\{(k\,n(k))^{-3/2}\}$, and the weights obey the
      Lyapunov no-domination condition
      $\max_{k\in\mathcal K^+_{\mathrm{slope}}}k\,n(k)(\log k)^2=o(S_{xx})$;
    \item[\rm(S3)] \emph{Sparsity, tail window, and log-degree spread:}
      $\varrho_n:=\max_i W_i^2/L_n\to0$; the maximum degree satisfies $k_{\max}^3=o(n)$,
      where $k_{\max}:=\max_vD(v)$; the slope window is a genuine upper tail,
      $k_{\mathrm{slope}}(n)\to\infty$; and the design is regular,
      $\sum_{k\in\mathcal K^+_{\mathrm{slope}}}k\,n(k)(\log k)^2=O(S_{xx})$ and
      $|\mathcal H|^2=o(nS_{xx})$ with $\mathcal H=\{v:D(v)\in\mathcal K^+_{\mathrm{slope}}\}$.
      When $\mathbb E[W^3]<\infty$ and the degree support is unbounded these hold
      automatically, since then $k_{\max}=o_{\mathbb P}(n^{1/3})$ and
      $\varrho_n=o_{\mathbb P}(n^{-1/3})$.
  \end{enumerate}
  Then, by Lemma~\ref{lem:crosscov}, $\operatorname{Var}_{\mathbf W}(Z_4)=1+o_{\mathbb P}(1)$ and
  \[
    Z_4 \;\xrightarrow{d}\; N(0,1),
  \]
  both conditionally on $\mathbf W$ and, by bounded convergence over $\mathbf W$,
  unconditionally. The one-sided test $\{Z_4>z_{1-\alpha}\}$ therefore has asymptotic
  level~$\alpha$.

  \item[(ii)] \textup{(Consistency.)}
  Under $\mathrm{PA}_n^{(m,\delta)}$ with $\delta>0$,
  \[
    \frac{Z_4}{\sqrt{S_{xx}}} \;\xrightarrow{\mathbb{P}}\; m+\delta \;>\; 0,
  \]
  so $Z_4\xrightarrow{\mathbb{P}}+\infty$ and the power $\mathbb{P}_{\mathrm{PA}}(Z_4>z_{1-\alpha})\to 1$.
\end{enumerate}
\end{theorem}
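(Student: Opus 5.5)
For part (i), I will first replace every estimated quantity by its oracle counterpart. Under (S1), $\hat{\tilde\sigma}_\infty^2/\tilde\sigma_\infty^2\to1$ in probability, and $(\bar x_w,S_{xx})$ change only by $1+o_{\mathbb P}(1)$ factors. Weighted-regression algebra gives the identity
\[
Z_4=\frac{\sum_k w_k(x_k-\bar x_w)R_k}{\sqrt{S_{xx}}}.
\]
The centering error $\hat\mu_0-\mu_0$ is common to all classes. It is therefore annihilated by $\sum_k w_k(x_k-\bar x_w)=0$, so that $R_k$ may be replaced by $T_k-\mu_0$ exactly.

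Thus I work with the oracle linear statistic
\[
Z_4^{o}=S_{xx}^{-1/2}\sum_k a_k(T_k-\mu_0),\qquad a_k=\frac{k\,n(k)}{\tilde\sigma_\infty^2}(x_k-\bar x_w),
\]
conditionally on $\mathbf W$ and on the hub degrees. There are two steps: a variance computation and a CLT.

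\textbf{Variance.} The diagonal contribution is
\[
\sum_k a_k^2\operatorname{Var}_{\mathbf W}(T_k)=\sum_k a_k^2\,\frac{\tilde\sigma_\infty^2}{k\,n(k)}\,(1+o(1))=S_{xx}(1+o(1)),
\]
using the (S2) per-vertex variance. Within a class, distinct vertices $v$ and $v'$ are handled by Lemma~\ref{lem:crosscov}, as are vertices in different classes. I expect that lemma to bound the covariance of $\bar D^{(v)}$ and $\bar D^{(v')}$ by $O(1/n)$ plus the contribution from a possible shared edge or shared neighbor. Given sparsity ($\varrho_n\to0$, $k_{\max}^3=o(n)$), such pairs are rare. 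Summing over pairs gives an off-diagonal contribution of order
\[
|\mathcal H|^2\max_k a_k^2/(k\,n(k))^2\cdot n^{-1}\lesssim |\mathcal H|^2/n\cdot O(1).
\]
This is $o(S_{xx})$ by the regularity conditions in (S3); the factor $(\log k)^2$ is absorbed by $\sum k\,n(k)(\log k)^2=O(S_{xx})$. Hence $\operatorname{Var}_{\mathbf W}(Z_4)=1+o_{\mathbb P}(1)$.

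\textbf{CLT.} I will use a Lyapunov argument over classes, combined with a dependency-graph (Stein) bound to handle the weak dependence. The third-moment sum is
\[
\sum_k |a_k|^3\,\mathbb E|T_k-\mu_0|^3/S_{xx}^{3/2}\lesssim \sum_k (k\,n(k))^{3/2}|x_k-\bar x_w|^3/S_{xx}^{3/2}.
\]
It is bounded by
\[
\max_k\bigl(k\,n(k)(\log k)^2\bigr)^{1/2}S_{xx}^{-1/2}\cdot \frac{\sum_k k\,n(k)(x_k-\bar x_w)^2}{S_{xx}},
\]
which tends to $0$ by the no-domination condition in (S2). Because the edges are independent given $\mathbf W$, each $\bar D^{(v)}$ is a function of the edges within distance two of $v$. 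A dependency-neighbourhood CLT then applies, with the overlap mass controlled exactly as in the covariance step.

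I expect this decoupling to be the main obstacle. Hubs may be adjacent or share neighbors, and conditioning on realized degrees breaks exact edge independence. My first attempt would be to condition on the hub-incident edges and use the Poisson-binomial structure of the remaining neighbor degrees. Finally, convergence conditional on $\mathbf W$ gives unconditional convergence by bounded convergence of the conditional distribution functions.

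For part (ii), I use Theorem~\ref{thm:PAlimit} and Theorem~\ref{thm:lln-pa}, which give $T_k=(m+\delta)\log k\,(1+o_{\mathbb P}(1))$ uniformly over the window as $k_{\mathrm{slope}}\to\infty$. Averaging over $n(k)$ vertices only strengthens this concentration. On the other hand, $\hat m_0$ is a single constant and is removed by centering. Therefore
\[
\hat\beta=\frac{\sum_k w_k(x_k-\bar x_w)\bigl[(m+\delta)x_k+e_k\bigr]}{S_{xx}}=m+\delta+\frac{\sum_k w_k(x_k-\bar x_w)e_k}{S_{xx}}.
\]
The errors satisfy $e_k=o(\log k)$. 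To show the error term vanishes, I need it relative to the spread, which amounts to $e_k$ being $o$ of the log-scale variation; for this I plan to use the $O(k)$ old-neighbor bound and the digamma expansion from Proposition~\ref{prop:EY}. Those give $e_k=O(1)$ plus $O_{\mathbb P}$ fluctuations, and Cauchy--Schwarz with $S_{xx}\to\infty$ then finishes the argument. The conclusion is $Z_4/\sqrt{S_{xx}}=\hat\beta\to m+\delta>0$, so the test has power tending to one.
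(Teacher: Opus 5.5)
Your plan for part (i) follows the paper's architecture: exact removal of $\hat\mu_0$ via $\sum_k w_k(x_k-\bar x_w)=0$, oracle reduction through (S1), a hub-indexed linear form with diagonal variance $S_{xx}(1+o(1))$, conditioning on hub-incident edges, a Chen--Shao dependency-neighbourhood CLT whose Lyapunov part is your no-domination computation, and unconditioning by bounded convergence.

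The off-diagonal variance step, however, fails as written, for two reasons.
\begin{itemize}
\item In $\sum_k a_k(T_k-\mu_0)=\sum_{v\in\mathcal H}c_v\psi_v$ each hub carries the coefficient $c_v=a_{D(v)}/n(D(v))=D(v)(\log D(v)-\bar x_w)/\tilde\sigma_\infty^2$, not $a_k/(k\,n(k))$. Your count-times-maximum bound therefore drops a factor $kk'$ per pair.
\item The pairwise covariance is not $O(1/n)$ when $v\sim v'$: the shared edge contributes $\frac{(k-\mu_0)(k'-\mu_0)}{kk'}p_{vv'}\approx W_vW_{v'}/L_n$. Calling such pairs rare is not a bound.
\end{itemize}
So $|\mathcal H|^2=o(nS_{xx})$ does not close the step.

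The paper treats the two effects separately.
\begin{itemize}
\item The shared-neighbour part is separable, $W_vW_{v'}\Theta_n/(kk')$ with $\Theta_n\le\sum_uW_u^3/L_n^2=O(1/n)$. Its aggregate is $\Theta_n\sum_{v\ne v'}b_vb_{v'}$ with $b_v\propto W_v(\log D(v)-\bar x_w^*)$. The sum $\sum_vb_v$ is annihilated by the exact identity $\sum_{v\in\mathcal H}D(v)(\log D(v)-\bar x_w^*)=0$, up to the discrepancy $\Xi=\sum_v(W_v-D(v))(\log D(v)-\bar x_w^*)$. This, not pair counting, is where $|\mathcal H|^2=o(nS_{xx})$ is used.
\item The adjacency part is bounded by Cauchy--Schwarz in the $w_k^*$ inner product. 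This reduces it to $\sum_{k\ge k_{\mathrm{slope}}(n)}n(k)k^3/L_n$ plus $O(k_{\max}^3/n)$, which vanishes only because $\mathbb E[W^3]<\infty$ makes $D^3$ uniformly integrable and $k_{\mathrm{slope}}(n)\to\infty$.
\end{itemize}
That tail-third-moment estimate is where the moment assumption does its work, and your plan has nothing in its place. Your per-pair $O(1/n)$ shared-neighbour bound can also be summed correctly by Cauchy--Schwarz: $(\sum_v|c_v|)^2\le\tilde\sigma_\infty^{-2}S_{xx}\sum_{v\in\mathcal H}D(v)$, giving a relative error $O(n^{-1}\sum_{v\in\mathcal H}D(v))\to0$.

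For part (ii) you follow the paper's route, and you are right that $e_k=o_{\mathbb P}(\log k)$ alone is not enough. The paper's step of treating $\sum_kw_k\epsilon_k^2/S_{xx}$ as a weighted mean is legitimate only under a design condition like $\sum_kk\,n(k)(\log k)^2=O(S_{xx})$, which (ii) does not assume.

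However, removing the constant and then applying Cauchy--Schwarz does not finish the argument either. The remainder $\eta_k$ contains per-hub fluctuations of genuinely order one.
\begin{itemize}
\item By Lemma~\ref{lem:posterior-U}, $kU_\varnothing$ has a nondegenerate $\mathrm{Gamma}(\alpha,1)$ limit, so $-\log U_\varnothing=\log k+O_{\mathbb P}(1)$ with a non-vanishing $O_{\mathbb P}(1)$ part.
\item Each of the $m$ old neighbours has degree of order $k$, so it too contributes an $O_{\mathbb P}(1)$ term.
\item Near the top of the window $n(k)=O(1)$, so class averaging does not shrink these fluctuations.
\end{itemize}
Cauchy--Schwarz then needs $\sum_kw_k\eta_k^2=o_{\mathbb P}(S_{xx})$. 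The facts $\eta_k=O_{\mathbb P}(1)$ and $S_{xx}\to\infty$ do not give this. The ratio $\sum_kw_k/S_{xx}$ is the reciprocal of the $w$-weighted variance of $\log k$ over the window. That ratio stays bounded away from zero for a power-law tail, because the weights $k\,n(k)$ make $\log k$ roughly exponential above $\log k_{\mathrm{slope}}(n)$.

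You need one of two things:
\begin{itemize}
\item a design condition forcing $\sum_kw_k=o(S_{xx})$, together with suitable uniform moment bounds on $\eta_k$ (the condition above does this when $k_{\mathrm{slope}}(n)\to\infty$); or
\item a genuine variance computation exploiting the centring and weak cross-hub dependence of the $\eta$'s, i.e.\ a preferential-attachment analogue of Lemma~\ref{lem:crosscov}.
\end{itemize}
Finally, uniformity of $T_k=(m+\delta)\log k\,(1+o_{\mathbb P}(1))$ over a window with $k_{\mathrm{slope}}(n)\to\infty$ does not follow from Theorems~\ref{thm:PAlimit} and~\ref{thm:lln-pa}, which concern fixed $k$ in the limiting tree. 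The paper makes the same unproved jump.
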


\begin{remark}[On the conditions, and what replaces the disjointness argument]
\label{rem:method4-conditions}
Condition (S2) supplies the per-class input: under $\mathbb E[W^3]<\infty$ the third-moment
bound holds and the no-domination condition is the Lyapunov condition for the
\emph{diagonal} part of the variance, exactly as in the earlier version of this remark.
The substantive change is (S3). The class statistics $\{T_k\}$ are \emph{not} independent
across $k$: they are functionals of the same edge set, and hubs in distinct classes may
be adjacent or share neighbors, so disjointness of the vertex sets
$\mathcal V(k)$ does not make the $T_k$ independent and does not by itself justify the
additivity of variances or the Lindeberg--Feller argument. Condition (S3) instead controls
the aggregate cross-class covariance directly: Lemma~\ref{lem:crosscov} shows that the
covariance sum vanishes because its leading, separable part is annihilated by the exact
weighted-centering identity $\sum_k w_k(\log k-\bar x_w)=0$; the only non-separable residual
is direct hub--hub adjacency, bounded term by term in Appendix~\ref{app:crosscov} and
controlled by the finite-third-moment assumption $\mathbb E[W^3]<\infty$. The sparsity
parameter $\varrho_n=\max_iW_i^2/L_n\to0$ is the
same condition already required for the degree-based calibration in
Section~\ref{sec:gof-notation} and used in the simulation design of
Section~\ref{subsec:cl_power}. When any of (S1)--(S3) is in doubt, the conditional bootstrap
calibration described after the proof provides a model-based fitted-null alternative to the
analytic $N(0,1)$ null.
\end{remark}

\section{Simulation Study}\label{sec:simulation}
The simulations address three finite-sample questions. We assess size and mean-shift sensitivity under rank-one and non-rank-one IRG nulls, sensitivity to controlled degree-distribution misspecification, and power against degree-matched PA alternatives. Methods~1--3 are evaluated as level tests, while Method~4 targets the positive residual slope predicted by PA. We use significance level $\alpha=0.05$ and, unless stated otherwise, $B=1000$ replications with $n=20000$. The exact non-rank-one Bernoulli experiment uses $n=5000$ because it requires a dense finite kernel matrix.

\subsection{Rank-one null calibration and mean-shift sensitivity}
\label{subsec:cl_power}
We generate a Chung-Lu random graph with $n = 20000$ nodes. Each vertex $i \in [n]$ is assigned an independent weight
\[
  w_i = 10\, U_i^{-1/3.15},
  \qquad U_i \sim \mathrm{Uniform}(0,1).
\]
Conditional on the weight sequence $\{w_i\}_{i=1}^n$, edges are generated independently with probability
\[
  p_{ij} = \min\!\left\{\frac{w_i w_j}{\sum_{\ell=1}^n w_\ell},\, 1\right\},
  \qquad 1 \leq i < j \leq n.
\]
Self-loops are not allowed. For the Pareto exponent $3.15>2$, $\max_i w_i^2/L_n=o_P(1)$, where $L_n=\sum_iw_i$. Hence the truncation in $p_{ij}$ and the exclusion of self-loops do not change the limiting Chung--Lu calibration, although they can produce a small finite-sample downward discrepancy between a weight and its expected degree.

For each of the $B = 1000$ replicated networks, we compute the three test statistics $T_1$, $\{T_k\}_{k \in \mathcal{K}^+_{\mathrm{level}}}$, and the corresponding $p$-values. To isolate sensitivity to a class-level mean shift, we evaluate the rejection rules on the same $B$ networks after substituting $\hat{\mu}_0 + \Delta$ for $\hat{\mu}_0$ in the standardization. For Method~1,
\[
  Z_1^{(\Delta)} = \frac{T_1 - (\hat{\mu}_0 + \Delta)}{\hat{\tilde\sigma}_\infty / \sqrt{k^* n(k^*)}},
\]
and analogously for Methods~2 and~3. The empirical rejection rate at shift $\Delta$ for method $i$ is
\[
  \hat{\pi}_i(\Delta)
  = \frac{1}{B} \sum_{b=1}^B \mathbf{1}\!\left\{ T_i^{(b)} \in R_i(\hat{\mu}_0 + \Delta) \right\},
  \quad i = 1, 2, 3,
\]
evaluated over $\Delta \in \{-10, -9, \ldots, 9, 10\}$ at nominal level $\alpha = 0.05$. By construction, $\hat{\pi}_i(0)$ estimates the empirical Type~I error of Method~$i$.

\begin{figure}[htbp]
  \centering
  \begin{subfigure}[b]{0.32\textwidth}
    \centering
    \includegraphics[width=\linewidth]{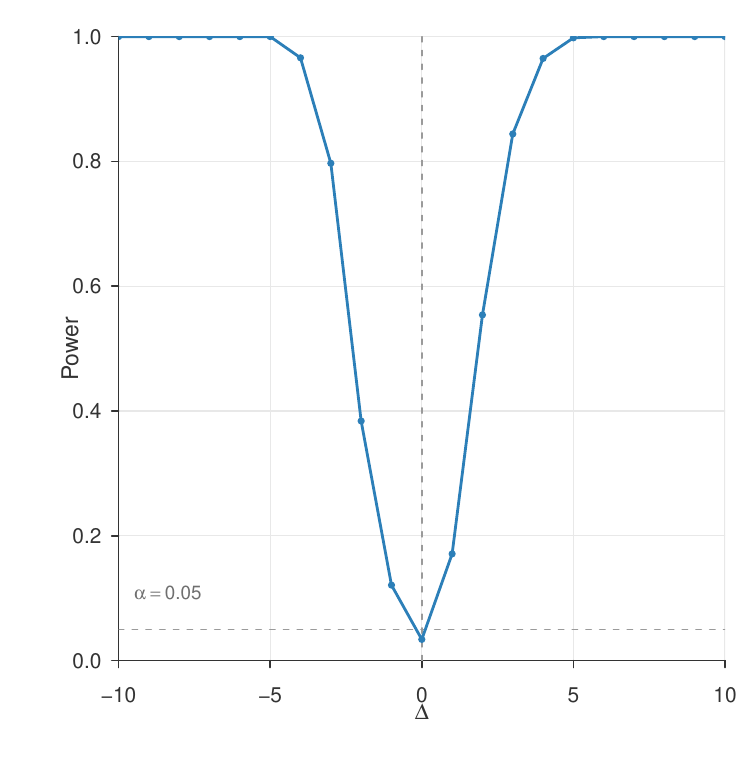}
    \caption{Method~1 (max-degree hub).}
    \label{fig:power_m1}
  \end{subfigure}
  \hfill
  \begin{subfigure}[b]{0.32\textwidth}
    \centering
    \includegraphics[width=\linewidth]{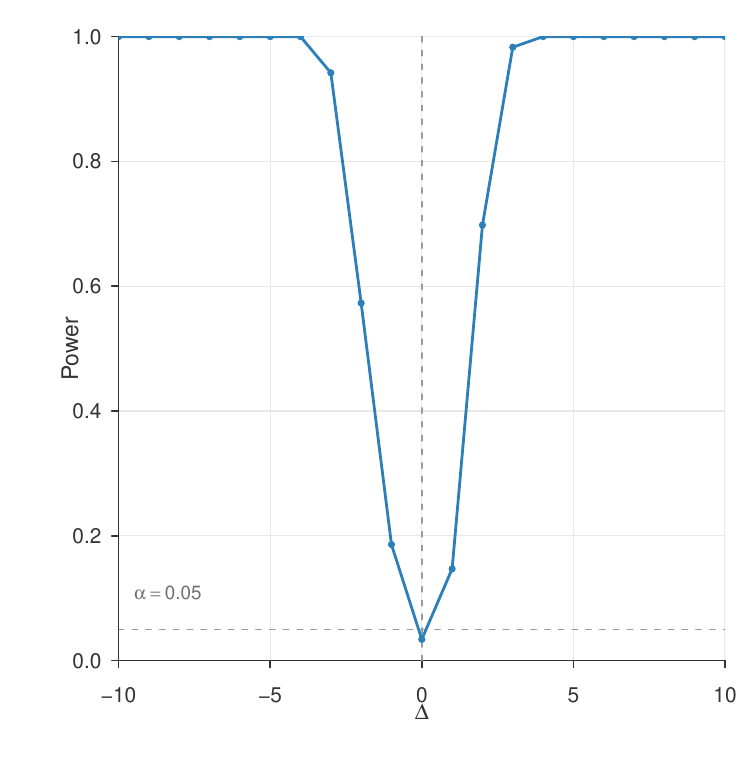}
    \caption{Method~2 (Bonferroni, top 5\%).}
    \label{fig:power_m2}
  \end{subfigure}
  \hfill
  \begin{subfigure}[b]{0.32\textwidth}
    \centering
    \includegraphics[width=\linewidth]{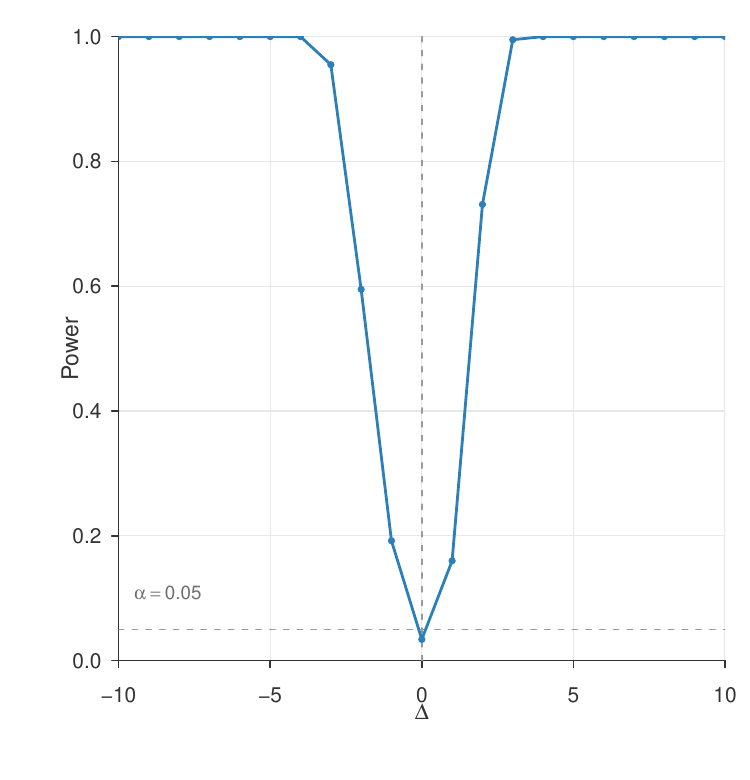}
    \caption{Method~3 (Simes, top 5\%).}
    \label{fig:power_m3}
  \end{subfigure}
  \caption{Mean-shift rejection curves $\hat{\pi}_i(\Delta)$ for Methods~1-3 as a function of the imposed centering shift $\Delta$ ($n = 20000$, $B = 1000$, $\alpha = 0.05$, Chung-Lu weight tail index $3.15$). The dashed horizontal line marks the nominal level $\alpha = 0.05$.}
  \label{fig:power_all}
\end{figure}
Figure~\ref{fig:power_all} shows mild conservativeness under the rank-one null, with an empirical rejection rate of $0.034$ for all three methods at $\Delta=0$. The rejection rate increases rapidly away from zero. At $\Delta=2$, the rates are $0.554$, $0.698$, and $0.731$ for Methods~1--3, respectively, and at $\Delta=3$ they rise to $0.844$, $0.983$, and $0.995$. The corresponding rates for negative shifts are slightly lower but have the same ordering once the departure is moderate. Thus Methods~2 and~3 are more sensitive than Method~1 to moderate common centering errors because they combine evidence across high-degree classes. Method~3 provides only a modest additional gain over Method~2.

\subsection{Specified non-rank-one null}
\label{subsec:irg_non1_power}
We next repeat the mean-shift analysis under a specified non-rank-one IRG null. The testing logic is the same as in Section~\ref{subsec:cl_power}, i.e. the three methods are applied to high-degree neighbor-degree statistics, and the rejection curve is obtained by shifting the null centering by $\Delta$. Rather than Chung-Lu quantities from Theorem~\ref{thm:clt-rank-one}, the null centering and null scale are the degree-dependent quantities in Theorem~\ref{thm:full-clt}.

We generate a two-type non-rank-one IRG. Each vertex first receives an independent type $c_i\in\{1,2\}$ with probabilities
\[
  \mathbb{P}(c_i=1)=0.3,\qquad
  \mathbb{P}(c_i=2)=0.7.
\]
Conditional on its type, vertex $i$ receives weight
\[
  w_i=10\,U_i^{-1/\alpha_{c_i}},\qquad
  U_i\sim\mathrm{Uniform}(0,1),
\]
where $\alpha_1=2.15$ and $\alpha_2=2.90$ are the type-specific tail indices. Edges are then generated independently with probabilities
\[
  p_{ij}
  = \min\!\left\{
    \frac{w_iw_j\,B_{c_ic_j}}{\sum_{\ell=1}^n w_\ell},\,1
  \right\},
  \qquad
  B=
  \begin{pmatrix}
    1.2 & 0.1\\
    0.1 & 0.9
  \end{pmatrix}.
\]
Since $B$ has rank larger than one, this model is not a Chung-Lu graph.
The null hypothesis for this experiment is
\[
  H_0:\quad
  \mathbb{E}\{\bar D_k\mid D_\varnothing=k\}=1+\bar\rho_k,
\]
with the non-rank-one centering given by Theorem~\ref{thm:full-clt}. In the simulation these quantities are estimated from the fixed type and weight sequence, giving plug-in values $\hat m_0(k)=1+\hat{\bar\rho}_k$ for each observed high-degree class. 
For finite-sample calibration we standardize by the full conditional standard deviation $s_k=\mathrm{sd}(\bar D_k\mid D_\varnothing=k)$, which combines all three orthogonal contributions of Remark~\ref{rem:r3-fails} such as the within-neighborhood sampling term ($O(k^{-1/2})$), the between-level-set term $|r'(\ell_k^*)|\sqrt{v_k}$, and the within-level-set term $\omega(\ell_k^*)$. This two-type block kernel has community-dependent affinity, so condition~(R3) need not hold and the within-level-set term need not be negligible.
We therefore use the full $s_k$ rather than its between-level-set component alone and regard the normal calibration as a finite-sample approximation. Figure~\ref{fig:power_all_non1} shows that combining this full scale with normal critical values yields conservative tests at $n=5000$, especially after adjustment across degree classes.

For each of the $B = 1000$ replicated networks, we compute the same statistics $T_1$ and $\{T_k:k\in\mathcal K^+_{\mathrm{level}}\}$ as in Section~\ref{subsec:cl_power}. For a degree class $k$, the standardized statistic under the non-rank-one null is
\[
  Z_k
  = \frac{T_k-\hat m_0(k)}
       {\hat s_0(k)/\sqrt{n(k)}}.
\]
The rejection rate at shift $\Delta$ is estimated by replacing $\hat m_0(k)$ with $\hat m_0(k)+\Delta$ in the standardization, analogously to Section~\ref{subsec:cl_power}. Thus
\[
  \hat{\pi}_i(\Delta)
  = \frac{1}{B} \sum_{b=1}^B
    \mathbf{1}\!\left\{ T_i^{(b)} \in R_i(\hat m_0+\Delta,\hat s_0) \right\},
  \quad i = 1, 2, 3,
\]
evaluated over $\Delta\in\{-10,-9,\ldots,9,10\}$ at nominal level $\alpha=0.05$. By construction, $\hat\pi_i(0)$ estimates the Type~I error of Method~$i$ under the non-rank-one IRG null.
Figure~\ref{fig:power_all_non1} displays the rejection curves.
\begin{figure}[htbp]
  \centering
  \begin{subfigure}[b]{0.32\textwidth}
    \centering
    \includegraphics[width=\linewidth]{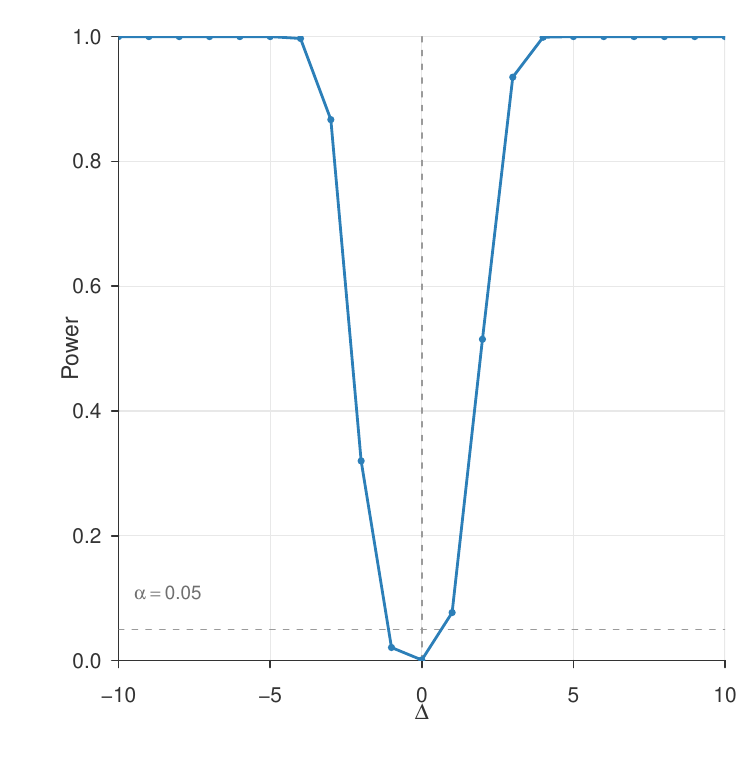}
    \caption{Method~1 (max-degree hub).}
    \label{fig:power_m1_non1}
  \end{subfigure}
  \hfill
  \begin{subfigure}[b]{0.32\textwidth}
    \centering
    \includegraphics[width=\linewidth]{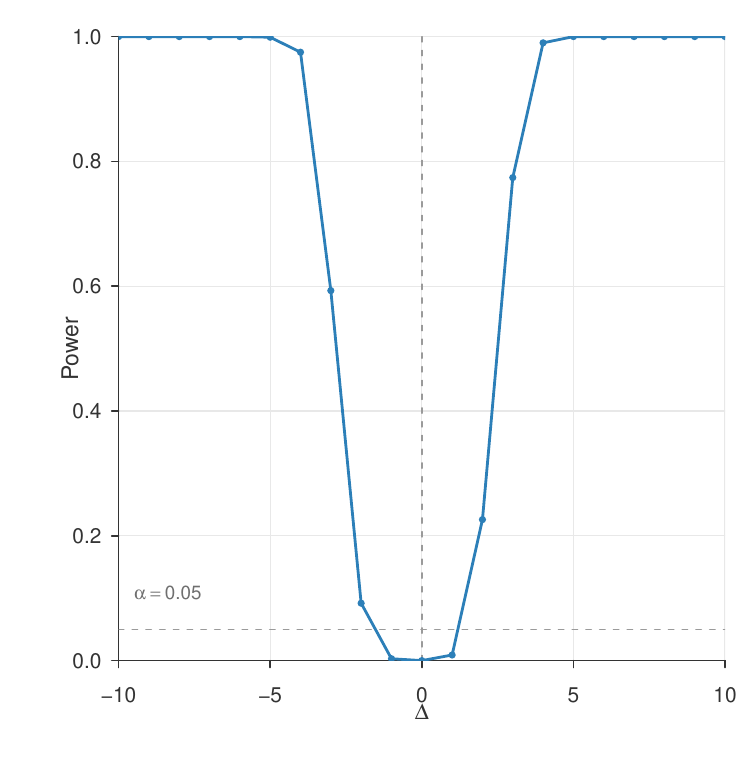}
    \caption{Method~2 (Bonferroni, top 5\%).}
    \label{fig:power_m2_non1}
  \end{subfigure}
  \hfill
  \begin{subfigure}[b]{0.32\textwidth}
    \centering
    \includegraphics[width=\linewidth]{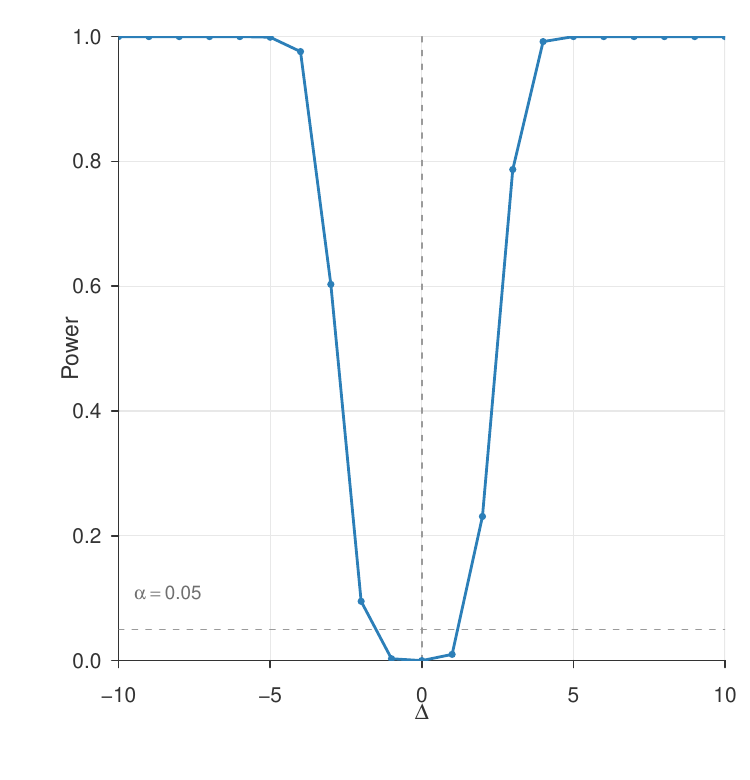}
    \caption{Method~3 (Simes, top 5\%).}
    \label{fig:power_m3_non1}
  \end{subfigure}
  \caption{Mean-shift rejection curves $\hat{\pi}_i(\Delta)$ for Methods~1-3 under the non-rank-one IRG null as a function of the imposed centering shift $\Delta$ ($n = 5000$, $B = 1000$, $\alpha = 0.05$). The dashed horizontal line marks the nominal level $\alpha = 0.05$.}
  \label{fig:power_all_non1}
\end{figure}
At $\Delta=0$, the rejection rates of Methods~1--3 are $0.001$, $0$, and $0$, respectively. Thus the finite-sample calibration is strongly conservative in this non-rank-one experiment. The rejection rate still increases as $|\Delta|$ grows, but the ordering differs from the rank-one case. At $\Delta=2$, Method~1 rejects with probability $0.515$, compared with $0.226$ and $0.231$ for Bonferroni and Simes. At $\Delta=-2$, the corresponding rates are $0.320$, $0.092$, and $0.095$. Here the cost of multiplicity and conservative class-level calibration exceeds the gain from pooling. The experiment therefore demonstrates strong control of false positives but also exposes a finite-sample sensitivity loss for the pooled procedures under a heterogeneous non-rank-one null.

\subsection{Sensitivity to a degree-distribution mismatch}
\label{subsec:tail_power}
The preceding two experiments trace sensitivity by restandardizing a fixed set of replications after shifting the plug-in null mean. We now use a network alternative whose degree distribution departs from the null by a controlled amount. Throughout this and the following subsections the Chung-Lu null uses Pareto weights with tail index $a_0 = 3.15$, so the third moment $\mathbb E[W^3]$ is finite and the null scale $\hat{\tilde\sigma}_\infty$ is
stably estimated.

We fix a scale-free null with Pareto weights of tail index $a_0$ and mean $\mu_W = 20$,
\[
  w_i = w_{\min}\,U_i^{-1/a_0},
  \qquad w_{\min} = \mu_W\,\frac{a_0-1}{a_0},
  \qquad U_i \sim \mathrm{Uniform}(0,1),
\]
drawn once and held fixed across replications. The null parameters are calibrated directly from the weight sequence, using the same rank-one centering $\mu_0$ and variance factor $\tilde\sigma_\infty^2$ defined in Section~\ref{sec:gof-notation}. Because the latent weights are observed in this experiment, $\hat\mu_0$ and $\hat{\tilde\sigma}_\infty^2$ are computed from the empirical moments of the fixed null weight sequence rather than from the degree-based plug-ins required for observed networks. This removes degree-moment estimation noise without changing the null quantities being estimated. The alternative is the same Chung-Lu model with all weights scaled by $1+\varepsilon$, so the mean degree increases by a fraction $\varepsilon$ while the tail index is unchanged. At $\varepsilon = 0$ the rejection rate estimates the size; for $\varepsilon > 0$ it estimates power against a pure scale departure. We take $a_0 = 3.15$, $n = 20000$, $B = 1000$.

\begin{figure}[htbp]
	\centering
	\includegraphics[width=\linewidth]{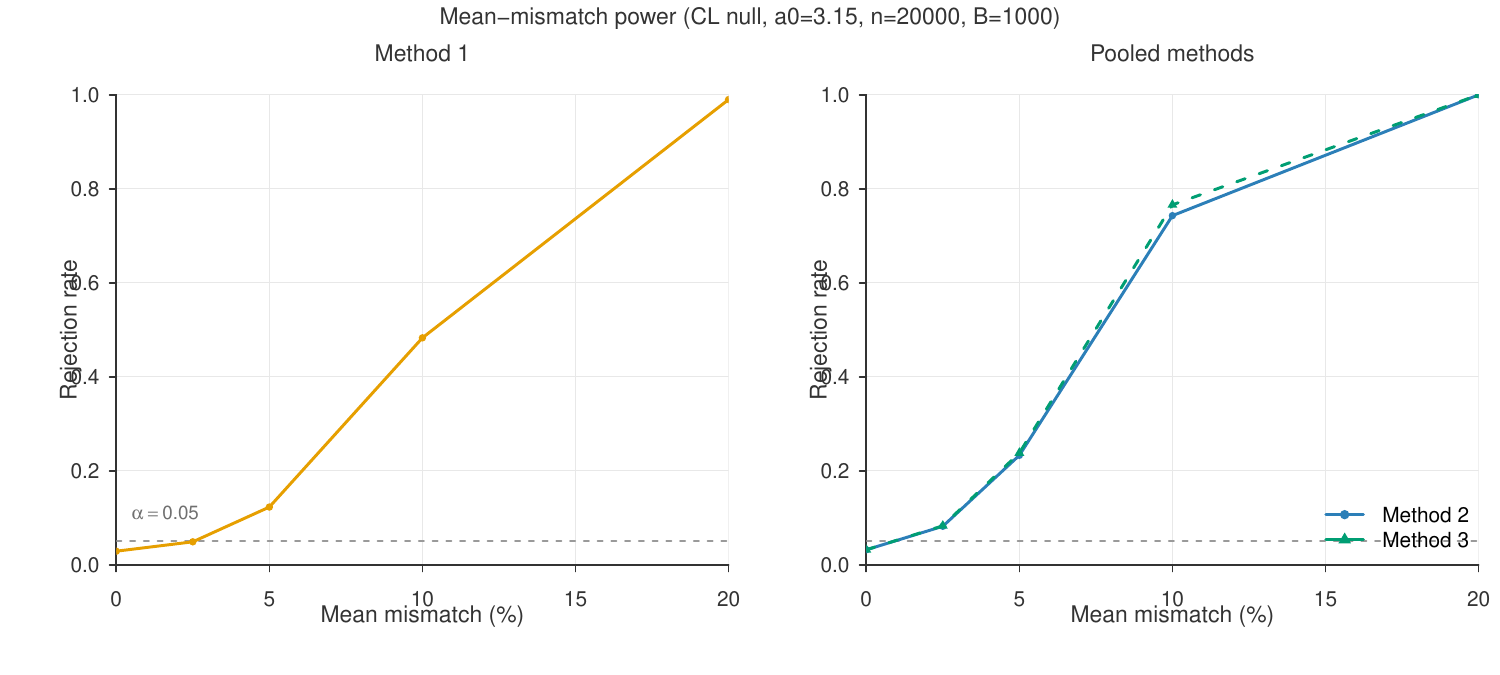}
	\caption{Empirical power under a controlled mean mismatch (Pareto null,
		$a_0 = 3.15$, $n = 20000$, $B = 1000$). Left: Method~1 (single max-degree hub), shown over the full mismatch range. Right: Methods~2 and~3 (pooled hubs), which rise much more steeply. The dashed line marks $\alpha = 0.05$.}
	\label{fig:meanmismatch}
\end{figure}
As seen from Figure~\ref{fig:meanmismatch}, at $\varepsilon=0$, the rejection rates are $0.029$, $0.032$, and $0.032$ for Methods~1--3, confirming mild conservativeness. A $2.5\%$ mean increase produces little power for Method~1, whose rejection rate is $0.049$, but raises the pooled rates to $0.082$ and $0.083$. At a $5\%$ increase, the rates are $0.123$, $0.233$, and $0.238$, and at a $10\%$ increase they reach $0.483$, $0.743$, and $0.766$. All three tests have essentially complete power at $20\%$. The near equality of Bonferroni and Simes indicates that pooling, rather than the choice between the two multiplicity corrections, drives the gain in this experiment.

\subsection{Level tests against preferential attachment}
\label{subsec:irg_pa_power}
We further evaluate the level tests against preferential attachment while matching the first-order degree behavior. For each PA parameter pair $(m,\delta)$ we construct a matched Chung-Lu null with Pareto weights of tail index
\[
  a_0 = 2 + \delta/m,
\]
which is equal to the PA survival-function exponent, i.e.\ $\mathbb{P}(D>k)\sim k^{-(2+\delta/m)}$. The mean weight $\mu_W = 2m$ is equal to the PA expected degree.
This matching makes the degree distributions comparable in tail exponent and mean, so rejection is not driven merely by a gross location or tail mismatch.

\begin{remark}[Why the degree tail cannot identify the mechanism]
\label{rem:tail-identifiability}
By construction, the matched Chung-Lu null and $\mathrm{PA}_n^{(m,\delta)}$ share the degree CCDF exponent $2+\delta/m$. Therefore a consistent tail-index estimator converges to the same value under both models and cannot, on its own, distinguish the mechanisms. This is the population-level reason a degree-based diagnostic is weak here, and it is why we use the second-order neighbor-degree slope, whose residual trend is $0$ under any rank-one IRG but $m+\delta>0$ under PA. 
Two caveats are worth stating.
First, the matching is asymptotic. At finite $n$, a naive tail-index estimate can separate the models because of pre-asymptotic bias. At the representative cell with target exponent $3.5$, a deep-tail maximum-likelihood estimate moves from $2.8$ toward $3.5$ as the threshold and $n$ increase, reaching $3.2$ at $n=20000$. This finite-sample gap reflects degree-distribution shape rather than cumulative advantage.
Second, this is why we frame the slope as a directional diagnostic rather than staging a degree-distribution power comparison, which would reward estimator bias rather than mechanism.
\end{remark}

The null hypothesis is
\[
  H_0: \text{the network is generated from the Chung--Lu model},
\]
whereas the alternative hypothesis is
\[
    H_a: \text{the network is generated from the $\text{PA}_n^{(m,\delta)}$ model}.
\]
We use the loop-free simple linear preferential attachment model with initial attractiveness $\delta>0$, consistently with Section~\ref{sec:locallimits}. Starting from a fixed simple seed graph, each new node attaches to $m$ distinct existing nodes with probabilities proportional to $\deg(v)+\delta$; self-loops and parallel edges are excluded. All rejection regions are constructed under the Chung-Lu null. For each test statistic $X(g)$, the two-sided rejection region is
\[
  R = \left\{ X :
    \left| \frac{X - \hat{\mu}_0}{\hat{\tilde\sigma}_\infty / \sqrt{k n(k)}} \right| > z_{1-\alpha/2}
  \right\},
\]
where $\hat{\mu}_0$ and $\hat{\tilde\sigma}_\infty / \sqrt{k n(k)}$ are the null mean and standard error estimated from the Chung--Lu degree sequence, as defined in Section~\ref{sec:methods}.

The power against the PA alternative is
\[
  \pi(g_{\mathrm{PA}}) = \mathbb{P}_{\mathrm{PA}}\!\left\{ X(g_{\mathrm{PA}}) \in R \right\},
\]
estimated by generating $B = 1000$ independent PA networks $g_{\mathrm{PA}}^{(1)}, \ldots, g_{\mathrm{PA}}^{(B)}$, each with $n = 20000$ nodes, and recording the proportion whose test statistic falls in $R$:
\[
  \widehat{\pi}(g_{\mathrm{PA}})
  = \frac{1}{B} \sum_{b=1}^{B}
    \mathbf{1}\!\left\{ X\!\left(g_{\mathrm{PA}}^{(b)}\right) \in R \right\}.
\]

Table~\ref{tab:allPA} reports results at the representative configuration $m=2$, $\delta/m=1.5$, $a_0=3.5$, and $\mu_W=4$. Under the matched Chung--Lu null, the empirical sizes of Methods~1--3 are $0.042$, $0.056$, and $0.056$. Their power against PA is $0.954$, $0.997$, and $0.997$, respectively, so the near-complete rejection by the pooled tests cannot be attributed to substantial size distortion. These methods detect a departure in the level of $\bar D_k$, whereas Method~4 in Section~\ref{subsec:slope_power} asks the more specific question of whether the residual increases with $\log k$.

\subsection{Log-degree slope test against preferential attachment}
\label{subsec:slope_power}

We apply Method~4 to the same per-cell matched Chung--Lu null and $\mathrm{PA}_n^{(m,\delta)}$ alternatives as in Section~\ref{subsec:irg_pa_power}, with $n=20000$ nodes. The null estimates $\hat\mu_0$, $\hat{\tilde\sigma}_\infty$ are computed from each network's own degree sequence. Because Method~4 is a \emph{slope} test, its high-degree set $\mathcal K^+_{\mathrm{slope}}$ uses the upper $50\%$ of distinct degrees rather than the top $5\%$ used by the level tests. The simulation is designed to estimate the PA-induced $\log k$ trend over a broad moderate-to-high range rather than only the most extreme degrees. The regression residuals $R_k=T_k-\hat\mu_0$ and weights $w_k=k\,n(k)/\hat{\tilde\sigma}_\infty^2$ are then formed across all $k\in\mathcal K^+_{\mathrm{slope}}$.

For each of the $B=1000$ Chung-Lu replications, we compute the slope statistic $Z_4$ and record whether $Z_4>z_{0.95}$ (one-sided test at $\alpha=0.05$). The empirical type-I error is
\[
  \hat\alpha_4
  = \frac{1}{B}\sum_{b=1}^B \mathbf{1}\!\left\{Z_4^{(b)}>z_{0.95}\right\}.
\]
Figure~\ref{fig:size_m4} shows a QQ-plot of the $B$ values of $Z_4$ against the standard normal, providing a visual check of the analytic null calibration in finite samples.

\begin{figure}[htbp]
  \centering
  \includegraphics[width=0.45\linewidth]{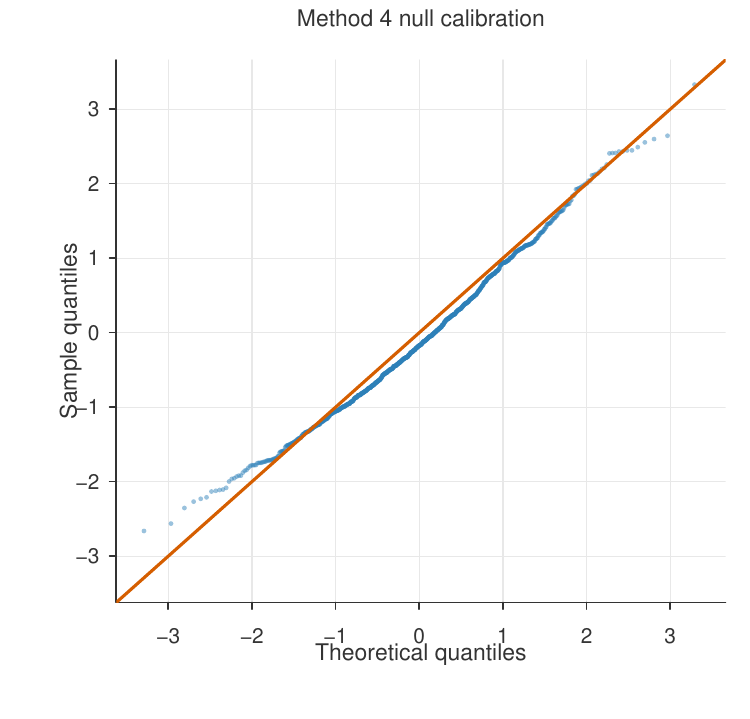}
  \caption{Normal QQ-plot of $Z_4$ under the matched Chung-Lu null ($a_0=3.5$, $\mu_W=4$, $n=20000$, $B=1000$). Agreement with the $45^\circ$ line indicates that the analytic $N(0,1)$ calibration is adequate at this sample size.}
  \label{fig:size_m4}
\end{figure}

At the representative matched-null configuration, $Z_4$ has empirical mean $-0.105$ and standard deviation $0.964$. Its upper-tail rejection rate is $0.043$. The slight left shift is visible in the QQ-plot, but the first two moments and the rejection rate support the working standard-normal calibration at $n=20000$.

For each of the $B=1000$ PA replications, we compute $Z_4$ using the same null estimates $(\hat\mu_0,\hat{\tilde\sigma}_\infty)$ fitted to the PA network's own degree sequence (plug-in under the null), and record whether $Z_4>z_{0.95}$.
The empirical power is $\hat\pi_4=B^{-1}\sum_b \mathbf{1}\{Z_4^{(b)}>z_{0.95}\}$. We vary $(m,\delta)$ over the grid $m\in\{2,4,6\}$ and $\delta/m\in\{1.15,1.25,1.5,2.0\}$. This grid lies in the finite-third-moment range for the matched tail exponent and therefore keeps the plug-in null scale stable across cells. Figure~\ref{fig:power_m4} plots $\hat\pi_4$ against $\delta/m$ for each $m$.

\begin{figure}[htbp]
  \centering
  \includegraphics[width=0.55\linewidth]{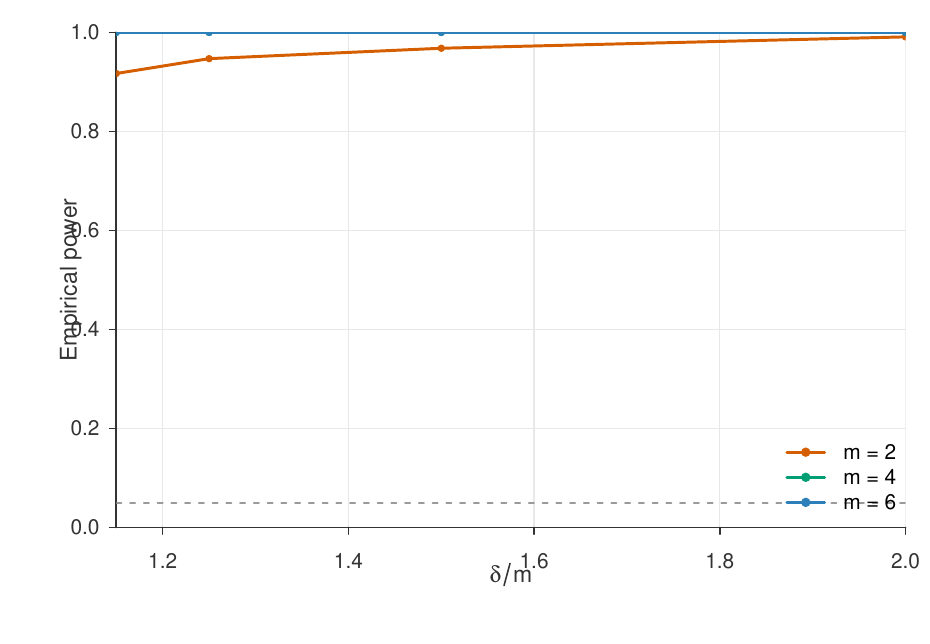}
  \caption{Empirical power $\hat\pi_4$ of Method~4 against
    $\mathrm{PA}_n^{(m,\delta)}$ as a function of $\delta/m$ for $m\in\{2,4,6\}$
    ($n=20000$, $B=1000$, $\alpha=0.05$, one-sided).
    The dashed line marks the nominal level.}
  \label{fig:power_m4}
\end{figure}

Table~\ref{tab:allPA} collects the empirical power of all four methods at the same representative configuration $(m=2,\delta/m=1.5)$ as in Section~\ref{subsec:irg_pa_power}.
Method~4 accumulates the $\log k$ signal across all high-degree classes through a single degree-of-freedom test rather than comparing levels class by class.

\begin{table}[htbp]
  \centering
  \caption{Empirical power against $\mathrm{PA}_n^{(2,3)}$ ($m=2$, $\delta/m=1.5$, matched CL null with $a_0=3.5$, $\mu_W=4$) for all four methods ($n=20000$, $B=1000$, $\alpha=0.05$). Methods~1--3 use two-sided tests; Method~4 uses a one-sided test.}
  \label{tab:allPA}
  \begin{tabular}{lcc}
    \toprule
    Method & Test type & Power $\hat\pi$ \\
    \midrule
    Method~1 (max-degree hub)      & two-sided & 0.954 \\
    Method~2 (Bonferroni, top 5\%) & two-sided & 0.997 \\
    Method~3 (Simes, top 5\%)      & two-sided & 0.997 \\
    Method~4 (log-degree slope)    & one-sided & 0.968 \\
    \bottomrule
  \end{tabular}
\end{table}

Across the 12 matched Chung-Lu cells, the empirical size of Method~4 ranges from $0.027$ to $0.060$. Its PA power increases from $0.917$ to $0.991$ as $\delta/m$ increases from $1.15$ to $2.0$ when $m=2$, and it equals $1.000$ in every cell with $m=4$ or $m=6$. At the representative configuration, its power is $0.968$, compared with $0.954$ for Method~1 and $0.997$ for both pooled level tests. Thus Method~4 is not uniformly more powerful than Methods~2 and~3 in these alternatives. Its contribution is a calibrated, one-sided assessment of the positive $\log k$ trend predicted by PA, while the level tests retain greater sensitivity to broad two-sided departures. Appendix~\ref{sec:appendix-sim-comparisons} compares this directional test with unweighted ANND and spectral baselines.

\section{Real Data Analysis}\label{sec:realdata}

The main applications contrast a high-school contact network, where class labels expose observed non-rank-one structure, with an arXiv coauthorship network, where cumulative advantage is plausible. The Reddit appendix adds large interaction graphs with negative residual slopes. In each case, Methods~1--3 test Chung--Lu hub-neighborhood levels and Method~4 estimates the residual direction over a reported upper-degree window. Appendix~\ref{subsec:real-tail-calibration} reports the tail-index diagnostics, calibration choices, and descriptive assortativity coefficients.

\subsection{High-school contact networks}
\label{subsec:highschool_examples}
We first analyze SocioPatterns high-school data collected with wearable proximity sensors \citep{fournet2014contact}. Each record marks a 20-second face-to-face contact, and each student has a class label. These labels distinguish dense within-class mixing from the cross-class contacts that bridge groups.

The raw proximity file contains $188508$ time-stamped contacts involving $327$ students, observed over $7375$ time points. We aggregate these temporal records
into an undirected simple graph on student pairs, so an edge means that two students were observed in contact at least once during the study. This all-contact graph has $5818$ distinct edges, mean degree $35.6$, and maximum degree $87$. To examine the role of class structure, we split the same edge list into two subgraphs. 
The \emph{within-class} graph contains $4016$ unique student pairs from the same class, while the \emph{cross-class} graph contains $1802$ pairs from different classes.
The companion friendship graph comes from the directed, unweighted
friendship survey administered in the same December 2013 Marseille high-school
study \citep{mastrandrea2015contact}.  A row $i\ j$ records that student $i$
reported a friendship with student $j$.  We remove self-nominations and
duplicate directed records, replace each unordered pair with one undirected
edge when either direction is reported, and collapse reciprocal nominations
to that same edge.  The resulting all-nomination projection contains $668$
directed nominations, $406$ unordered pairs among $134$ observed students,
and maximum degree $17$; $262$ pairs are reciprocal and $144$ are reported in
only one direction.  We apply the level tests before the slope diagnostic to
the three proximity projections and this friendship projection.

Table~\ref{tab:highschool-level} shows no level rejection for the all-contact graph. Both decomposed graphs reject, with overwhelming evidence within classes and a fitted-null bootstrap $p$-value of $0.001$ for every level test across classes. Aggregation can therefore hide local structure that becomes visible after separating contacts by class. The smaller friendship-nomination graph does not reject and is treated as exploratory because of its narrow degree range.

\begin{table}[htbp]
  \centering
  \small
  \caption{Methods~1-3 on the selected high-school graphs under the Chung-Lu
    null. The entries are two-sided $p$-values. The proximity decompositions
    reveal level departures that are hidden in the all-contact graph. The
    cross-class entries use $999$ fitted-null bootstrap graphs; the remaining
    entries use the analytic calibration.}
  \label{tab:highschool-level}
  \begin{tabular}{p{0.27\textwidth}ccccc}
    \toprule
    Network & $\hat\mu_0$ & $\hat{\tilde\sigma}_\infty$ & Method~1 & Method~2 & Method~3 \\
    \midrule
    Proximity, all contacts & 40.70 & 13.71 & $8.44\times10^{-1}$ & $6.83\times10^{-1}$ & $5.81\times10^{-1}$ \\
    Proximity, within class & 26.97 & 6.75 & $1.32\times10^{-9}$ & $<10^{-16}$ & $<10^{-16}$ \\
    Proximity, cross class & 17.93 & 11.49 & $1.00\times10^{-3}$ & $1.00\times10^{-3}$ & $1.00\times10^{-3}$ \\
    Friendship nominations & 8.01 & 3.59 & $2.28\times10^{-1}$ & $2.28\times10^{-1}$ & $2.28\times10^{-1}$ \\
    \bottomrule
  \end{tabular}
\end{table}

We then apply Method~4 to ask whether the departures are positive, negative, or flat after centering by the Chung--Lu plug-in mean. For comparability, all high-school graphs use the upper $25\%$ of distinct positive degrees in the slope regression. This gives 17, 10, and 10 slope classes for the all-contact, within-class, and cross-class proximity graphs, respectively. The friendship graph has only 5 slope classes, so we label its slope result exploratory.

\begin{table}[htbp]
  \centering
  \small
  \caption{Method~4 on the SocioPatterns high-school networks. The proximity
    and friendship graphs both use the upper $25\%$ of distinct degrees for the
    slope regression. Results are reported two-sided under the Chung--Lu null;
    the cross-class $p$-value uses $999$ fitted-null bootstrap graphs and the
    remaining $p$-values use the analytic calibration.}
  \label{tab:highschool-method4}
  \begin{tabular}{p{0.26\textwidth}rrrrrrl}
    \toprule
    Network & $n$ & Edges & $k_{\mathrm{slope}}$ & $M$ & $\hat\beta$ & $p$ & Decision \\
    \midrule
    Proximity, all contacts          & 327 & 5818 & 54 & 17 & $-5.50$ & $1.53\times10^{-2}$ & Reject CL (disassortative) \\
    Proximity, within class          & 327 & 4016 & 31 & 10 & $+13.65$ & $6.62\times10^{-13}$ & Reject CL (positive) \\
    Proximity, cross class           & 327 & 1802 & 32 & 10 & $-4.20$ & $1.61\times10^{-1}$ & Inconclusive \\
    Friendship nominations           & 134 & 406  & 13 & 5  & $+3.743$ & $3.739\times10^{-1}$ & Exploratory \\
    \bottomrule
  \end{tabular}
\end{table}

\begin{figure}[htbp]
	\centering
	\includegraphics[width=0.82\linewidth]{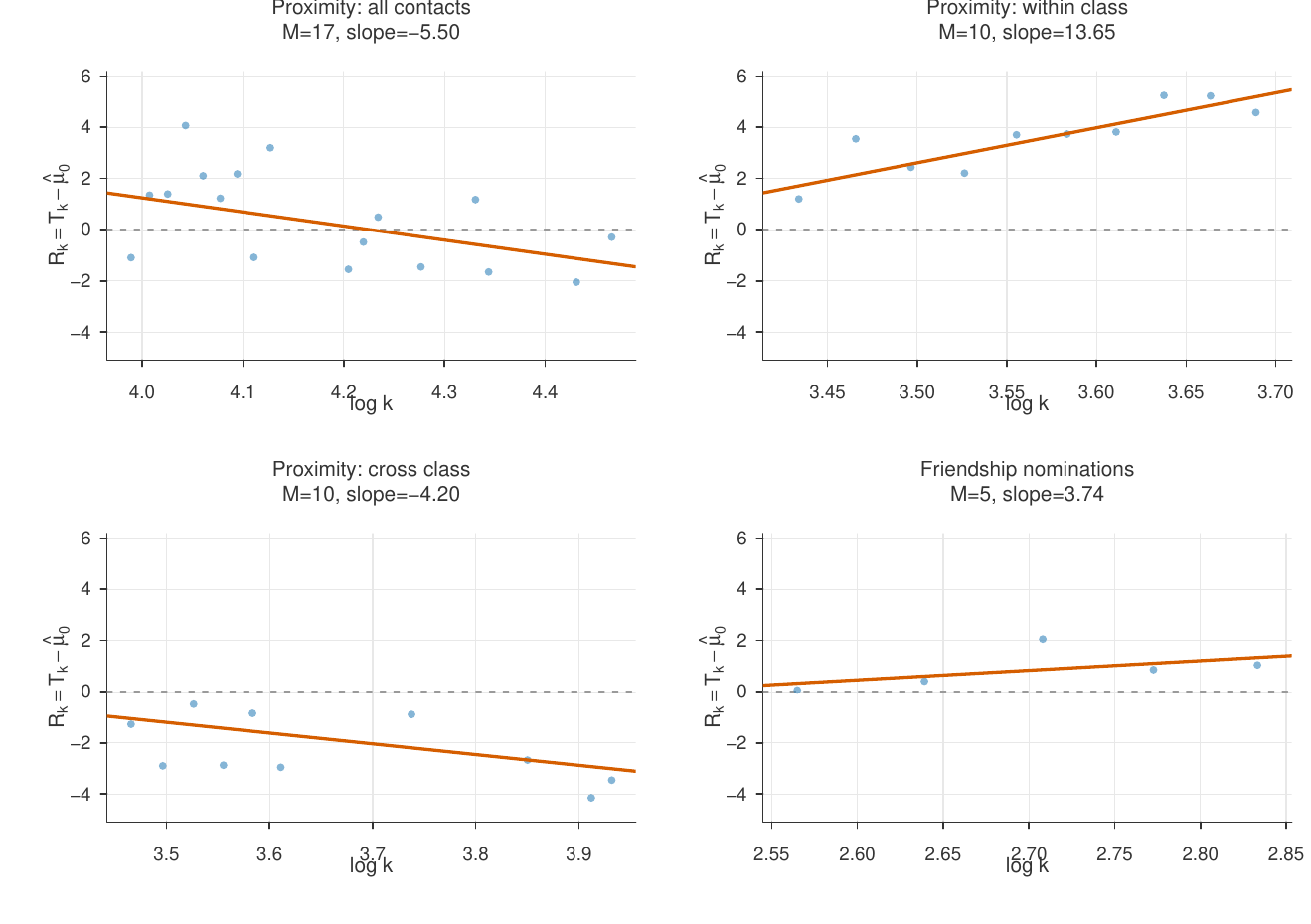}
	\caption{Method~4 residuals $R_k=T_k-\hat\mu_0$ against $\log k$ for the
		selected high-school graphs, with weighted least-squares fits (red). The
		within-class proximity graph shows a strong positive slope, whereas the
		full proximity graph is mildly negative and the cross-class graph is flat.}
	\label{fig:highschool-method4}
\end{figure}

Table~\ref{tab:highschool-method4} reports the Method~4 estimates, and Figure~\ref{fig:highschool-method4} displays the corresponding residual profiles. The directions differ sharply. The all-contact graph has a negative slope ($\hat\beta=-5.50$, $p=0.015$), whereas the within-class graph has a strong positive slope ($\hat\beta=13.65$, $p<10^{-12}$). The cross-class graph rejects in level but not in slope (bootstrap $p=0.161$). Hence the positive signal is specific to within-class contact rather than a property of the aggregate network.

Class labels also support a non-rank-one check. We fit the following degree-adjusted class-block null to each proximity projection.
\[
  \mathbb P(A_{ij}=1)
  =
  \min\{\gamma_{c_i c_j}d_i d_j,\,1\},
\]
Here $c_i$ and $d_i$ are the class label and observed degree, while $\gamma_{ab}$ matches the observed edge count between classes $a$ and $b$. We subtract the fitted block-adjusted neighbor mean and calibrate the Method~4 slope with $1000$ conditional bootstrap graphs. This check asks which trend remains after accounting for class mixing.

\begin{table}[htbp]
  \centering
  \small
  \caption{Class-block degree-adjusted Method~4 diagnostic for the high-school
    proximity graphs. The bootstrap interval is the central $95\%$ range of the
    slope under the fitted class-block null.}
  \label{tab:highschool-block}
  \begin{tabular}{p{0.31\textwidth}rrrrl}
    \toprule
    Network & $k_{\mathrm{slope}}$ & $M$ & $\hat\beta_{\mathrm{block}}$ & Null range & $p$ \\
    \midrule
    Proximity, all contacts & 54 & 17 & $+11.63$ & $[-6.65,\;7.13]$ & $1.00\times10^{-3}$ \\
    Proximity, within class & 31 & 10 & $+2.80$  & $[-4.21,\;4.82]$ & $2.20\times10^{-1}$ \\
    Proximity, cross class  & 32 & 10 & $-1.16$  & $[-6.97,\;7.48]$ & $7.11\times10^{-1}$ \\
    \bottomrule
  \end{tabular}
\end{table}

Table~\ref{tab:highschool-block} shows that after class adjustment, the strong within-class slope is no longer significant, whereas the all-contact graph shows a positive residual slope. The all-contact residual slope changes sign between the two nulls, from $\hat\beta=-5.50$ under the rank-one null to $\hat\beta_{\mathrm{block}}=+11.63$ under the class-block null. This is not a contradiction: the residual slope is defined relative to each null's predicted hub-neighbor level, and the two nulls differ. The rank-one null does not absorb the disassortative cross-class mixing, so its all-contact residual trend is negative; the richer class-block null does absorb that mixing, and the trend that remains after it is positive. The rank-one analysis thus locates failure of degree matching, and the class-block analysis shows how much of that failure is explained by observed group structure.

For transmission studies, the results distinguish globally connected students from concentrated within-class exposure. Because no epidemic dynamics are fitted, this distinction should be treated as a mechanism hypothesis rather than a policy estimate.

\subsection{arXiv coauthorship}
\label{subsec:arxiv_coauthorship_analysis}

We next analyze the LINQS arXiv MRDM 2005 author-paper data (\url{https://linqs.org/datasets/}). Vertices are author clusters, and two vertices share an unweighted edge when their authors appear on a common paper. Repeated collaborations collapse to one edge. This setting is substantively relevant because visible authors may attract new collaborators over time.

Related studies of statistical coauthorship networks examine centrality, communities, productivity, and collaboration trajectories \citep{ji2016coauthorship,ji2022cocitation}. Our analysis instead tests the degree-conditional neighborhood profile of an arXiv-based graph. It asks whether hub neighborhoods follow the level and slope implied by a fitted sparse null.

The raw file contains $58515$ author-paper rows, representing $29555$ paper clusters and $8967$ author clusters. Of these papers, $11481$ are single-author papers and hence do not create coauthorship edges. The largest paper has $10$ authors. After connecting all author pairs who share at least one paper and collapsing duplicate pairs, the resulting graph has $8967$ vertices and $20248$ edges. There are $772$ isolated authors, the mean degree is $4.52$, and the maximum degree is $63$.

The Chung-Lu plug-in moments for the coauthorship graph are $\hat\mu_0=12.05$ and $\hat{\tilde\sigma}_\infty=11.19$. Methods~1-3 strongly reject the Chung-Lu hub-neighbor level prediction. The maximum-degree author has $k^*=63$ and $Z=6.53$, while the top-$5\%$ degree-class threshold for Methods~2-3 is $56$; all three fitted-null bootstrap $p$-values equal $0.001$. Thus the level tests show that a rank-one degree-corrected null does not reproduce the hub neighborhoods of this collaboration graph.

Table~\ref{tab:arxiv-coauthorship} summarizes the four test results.

\begin{table}[htbp]
  \centering
  \small
  \caption{Methods~1-4 on the LINQS arXiv coauthorship graph. Vertices
    are author clusters, edges indicate at least one shared paper, and Method~4
    uses the upper $50\%$ of distinct positive degrees. The $p$-values use
    $999$ fitted-null bootstrap graphs.}
  \label{tab:arxiv-coauthorship}
  \begin{tabular}{p{0.18\textwidth}rrrrrl}
    \toprule
    Method & $k$ threshold & $M$ & $Z$ & $\hat\beta$ & $p$ & Decision \\
    \midrule
    Method~1 & 63 & 1  & $6.53$ & --     & $1.00\times10^{-3}$ & Reject CL \\
    Method~2 & 56 & 3  & --     & --     & $1.00\times10^{-3}$ & Reject CL \\
    Method~3 & 56 & 3  & --     & --     & $1.00\times10^{-3}$ & Reject CL \\
    Method~4 & 29 & 29 & $2.53$ & $2.41$ & $1.00\times10^{-3}$ & Reject CL (positive) \\
    \bottomrule
  \end{tabular}
\end{table}

Method~4 gives $\hat\beta=2.41$ with studentized-bootstrap $95\%$ confidence interval $[0.60,4.22]$ and $Z_4=2.53$ over the upper-$50\%$ degree window. Figure~\ref{fig:arxiv-coauthorship-method4} shows the corresponding residual profile and weighted fit. The positive residual slope agrees with the PA direction under the Chung-Lu benchmark. The positive global assortativity coefficient of $0.241$ is descriptively consistent with this conclusion but does not supply its null calibration.

\begin{figure}[htbp]
  \centering
  \includegraphics[width=0.70\linewidth]{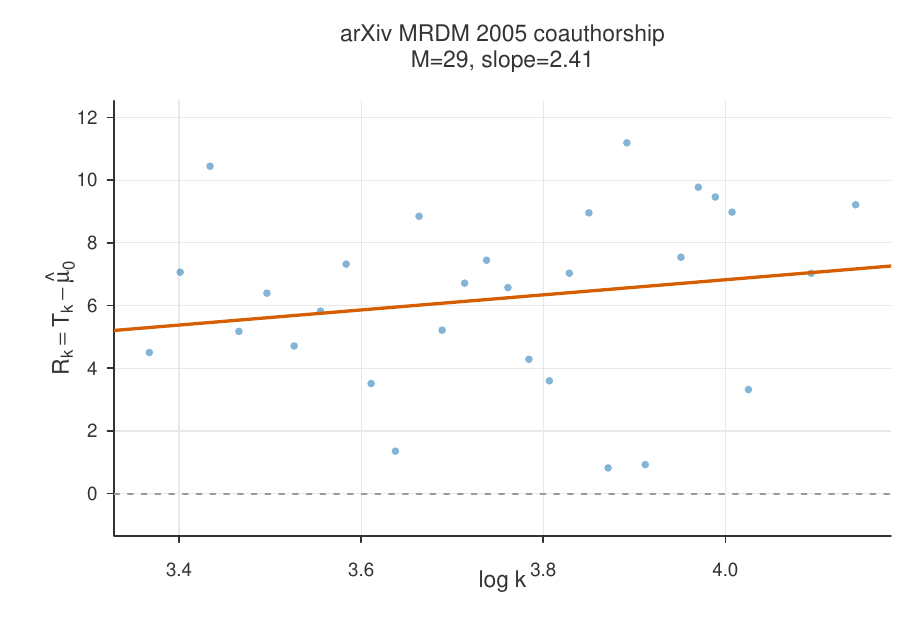}
  \caption{Method~4 residuals $R_k=T_k-\hat\mu_0$ against $\log k$ for the LINQS arXiv coauthorship graph, with the weighted least-squares fit (red).
    The fitted slope is significantly positive.}
  \label{fig:arxiv-coauthorship-method4}
\end{figure}

The Reddit interaction graphs in Appendix~\ref{sec:reddit_analysis} provide a large-network contrast to the arXiv result. Because their degree tails do not support the finite-third-moment analytic calibration, all reported Reddit $p$-values use the fitted-null bootstrap. The level tests reject Chung--Lu for \texttt{r/nba}, \texttt{r/CFB}, and \texttt{r/pics}; for \texttt{r/funny}, Methods~2--3 reject while the single max-degree test does not. Method~4 estimates negative residual slopes for \texttt{r/nba} ($\hat\beta=-53.30$, $p=0.001$), \texttt{r/CFB} ($\hat\beta=-32.80$, $p=0.001$), and \texttt{r/funny} ($\hat\beta=-3.37$, $p=0.009$), while \texttt{r/pics} is inconclusive ($\hat\beta=2.24$, $p=0.072$). Thus the same diagnostic that finds a positive coauthorship slope also detects disassortative hub neighborhoods in large interaction networks.

\section{Conclusion}
\label{sec:conclusion}

Taken together, our results show that matching the degree distribution does not ensure a credible model for hub neighborhoods. The local-limit calibration turns this discrepancy into two complementary questions concerning whether the predicted neighbor-degree levels fit and how the remaining error changes across hub degrees. Finite-sample experiments reveal a tradeoff. Pooling is effective for rank-one departures but can be costly when a heterogeneous non-rank-one null is conservatively calibrated, while the slope statistic contributes mechanism-relevant direction rather than universal power dominance. The empirical analyses also show that conclusions depend on network resolution and adjustment. Splitting school contacts exposes hidden lack of fit, class blocks explain only part of the resulting trend, and arXiv and Reddit exhibit opposite residual directions. These findings argue against inferring formation mechanism from degree tails or a single omnibus rejection and favor reporting the fitted level, residual slope, and null specification together.

Future work can extend the same idea in several directions. Directed and temporal versions would retain more information in citation, reply, and growth networks, making it possible to compare the fitted slope more directly with attachment-kernel parameters. Covariate-adjusted and latent-space nulls would allow the residualization step to incorporate observed groups or estimated types. Finally, resampling calibrations could make the method more robust for denser graphs, overlapping high-degree neighborhoods, and weighted networks.

\appendix

\section{Real-data calibration details}
\label{subsec:real-tail-calibration}

The analytic Chung--Lu calibration uses the condition
$\mathbb E[W^3]<\infty$.  We therefore assess the degree tail before choosing
between the analytic and bootstrap reference distributions.  We use the
survival-function convention
\[
  \mathbb P(D\geq x)=x^{-a}L(x),
\]
where $L$ is slowly varying.  Under a rank-one mixed-Poisson model with
regularly varying weights, the degree and weight distributions have the same
survival exponent.  Thus $a>3$ is the tail condition corresponding to
$\mathbb E[W^3]<\infty$.

For each graph, let $D_{(1)}\geq\cdots\geq D_{(n_+)}$ be the positive degrees.
We estimate $a$ with the Hill estimator
\[
  \widehat a
  =
  \left\{
    \frac{1}{r}\sum_{j=1}^{r}
    \log\!\left(\frac{D_{(j)}}{D_{(r+1)}}\right)
  \right\}^{-1}.
\]
To make the diagnostic reproducible rather than selecting a visually favorable
part of a Hill plot, we set $r=\lfloor\sqrt{n_+}\rfloor$.  Table
\ref{tab:real-tail-index} reports the resulting $k_{\min}=D_{(r+1)}$ and a
nonparametric vertex-bootstrap $95\%$ interval based on $1999$ resamples.  The
estimates are diagnostics of moment plausibility, not claims that a power law
is the unique tail model, especially for the smaller school graphs with short
degree ranges.

\begin{table}[htbp]
  \centering
  \scriptsize
  \caption{Degree-tail estimates and calibration used in the real-data
    analyses.  The index $a$ is the survival-function exponent, so the
    analytic finite-third-moment condition corresponds to $a>3$ under the
    regularly varying rank-one model.}
  \label{tab:real-tail-index}
  \begin{tabular}{p{0.32\textwidth}rrrrl}
    \toprule
    Network & $k_{\min}$ & $r$ & $\widehat a$ & $95\%$ CI & Calibration \\
    \midrule
    High school, all proximity contacts  & 56  & 18  & 5.70  & [4.58, 10.83] & Analytic \\
    High school, within-class contacts    & 36  & 18  & 20.34 & [13.48, 45.47] & Analytic \\
    High school, cross-class contacts     & 27  & 17  & 3.29  & [2.50, 6.24] & Bootstrap \\
    High school, friendship nominations  & 11  & 11  & 5.66  & [3.36, 10.54] & Analytic \\
    LINQS arXiv MRDM 2005 coauthorship   & 28  & 90  & 3.42  & [2.95, 4.09] & Bootstrap \\
    Reddit \texttt{r/nba}                & 563 & 177 & 2.71  & [2.39, 3.09] & Bootstrap \\
    Reddit \texttt{r/CFB}                & 271 & 134 & 2.65  & [2.28, 3.10] & Bootstrap \\
    Reddit \texttt{r/pics}               & 112 & 351 & 2.26  & [2.07, 2.52] & Bootstrap \\
    Reddit \texttt{r/funny}              & 72  & 386 & 2.26  & [2.04, 2.48] & Bootstrap \\
    \bottomrule
  \end{tabular}
\end{table}

We use the analytic normal calibration only when the entire tail-index
confidence interval lies above $3$.  If the interval includes $3$ or extends
below it, we use the fitted-null bootstrap.  This rule is fixed before
examining the neighbor-degree test outcomes.  It assigns bootstrap calibration
to the cross-class proximity graph, arXiv graph, and all four Reddit graphs,
rather than relying on the finite-third-moment normal approximation for those
cases.

\begin{table}[htbp]
  \centering
  \small
  \caption{Descriptive degree assortativity coefficients for the main-text
    real-data graphs. These global coefficients are included as a comparator,
    not as calibrated null tests.}
  \label{tab:assortativity}
  \begin{tabular}{llr}
    \toprule
    Dataset & Network & Degree assortativity \\
    \midrule
    High school & Proximity, all contacts & $+0.033$ \\
    High school & Proximity, within class & $+0.360$ \\
    High school & Proximity, cross class & $-0.087$ \\
    High school & Friendship nominations & $+0.287$ \\
    Coauthorship & LINQS arXiv, author collaboration & $+0.241$ \\
    \bottomrule
  \end{tabular}
\end{table}
Table~\ref{tab:assortativity} reports Newman's assortativity coefficient as a descriptive comparator. Because this global coefficient is neither calibrated to the fitted null nor separated into level and trend components, it does not answer the degree-conditional question tested here.

\section{Additional simulation comparisons}
\label{sec:appendix-sim-comparisons}

\subsection{Value of the calibration: the uncalibrated ANND slope}
\label{subsec:annd_baseline}
The statistic $T_k$ is the empirical average nearest-neighbor degree, a standard descriptor of degree dependence \citep{pastorsatorras2001dynamical,yao2018average}. The baseline regresses $T_k$ on $\log k$ by unweighted least squares over the same degree window. Method~4 instead uses the IRG variance $\tilde\sigma_\infty^2/\{k n(k)\}$. Its inverse-variance weights downweight noisy extreme classes, while $Z_4$ supplies an asymptotic $N(0,1)$ null and the PA limit $\hat\beta\to m+\delta$.

Both procedures control size.  The ANND baseline has estimated power $0.756$
in its $B=250$ comparison run, whereas the primary $B=1000$ Method~4
experiment in Table~\ref{tab:allPA} gives $0.968$.  We use that primary
Method~4 estimate here rather than the independent $B=250$ comparison-script
estimate $0.936$.

\begin{table}[htbp]
  \centering
  \caption{Calibrated slope test $Z_4$ versus the uncalibrated descriptive
    ANND-slope OLS $t$-test on the matched CL-versus-PA design at the
    representative configuration ($m=2$, $\delta/m=1.5$, $a_0=3.5$,
    $\mu_W=4$; $n=20000$, $\alpha=0.05$, one-sided).  The ANND row uses its
    $B=250$ comparison run; the Method~4 row repeats the primary $B=1000$
    estimate from Table~\ref{tab:allPA}.}
  \label{tab:annd-compare}
  \begin{tabular}{lcc}
    \toprule
    Slope test & Size under CL & Power vs PA \\
    \midrule
    Uncalibrated ANND-slope (OLS $t$) & 0.040 & 0.756 \\
    Calibrated $Z_4$ (Method~4; $B=1000$) & 0.043 & 0.968 \\
    \bottomrule
  \end{tabular}
\end{table}

\subsection{Comparison with a spectral goodness-of-fit test}
\label{subsec:spectral_compare}
A natural alternative is a \emph{spectral} goodness-of-fit test in the Lei-Bickel-Sarkar family \citep{lei2016goodness,bickel2016hypothesis}, which asks whether the residual of a fitted low-rank model carries an outlier eigenvalue. We adapt it to our rank-one (Chung-Lu) null. We fit $\hat P_{ij}=d_id_j/(2|E|)$, form the residual $R=A-\hat P$, and use the largest singular value $\sigma_1(R)$ as the statistic, computed matrix-free via Lanczos.
Because the adjacency spectrum of a sparse heavy-tailed graph is dominated by hubs ($\|A\|\sim\sqrt{d_{\max}}$), an unregularized residual mostly rediscovers the degree tail. We therefore also report a hub-regularized version that trims vertices of degree $>8\,\bar d$ before forming the residual, as recommended for the sparse regime \citep{le2017concentration}.
Since Tracy-Widom calibration is not valid at this sparsity, the spectral thresholds are Monte-Carlo calibrated from matched-CL replicates (size $=\alpha$ by construction). Table~\ref{tab:spectral-compare} reports the comparison at the representative configuration.

\begin{table}[htbp]
  \centering
  \caption{Spectral goodness-of-fit test ($\sigma_1$ of the degree-corrected
    residual, Monte-Carlo calibrated) versus the neighbor-degree slope $Z_4$
    on the matched CL-versus-PA design ($m=2$, $\delta/m=1.5$, $n=20000$,
    $\alpha=0.05$).  The spectral rows use $B=500$; the Method~4 row repeats
    the primary $B=1000$ estimate from Table~\ref{tab:allPA}.  Spectral size is
    $\alpha$ by construction; the informative size check is Method~4's
    analytic null.}
  \label{tab:spectral-compare}
  \begin{tabular}{lcc}
    \toprule
    Test & Size under CL & Power vs PA \\
    \midrule
    Spectral GoF, unregularized        & 0.05$^\dagger$ & 0.368 \\
    Spectral GoF, regularized (trim $8\bar d$) & 0.05$^\dagger$ & 0.988 \\
    Neighbor-degree slope $Z_4$ ($B=1000$) & 0.043 & 0.968 \\
    \bottomrule
  \end{tabular}\\[2pt]
  {\footnotesize $^\dagger$ Monte-Carlo calibrated, hence $\alpha$ by construction.}
\end{table}

The regularized spectral test has estimated power $0.988$ in its $B=500$ run,
slightly above the primary Method~4 estimate $0.968$ from $B=1000$, so raw
power does not favor the neighbor-degree statistic.  The exact binomial
$95\%$ interval for the latter is $[0.955,0.978]$; the two entries should still
be read as estimates from independent Monte Carlo runs.  The independent
$B=500$ spectral-comparison script gave $0.970$ for Method~4; we replace that
lower-precision rerun in the table with the primary $B=1000$ estimate so that
all three tables report the same canonical value.  The distinction is
interpretability and calibration. A spectral rejection is omnibus, whereas
the sign of $Z_4$ distinguishes the PA direction from disassortative mixing
and estimates the slope. Moreover, $Z_4$ has an analytic null in this regime,
while spectral performance changes from $0.368$ to $0.988$ with
regularization and requires Monte Carlo calibration.

\section{Proofs}
\label{sec:proofs}

\subsection{Proofs for Section~\ref{subsec:irg-results}}
\label{subsec:proof-irg}

\begin{proof}[Proof of Proposition~\ref{prop:irg-cond-mv}]
By Eq.\eqref{eq:neighbor-type}--\eqref{eq:offspring}, the pairs $(Y_i,\xi_i)$ are conditionally independent given $(X_\varnothing=x,D_\varnothing=k)$. For each $i$, applying the tower property:
\begin{align*}
  \mathbb{E}[D(i)\mid X_\varnothing=x,D_\varnothing=k]
  &= 1+\mathbb{E}[\xi_i\mid X_\varnothing=x,D_\varnothing=k]\\
  &= 1+\int_S\mathbb{E}[\xi_i\mid X_\varnothing=x,D_\varnothing=k,Y_i=y]\,Q_x(\mathrm{d}y)\\
  &= 1+\int_S\lambda(y)\,Q_x(\mathrm{d}y)=1+\rho(x),
\end{align*}
where the penultimate step uses $\mathbb{E}[\xi_i\mid x,k,Y_i=y]=\lambda(y)$ from Eq.\eqref{eq:offspring}. For the variance, apply the law of total variance conditioning on $Y_i$:
\begin{align*}
  \mathrm{Var}(D(i)\mid x,k)
  &= \mathbb{E}[\mathrm{Var}(D(i)\mid x,k,Y_i)\mid x,k]
   +\mathrm{Var}(\mathbb{E}[D(i)\mid x,k,Y_i]\mid x,k).
\end{align*}
From Eq.\eqref{eq:offspring}, $\mathrm{Var}(D(i)\mid x,k,Y_i=y)=\lambda(y)$ and $\mathbb{E}[D(i)\mid x,k,Y_i=y]=1+\lambda(y)$, so
\[
  \mathrm{Var}(D(i)\mid x,k)
  =\int_S\lambda(y)\,Q_x(\mathrm{d}y)+\mathrm{Var}_{Q_x}(\lambda(Y))
  =\rho(x)+\mathrm{Var}_{Q_x}(\lambda(Y))=\sigma^2(x). \qedhere
\]
\end{proof}

\begin{proof}[Proof of Theorem~\ref{thm:lln-irg}]
By Proposition~\ref{prop:irg-cond-mv} and the conditional independence of $D(1),\ldots,D(k)$ given $(X_\varnothing=x,D_\varnothing=k)$,
\[
  \mathrm{Var}(\bar D_k\mid X_\varnothing=x,D_\varnothing=k)=\frac{\sigma^2(x)}{k}.
\]
Chebyshev's inequality gives the stated bound.
\end{proof}

\begin{proof}[Proof of Theorem~\ref{thm:clt-irg-conditional}]
Given $(X_\varnothing=x,D_\varnothing=k)$, the summands $D(i)-(1+\rho(x))$ are i.i.d.\ with mean zero, variance $\sigma^2(x)$, and uniformly bounded $(2+\eta)$-th absolute moments. The Lyapunov CLT gives
\[
  \frac{\sum_{i=1}^k(D(i)-(1+\rho(x)))}{\sqrt{k\,\sigma^2(x)}}
  \;\xrightarrow{d}\;N(0,1),
\]
which rearranges to the stated result.
\end{proof}

\begin{proof}[Proof of Proposition~\ref{prop:posterior-type}]
The root degree satisfies $P(D_\varnothing=k\mid\Lambda=\ell)=e^{-\ell}\ell^k/k!$. Bayes' formula gives
\[
  f_{\Lambda\mid D_\varnothing=k}(\ell)
  =\frac{(e^{-\ell}\ell^k/k!)\,f_\nu(\ell)}
        {\int_0^\infty(e^{-s}s^k/k!)\,f_\nu(s)\,\mathrm{d}s},
\]
and the $k!$ cancels to give the stated formula.
\end{proof}

\begin{proof}[Proof of Theorem~\ref{thm:marginal-clt-I}]
Fix $t\in\mathbb{R}$ and write
$G_k(t\mid x):=\mathbb{P}[\sqrt{k}(\bar D_k-(1+\rho(x)))\le t\mid X_\varnothing=x,D_\varnothing=k]$.
Let $\pi_k$ denote the posterior law of $X_\varnothing$ given $D_\varnothing=k$.
By the tower property over the latent type,
\begin{equation}
  \mathbb{P}[\sqrt{k}(\bar D_k-(1+\rho(X_\varnothing)))\le t\mid D_\varnothing=k]
  =\int_S G_k(t\mid x)\,\pi_k(\mathrm{d}x).
  \label{eq:tower-margI}
\end{equation}
The subtlety is that the integrand converges pointwise while the measure $\pi_k$
moves with $k$, so bounded convergence does not apply directly.  We control the
interchange with a Berry--Ess\'een bound that is \emph{uniform in $x$}.

Given $(X_\varnothing=x,D_\varnothing=k)$, the variables $D(i)-(1+\rho(x))$, $i=1,\dots,k$, are i.i.d.\ with mean $0$, variance $\sigma^2(x)$, and, by the uniform $(2+\eta)$-moment bound, 
$$
    \beta_x:=\mathbb{E}[|D(i)-(1+\rho(x))|^{2+\eta}\mid x,k]\le C_*<\infty
$$ 
uniformly in $x$ (take $\eta\in(0,1]$, replacing $\eta$ by $1$ if larger).  The Berry-Ess\'een-Katz bound for sums under $(2+\eta)$ moments \citep[Thm.~V.2.5]{petrov1975sums} gives a universal $C_\eta$ with
\[
  \sup_{t\in\mathbb{R}}\Bigl|G_k(t\mid x)-\Phi\!\bigl(\tfrac{t}{\sigma(x)}\bigr)\Bigr|
  \le \frac{C_\eta\,\beta_x}{\sigma(x)^{2+\eta}\,k^{\eta/2}}
  \le \frac{C_\eta C_*}{\sigma_{\min}^{2+\eta}}\,k^{-\eta/2}
  =:\varepsilon_k\xrightarrow[k\to\infty]{}0,
\]
using (V1) for the lower bound on $\sigma(x)$.  Since $\varepsilon_k$ does not depend on $x$,
\begin{equation}
  \Bigl|\int_S G_k(t\mid x)\,\pi_k(\mathrm{d}x)
        -\int_S \Phi\!\bigl(\tfrac{t}{\sigma(x)}\bigr)\,\pi_k(\mathrm{d}x)\Bigr|
  \le\varepsilon_k\to 0.
  \label{eq:be-margI}
\end{equation}

\noindent
It remains to pass from the random posterior variance to its limit. The map $\sigma\mapsto\Phi(t/\sigma)$ is bounded by $1$ and continuous on $[\sigma_{\min},\infty)$.
By (V2), $\sigma^2(X_\varnothing)\xrightarrow{\mathbb{P}}\tilde\sigma^2_\infty$ under $\pi_k$, hence $\sigma(X_\varnothing)\xrightarrow{\mathbb{P}}\tilde\sigma_\infty\ge\sigma_{\min}$, and the bounded continuous mapping theorem yields
\[
  \int_S \Phi\!\bigl(\tfrac{t}{\sigma(x)}\bigr)\,\pi_k(\mathrm{d}x)
  =\mathbb{E}\bigl[\Phi(t/\sigma(X_\varnothing))\mid D_\varnothing=k\bigr]
  \to\Phi\!\bigl(\tfrac{t}{\tilde\sigma_\infty}\bigr).
\]
Combining this with Eq.\eqref{eq:tower-margI}--\eqref{eq:be-margI} gives the claim.
Finally, Chebyshev's inequality shows that the stated primitive condition implies (V2).
$$
	\mathbb{P}(|\sigma^2(X_\varnothing)-\tilde\sigma^2_\infty|>\varepsilon\mid\Lambda=\ell)
	\le \varepsilon^{-2}[\mathrm{Var}(\sigma^2\mid\Lambda=\ell)+(\tilde\sigma^2(\ell)-\tilde\sigma^2_\infty)^2]\to0,
$$
and integrating over the posterior of $\Lambda$ (which escapes every compact set as $k\to\infty$)
gives $\sigma^2(X_\varnothing)\xrightarrow{P}\tilde\sigma^2_\infty$.
\end{proof}

\begin{proof}[Proof of Theorem~\ref{thm:full-clt}]
Using the split Eq.\eqref{eq:rho-split}, $\rho(X_\varnothing)=r(\Lambda)+\zeta$ with
$\mathbb{E}[\zeta\mid\Lambda]=0$. Because $D_\varnothing$ depends on $X_\varnothing$ only through
$\Lambda$, the posterior law of $X_\varnothing$ given $(\Lambda,D_\varnothing=k)$ equals its law
given $\Lambda$ alone; hence $\mathbb{E}[\zeta\mid\Lambda,D_\varnothing=k]=0$,
$\bar\rho_k=\mathbb{E}[r(\Lambda)\mid D_\varnothing=k]$, and
\[
  \text{Term~II}=\rho(X_\varnothing)-\bar\rho_k
  =\underbrace{\bigl(r(\Lambda)-\bar\rho_k\bigr)}_{\text{between-level-set}}
   +\underbrace{\zeta}_{\text{within-level-set}}.
\]
Write $a_k:=|r'(\ell_k^*)|\sqrt{v_k}$ for the normalizer; by
Proposition~\ref{prop:r1-primitive}, $\ell_k^*\sim ck$ and $v_k\sim c^2k$, so
$a_k\asymp|r'(\ell_k^*)|\sqrt{k}$.

\noindent
For the between-level-set part, a second-order Taylor expansion of $r$ around
$\ell_k^*$ gives, for $\xi,\xi'$ between the
argument and $\ell_k^*$,
\begin{align*}
  r(\Lambda)-r(\ell_k^*)&=r'(\ell_k^*)(\Lambda-\ell_k^*)+\tfrac12 r''(\xi)(\Lambda-\ell_k^*)^2,\\
  \bar\rho_k-r(\ell_k^*)&=r'(\ell_k^*)\,\mathbb{E}[\Lambda-\ell_k^*\mid D_\varnothing=k]
    +\tfrac12\mathbb{E}[r''(\xi')(\Lambda-\ell_k^*)^2\mid D_\varnothing=k].
\end{align*}
Subtracting,
\[
  \frac{r(\Lambda)-\bar\rho_k}{a_k}
  =\underbrace{\operatorname{sgn}(r'(\ell_k^*))\,\frac{\Lambda-\ell_k^*-\mathbb{E}[\Lambda-\ell_k^*\mid D_\varnothing=k]}{\sqrt{v_k}}}_{=:A_k}
  +\underbrace{\frac{\tfrac12\bigl(r''(\xi)(\Lambda-\ell_k^*)^2-\mathbb{E}[r''(\xi')(\Lambda-\ell_k^*)^2\mid D_\varnothing=k]\bigr)}{a_k}}_{=:B_k}.
\]
\emph{Leading term $A_k$.} Under (R1), $(\Lambda-\ell_k^*)/\sqrt{v_k}\xrightarrow{d}N(0,1)$.
The posterior mean and mode of $\Lambda$ differ by $O(1)$ (e.g.\ for the Gamma posterior of the
Pareto/exponential cases the difference is exactly $1$, and in general by the Laplace expansion
$\mathbb{E}[\Lambda\mid D_\varnothing=k]-\ell_k^*=O(1)$), so
$\mathbb{E}[\Lambda-\ell_k^*\mid D_\varnothing=k]/\sqrt{v_k}=O(k^{-1/2})\to0$. Hence
$A_k\xrightarrow{d}N(0,1)$ (the sign only reflects the symmetric limit).
\emph{Remainder $B_k$.} By the local-comparability part of (R2), $|r''(\xi)|=O(|r''(\ell_k^*)|)$ on the event $\{|\Lambda-\ell_k^*|\le\delta\sqrt{v_k}\}$, which has posterior probability asymptotic $1$, and $(\Lambda-\ell_k^*)^2=O_P(v_k)$ with $\mathbb{E}[(\Lambda-\ell_k^*)^2\mid D_\varnothing=k]=O(v_k)$. Therefore
\[
  |B_k|=O_P\!\left(\frac{|r''(\ell_k^*)|\,v_k}{|r'(\ell_k^*)|\sqrt{v_k}}\right)
       =O_P\!\left(\frac{|r''(\ell_k^*)|\sqrt{k}}{|r'(\ell_k^*)|}\right)\xrightarrow{\mathbb{P}}0
\]
by the second part of (R2). Thus the between-level-set part divided by $a_k$ equals
$A_k+B_k\xrightarrow{d}N(0,1)$.

\noindent
The within-level-set part contributes $\zeta/a_k$. By the tower property and
$\mathbb{E}[\zeta\mid\Lambda,D_\varnothing=k]=0$,
\[
  \mathbb{E}[\zeta\mid D_\varnothing=k]=0,
  \qquad
  \mathrm{Var}(\zeta\mid D_\varnothing=k)=\mathbb{E}[\omega^2(\Lambda)\mid D_\varnothing=k].
\]
Since the posterior of $\Lambda$ concentrates at $\ell_k^*$ and $\omega$ is continuous, $\mathbb{E}[\omega^2(\Lambda)\mid D_\varnothing=k]=\omega^2(\ell_k^*)(1+o(1))$, so by Chebyshev
\[
  \frac{\zeta}{a_k}=O_P\!\left(\frac{\omega(\ell_k^*)}{|r'(\ell_k^*)|\sqrt{v_k}}\right)
  \xrightarrow{\mathbb{P}}0
\]
by condition~(R3). (No central limit theorem is invoked for $\zeta$, which is a single draw;
(R3) makes its contribution vanish.)

\noindent
It remains to control Term~I. By Theorem~\ref{thm:marginal-clt-I},
$\sqrt{k}\,(\bar D_k-(1+\rho(X_\varnothing)))=O_P(1)$, so
\[
  \frac{\text{Term~I}}{a_k}=\frac{O_P(k^{-1/2})}{|r'(\ell_k^*)|\sqrt{v_k}}
  =O_P\!\left(\frac{1}{k\,|r'(\ell_k^*)|}\right)\xrightarrow{\mathbb{P}}0
\]
by the first part of (R2), namely $k\,|r'(\ell_k^*)|\to\infty$. Combining these estimates via
Slutsky's theorem and decomposition Eq.\eqref{eq:decomp},
\[
  \frac{\bar D_k-(1+\bar\rho_k)}{a_k}
  =\frac{\text{Term~I}}{a_k}+(A_k+B_k)+\frac{\zeta}{a_k}
  \xrightarrow{d}N(0,1).
\]
For one-dimensional type $\zeta\equiv0$ and the within-level-set argument is vacuous, giving Corollary~\ref{cor:full-clt-1d}.
\end{proof}

\begin{proof}[Proof of Theorem~\ref{thm:clt-rank-one}]
Under the symmetric rank-one kernel, $Q_x(\mathrm{d}y)=\varphi(y)\mu(\mathrm{d}y)/\int\varphi\,\mathrm{d}\mu$ does not depend on $x$, so $\rho(x)=\int_S\lambda(y)Q(\mathrm{d}y)=:\rho$ is constant and Term~II$\equiv 0$. The CLT reduces to Theorem~\ref{thm:marginal-clt-I}. For the Chung--Lu kernel, $\lambda(x)=w(x)$ and $Q(\mathrm{d}y)=w(y)\mu(\mathrm{d}y)/\mathbb{E}[W]$, giving $\rho=\mathbb{E}[W^2]/\mathbb{E}[W]$ and
\[
  \mathrm{Var}_Q(\lambda(Y))=\frac{\mathbb{E}[W^3]}{\mathbb{E}[W]}-\left(\frac{\mathbb{E}[W^2]}{\mathbb{E}[W]}\right)^{\!2},
\]
so $\tilde\sigma_\infty^2=\sigma^2(x)=\rho+\mathrm{Var}_Q(\lambda(Y))$ yields the stated formula.
\end{proof}

\begin{proof}[Proof of Proposition~\ref{prop:r1-primitive}]
The posterior density is $f_{\Lambda\mid k}(\ell)=e^{h_k(\ell)}/Z_k$ with
$h_k(\ell):=k\log\ell-\ell+\log f_\nu(\ell)$.
We show $U_k:=(\Lambda-\ell_k^*)/\sqrt{v_k}\xrightarrow{d}N(0,1)$
by proving pointwise convergence of the density of $U_k$ and then applying Scheff\'e's lemma.

\noindent
For the mode asymptotics, differentiating gives $h_k'(\ell)=k/\ell-1+(\log f_\nu)'(\ell)$.
Integrating (L2) gives $(\log f_\nu)'(\ell)=c_0+O(\ell^{-1})$ for a constant $c_0\le 0$.
(The bound $c_0\le 0$ follows because $f_\nu$ must be normalizable:
$\int_1^\infty f_\nu d\ell<\infty$ forces $\log f_\nu(\ell)\to-\infty$, hence $c_0\le 0$.)
Setting $c:=1/(1-c_0)\in(0,1]$, the equation $h_k'(\ell_k^*)=0$ becomes
$k/\ell_k^*=1-(\log f_\nu)'(\ell_k^*)=1-c_0+O(\ell_k^{*-1})$,
so by the implicit function theorem
\[
  \ell_k^* = ck\bigl(1+O(k^{-1})\bigr).
\]

\noindent
For the curvature, $h_k''(\ell)=-k/\ell^2+(\log f_\nu)''(\ell)$.  By (L2) and $\ell_k^*=ck(1+O(k^{-1}))$,
$(\log f_\nu)''(\ell_k^*)=O(k^{-2})$, hence
\[
  h_k''(\ell_k^*)=-\frac{1}{c^2k}\bigl(1+O(k^{-1})\bigr),
  \qquad
  v_k:=-1/h_k''(\ell_k^*)=c^2k\bigl(1+O(k^{-1})\bigr).
\]

\noindent
Let $\Phi_k(s):=h_k(\ell_k^*+s\sqrt{v_k})-h_k(\ell_k^*)$.
By Taylor's theorem,
\[
  \Phi_k(s) = -\frac{s^2}{2}
    +\frac{h_k'''(\ell_k^*)}{6}\,s^3v_k^{3/2}
    +O\!\Bigl(s^4\sup_{|\ell-\ell_k^*|\le |s|\sqrt{v_k}}|h_k^{(4)}(\ell)|\cdot v_k^2\Bigr).
\]
By (L3), $h_k'''(\ell_k^*)=2k/\ell_k^{*3}+O(\ell_k^{*-3})=O(k^{-2})$,
so $h_k'''(\ell_k^*)v_k^{3/2}=O(k^{-1/2})\to 0$.
Likewise $h_k^{(4)}(\ell_k^*)=O(k^{-3})$ and $v_k^2=O(k^2)$, giving $h_k^{(4)}(\ell_k^*)v_k^2=O(k^{-1})$.
Hence, for each fixed $s$,
\[
  \Phi_k(s) = -\frac{s^2}{2} + o(1),
  \qquad\text{and so}\qquad
  e^{\Phi_k(s)}\longrightarrow e^{-s^2/2}.
\]

\noindent
For $|s|\le R$ and all large $k$, the Taylor remainder is bounded by $CR^3 k^{-1/2}+CR^4 k^{-1}$,
so there exists $k_0$ such that $\Phi_k(s)\le -s^2/4+1$ for all $|s|\le R$ and $k\ge k_0$.
Hence $e^{\Phi_k(s)}\le e\cdot e^{-s^2/4}$ on $|s|\le R$.

\noindent
For the tail bound, take $s>0$ (the left tail is symmetric), write
$\ell=\ell_k^*(1+x_s)$ with
$x_s=s\sqrt{v_k}/\ell_k^*=sc/\sqrt{k}\,(1+O(k^{-1}))$, and
decompose
\[
\Phi_k(s)=[k\log(1+x_s)-\ell_k^* x_s]+(\log f_\nu)(\ell)-(\log f_\nu)(\ell_k^*).
\]
The elementary bound $\log(1+x)\le x-x^2/\{2(1+x)\}$ follows by differentiation and gives
\[
k\log(1+x_s)-\ell_k^* x_s\le (k-\ell_k^*)x_s - \frac{kx_s^2}{2(1+x_s)}.
\]
By (L2), the prior piece satisfies $(\log f_\nu)(\ell)-(\log f_\nu)(\ell_k^*)=(\log f_\nu)'(\ell_k^*)\ell_k^* x_s+O(x_s^2)$.
Summing and using $h_k'(\ell_k^*)=0$, i.e.\ $(k-\ell_k^*)x_s+(\log f_\nu)'(\ell_k^*)\ell_k^* x_s=h_k'(\ell_k^*)\ell_k^* x_s=0$:
\[
  \Phi_k(s)\le -\frac{kx_s^2}{2(1+x_s)}+O(x_s^2)
  =-\frac{s^2c^2/2}{1+sc/\sqrt{k}}+O\!\left(\frac{s^2}{k}\right).
\]
If $s\le\sqrt{k}/c$, then $1+sc/\sqrt{k}\le 2$ and $\Phi_k(s)\le -s^2c^2/5$ for large $k$.
If $s>\sqrt{k}/c$, then $1+sc/\sqrt{k}<2sc/\sqrt{k}$, which gives 
$$
	s^2c^2/(2(1+sc/\sqrt{k}))>s^2c^2\sqrt{k}/(4sc)=sc\sqrt{k}/4,
$$ 
hence $\Phi_k(s)\le -sc\sqrt{k}/4+C$.
In both cases $e^{\Phi_k(s)}\le Ce^{-\min(c^2s^2/5,\,c\sqrt{k}\,s/4)}$,
which is uniformly integrable over $s\ge R$ for every fixed $R>0$.

\noindent
Finally, after the change of variables $\ell=\ell_k^*+s\sqrt{v_k}$, the density of $U_k$ is
\[
  f_{U_k}(s)=\frac{e^{\Phi_k(s)}}{\int_{-\ell_k^*/\sqrt{v_k}}^{\infty}e^{\Phi_k(u)}\,\mathrm{d}u}.
\]
By the Taylor expansion, local domination, and tail bound above, the dominating function
$e^{-s^2/4}$ is integrable and $e^{\Phi_k(s)}\to e^{-s^2/2}$
pointwise.  Since $\ell_k^*/\sqrt{v_k}\to\infty$, the lower limit $\to -\infty$, and dominated convergence
gives $\int e^{\Phi_k(s)}\,\mathrm{d}s\to\sqrt{2\pi}$.  Hence $f_{U_k}(s)\to(2\pi)^{-1/2}e^{-s^2/2}$
pointwise, and Scheff\'e's lemma yields $\int|f_{U_k}(s)-\phi(s)|\,\mathrm{d}s\to 0$,
which implies $U_k\xrightarrow{d}N(0,1)$.
\end{proof}

\begin{proof}[Proof of Proposition~\ref{prop:assortativity}]
By Definition~\ref{def:assortativity}, $r'(\ell)=\ell^{-\vartheta}L(\ell)$ with $L$ slowly varying
and $\vartheta\in[0,1)$, and $r''(\ell)=O(\ell^{-\vartheta-1}L(\ell))$. Evaluating at $\ell_k^*\sim ck$
and using that slowly varying functions satisfy $L(ck)\sim L(k)$ and $L(k)=k^{o(1)}$:
\[
  |r'(\ell_k^*)|=(ck)^{-\vartheta}L(ck)\bigl(1+o(1)\bigr)=c^{-\vartheta}k^{-\vartheta}L(k)\bigl(1+o(1)\bigr).
\]
\emph{First part of (R2).}
\[
  k\,|r'(\ell_k^*)|=c^{-\vartheta}k^{1-\vartheta}L(k)\bigl(1+o(1)\bigr)\longrightarrow\infty,
\]
since $1-\vartheta>0$ and $k^{1-\vartheta}L(k)\to\infty$ for slowly varying $L$ (Potter bounds).
\emph{Second part of (R2).}
\[
  \frac{\sqrt{k}\,|r''(\ell_k^*)|}{|r'(\ell_k^*)|}
  =\frac{\sqrt{k}\cdot O\bigl((ck)^{-\vartheta-1}L(k)\bigr)}{c^{-\vartheta}k^{-\vartheta}L(k)\,(1+o(1))}
  =O\!\left(\frac{\sqrt{k}\,k^{-\vartheta-1}}{k^{-\vartheta}}\right)
  =O\!\left(k^{-1/2}\right)\longrightarrow 0.
\]
\emph{Local comparability.} For $|\ell-\ell_k^*|\le\delta\sqrt{v_k}=O(\sqrt{k})=o(\ell_k^*)$, regular
variation of $r''$ gives $|r''(\ell)|/|r''(\ell_k^*)|=(\ell/\ell_k^*)^{-\vartheta-1}L(\ell)/L(\ell_k^*)\to1$
uniformly, so $\sup_{|\ell-\ell_k^*|\le\delta\sqrt{v_k}}|r''(\ell)|=O(|r''(\ell_k^*)|)$.
All three requirements of (R2) hold, and Theorem~\ref{thm:full-clt} applies.
\end{proof}

\subsection{Auxiliary lemmas and proofs for Section~\ref{subsec:pa-results}}
\label{subsec:proof-pa}

\begin{proof}[Proof of Lemma~\ref{lem:posterior-U}]
By \cite{van2024random}(p.~199), for $k>m$,
\[
  \mathbb{P}(D_\varnothing=k\mid A_\varnothing=a)
  =(1-a^{1/(\tau-1)})^{k-m}\,a^{(m+\delta)/(\tau-1)}\,
   \frac{\Gamma(k+\delta)}{(k-m)!\,\Gamma(m+\delta)}.
\]
Setting $u=a^{1/(\tau-1)}$ and using the marginal density $f_{U_\varnothing}(u)=(\tau-1)u^{\tau-2}$, Bayes' theorem gives
\begin{align*}
  f_{U_\varnothing\mid D_\varnothing=k}(u)
  &\propto (1-u)^{k-m}u^{m+\delta}\cdot(\tau-1)u^{\tau-2}
   \propto u^{m+\delta+1+\delta/m}(1-u)^{k-m},
\end{align*}
which is the kernel of a $\mathrm{Beta}(m+2+\delta+\delta/m,\,k-m+1)$ density.
\end{proof}

\begin{proof}[Proof of Proposition~\ref{prop:EO}]
Fix $i\in\{1,\ldots,m\}$. By the P\'olya point tree construction \cite[Chap.~5]{van2024random} and Eq.\eqref{eq:deg-O}, with $B_i=U_{\varnothing,i}^{1/(\tau-2)}U_\varnothing$ and $U_{\varnothing,i}\sim\mathrm{Uniform}(0,1)$ independent of $U_\varnothing$:
\[
  \mathbb{E}[D(i)\mid D_\varnothing=k,U_\varnothing=u]
  =1+m+\mathbb{E}[\Gamma_i^O]\cdot\mathbb{E}[B_i^{-1}-1\mid D_\varnothing=k,U_\varnothing=u].
\]
Since $\mathbb{E}[U_{\varnothing,i}^{-1/(\tau-2)}]=(m+\delta)/\delta$ (Beta moment formula with $U_{\varnothing,i}\sim\mathrm{Beta}(\tau-2,1)$) and $\mathbb{E}[\Gamma_i^O]=m+\delta+1$,
\[
  \mathbb{E}[D(i)\mid D_\varnothing=k,U_\varnothing=u]=1+m+(m+\delta+1)\!\left(\frac{m+\delta}{\delta}\cdot u^{-1}-1\right).
\]
Summing over $i=1,\ldots,m$ and integrating against the $\mathrm{Beta}(\alpha,\beta_k)$ posterior (Lemma~\ref{lem:posterior-U}):
\[
  \mathbb{E}\!\left[\sum_{i=1}^m D(i)\mid D_\varnothing=k\right]
  =-m\delta+m\cdot\frac{(m+\delta+1)(m+\delta)}{\delta}\cdot\mathbb{E}[U_\varnothing^{-1}\mid D_\varnothing=k].
\]
For $U\sim\mathrm{Beta}(\alpha,\beta_k)$ with $\alpha>1$, the negative moment is $\mathbb{E}[U^{-1}]=(\alpha+\beta_k-1)/(\alpha-1)$. Substituting and simplifying yields the formula.
\end{proof}

\begin{lemma}[Digamma expectation for Beta random variables]
\label{lem:digamma}
Let $U\sim\mathrm{Beta}(\alpha,\beta)$ with $\beta>1$. Then
\[
  \mathbb{E}\!\left[\frac{-\log U}{1-U}\right]
  =\frac{\alpha+\beta-1}{\beta-1}\bigl[\psi(\alpha+\beta-1)-\psi(\alpha)\bigr],
\]
where $\psi=\Gamma'/\Gamma$ is the digamma function.
\end{lemma}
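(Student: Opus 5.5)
The plan is to expand $(1-U)^{-1}$ as a geometric series and integrate term by term against the Beta density, which turns the expectation into a sum of log-moments of Beta variables. For $0<U<1$, write
\[
  \frac{-\log U}{1-U}=\sum_{n\ge0}U^n(-\log U),
\]
a series of nonnegative terms. By Tonelli, the interchange of sum and expectation is then justified with no further argument. For each $n$,
\[
  \mathbb{E}[U^n(-\log U)]=\frac{B(\alpha+n,\beta)}{B(\alpha,\beta)}\bigl[\psi(\alpha+\beta+n)-\psi(\alpha+n)\bigr].
\]
This follows by differentiating $B(a,\beta)=\int_0^1u^{a-1}(1-u)^{\beta-1}\,du$ in $a$ at $a=\alpha+n$, using $\partial_a\log B(a,\beta)=\psi(a)-\psi(a+\beta)$.

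A cleaner route, and the one I would actually write, avoids the series. Set
\[
  F(a):=\int_0^1 u^{a-1}(1-u)^{\beta-2}\,du=B(a,\beta-1).
\]
This is finite precisely because $\beta>1$, which is where that hypothesis enters. The key identity is
\[
  \frac{-\log u}{1-u}\,u^{\alpha-1}(1-u)^{\beta-1}=-\partial_a\bigl[u^{a-1}(1-u)^{\beta-2}\bigr]\Big|_{a=\alpha}.
\]
Multiplying by $(1-u)^{-1}$ lowers the second Beta parameter by one, and the logarithm comes from differentiating in $a$. Hence
\[
  \mathbb{E}\!\left[\frac{-\log U}{1-U}\right]=-\frac{F'(\alpha)}{B(\alpha,\beta)}
  =\frac{B(\alpha,\beta-1)}{B(\alpha,\beta)}\bigl[\psi(\alpha+\beta-1)-\psi(\alpha)\bigr].
\]
It remains to simplify the ratio of Beta functions with $\Gamma(x+1)=x\Gamma(x)$:
\[
  \frac{B(\alpha,\beta-1)}{B(\alpha,\beta)}=\frac{\Gamma(\beta-1)\Gamma(\alpha+\beta)}{\Gamma(\beta)\Gamma(\alpha+\beta-1)}=\frac{\alpha+\beta-1}{\beta-1}.
\]
This gives the claimed formula.

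The only step needing care is differentiating under the integral sign in $a$. Near $a=\alpha$, the integrand's derivative is $|\log u|\,u^{a-1}(1-u)^{\beta-2}$. For $a\ge\alpha-\epsilon>0$, this is dominated by $|\log u|\,u^{\alpha-\epsilon-1}(1-u)^{\beta-2}$, which is integrable because $\alpha>0$ and $\beta>1$. Near $u=1$, $\log u$ vanishes linearly, which only helps. So dominated convergence applies, and this justification is the main (mild) obstacle. Positivity of the integrand also guarantees that the expectation is finite and equal to the computed value.
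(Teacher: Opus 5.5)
Your second route is the paper's proof: differentiate $B(s,\beta-1)=\int_0^1 u^{s-1}(1-u)^{\beta-2}\,\mathrm{d}u$ in $s$ at $s=\alpha$, then reduce $B(\alpha,\beta-1)/B(\alpha,\beta)$ to $(\alpha+\beta-1)/(\beta-1)$ with the Gamma recurrence. Your dominated-convergence check for differentiating under the integral is correct and supplies a detail the paper leaves implicit; the geometric-series route you sketch first is not needed.
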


\begin{proof}
Differentiate $B(s,\beta-1)=\int_0^1 u^{s-1}(1-u)^{\beta-2}\,\mathrm{d}u$ with respect to $s$ at $s=\alpha$ both symbolically and through the integral to obtain
\[
  \int_0^1(-\log u)\,u^{\alpha-1}(1-u)^{\beta-2}\,\mathrm{d}u
  =B(\alpha,\beta-1)\bigl[\psi(\alpha+\beta-1)-\psi(\alpha)\bigr].
  \tag{$*$}
\]
Then
\[
  \mathbb{E}\!\left[\frac{-\log U}{1-U}\right]
  =\frac{1}{B(\alpha,\beta)}\int_0^1(-\log u)\,u^{\alpha-1}(1-u)^{\beta-2}\,\mathrm{d}u
  \stackrel{(*)}{=}\frac{B(\alpha,\beta-1)}{B(\alpha,\beta)}[\psi(\alpha+\beta-1)-\psi(\alpha)],
\]
and the Beta recurrence gives $B(\alpha,\beta-1)/B(\alpha,\beta)=(\alpha+\beta-1)/(\beta-1)$.
\end{proof}

\begin{proof}[Proof of Proposition~\ref{prop:EY}]
For each young neighbor $j$, Eq.\eqref{eq:deg-Y} and $\mathbb{E}[\Gamma_j^Y]=m+\delta$ give
\[
  \mathbb{E}[D(m+j)\mid D_\varnothing=k,U_\varnothing=u]
  =m+(m+\delta)\!\left(\frac{-\log u}{1-u}-1\right),
\]
using $C_j\mid(D_\varnothing=k,U_\varnothing=u)\sim\mathrm{Uniform}(u,1)$ so $\mathbb{E}[C_j^{-1}\mid\cdot]=(-\log u)/(1-u)$. Summing over $j=1,\ldots,k-m$ and integrating over the $\mathrm{Beta}(\alpha,\beta_k)$ posterior (Lemma~\ref{lem:posterior-U}):
\[
  \mathbb{E}\!\left[\sum_{j=1}^{k-m}D(m+j)\mid D_\varnothing=k\right]
  =(k-m)m+(m+\delta)\!\left((k-m)\mathbb{E}\!\left[\frac{-\log U_\varnothing}{1-U_\varnothing}\mid D_\varnothing=k\right]-(k-m)\right).
\]
Applying Lemma~\ref{lem:digamma} with $\beta=\beta_k=k-m+1$ and substituting $\alpha+\beta_k-1=k+\delta+\delta/m+2$ and $\beta_k-1=k-m$ gives the stated formula.
\end{proof}

\begin{lemma}[Digamma difference asymptotics]
\label{lem:digamma-asymp}
For fixed $\alpha>0$,
\[
  \psi(\alpha+k-m)-\psi(\alpha)=\log k+O(1)\quad\text{as }k\to\infty.
\]
More precisely,
\[
  \log\frac{\alpha+k-m}{\alpha}
  \le\psi(\alpha+k-m)-\psi(\alpha)
  \le\frac{1}{\alpha}+\log\frac{\alpha+k-m-1}{\alpha}.
\]
\end{lemma}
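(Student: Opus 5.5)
The plan is to write the digamma difference as a finite harmonic-type sum and compare that sum with integrals of $1/x$. Set $n:=k-m$. For large $k$ we have $n\ge 1$, and $n$ is a nonnegative integer because $k>m$ throughout. The recurrence $\psi(x+1)=\psi(x)+1/x$, iterated $n$ times, gives the exact identity
\[
  \psi(\alpha+n)-\psi(\alpha)=\sum_{j=0}^{n-1}\frac{1}{\alpha+j}.
\]
Everything then reduces to bounding this sum of a positive, strictly decreasing function $g(x)=1/(\alpha+x)$ at the integers $0,\dots,n-1$.

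\textbf{Lower bound.} Since $g$ is decreasing, $g(j)\ge\int_j^{j+1}g(x)\,\mathrm{d}x$ for each $j$. Summing over $j=0,\dots,n-1$ gives
\[
  \sum_{j=0}^{n-1}\frac{1}{\alpha+j}\ \ge\ \int_0^{n}\frac{\mathrm{d}x}{\alpha+x}=\log\frac{\alpha+n}{\alpha}.
\]
This is exactly the stated lower bound with $n=k-m$.

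\textbf{Upper bound.} Separate the $j=0$ term, which equals $1/\alpha$. For $j\ge1$, use $g(j)\le\int_{j-1}^{j}g(x)\,\mathrm{d}x$. This gives
\[
  \sum_{j=0}^{n-1}\frac{1}{\alpha+j}\ \le\ \frac1\alpha+\int_0^{n-1}\frac{\mathrm{d}x}{\alpha+x}=\frac1\alpha+\log\frac{\alpha+n-1}{\alpha},
\]
which is the claimed upper bound. When $n=1$ the integral is empty and both sides equal $1/\alpha$, so this case is consistent.

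\textbf{Asymptotics.} Both bounds have the form $\log(k+c)-\log\alpha+O(1)$ for fixed $\alpha$ and $m$. Since $\log(k+c)=\log k+O(1)$, this gives $\psi(\alpha+k-m)-\psi(\alpha)=\log k+O(1)$.

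There is no real obstacle here. The only points to handle are that $k-m$ must be a nonnegative integer for the recurrence identity to apply, which holds in the PA setting, and the bookkeeping of the endpoint term.

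If the lemma is applied with a non-integer offset, as in Proposition~\ref{prop:EY} with $\psi(k+c_1)-\psi(c_2)$, a further step is needed. In that case I would combine the lemma with the monotonicity of $\psi$ and the bound $\psi(x+1)-\psi(x)=1/x$ to absorb a shift of bounded size into the $O(1)$ term.
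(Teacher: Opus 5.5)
Your proof is correct and follows the paper's route: the telescoping identity $\psi(\alpha+n)-\psi(\alpha)=\sum_{j=0}^{n-1}(\alpha+j)^{-1}$, followed by integral comparison for the decreasing function $1/(\alpha+x)$. Separating the $j=0$ term before comparing with $\int_0^{n-1}$ is exactly what produces the stated upper bound $\frac{1}{\alpha}+\log\frac{\alpha+k-m-1}{\alpha}$; the paper's cited integral $\int_{-1}^{k-m-1}$ would instead give the weaker $\log\frac{\alpha+k-m-1}{\alpha-1}$, which also needs $\alpha>1$. Your closing concern about Proposition~\ref{prop:EY} is unnecessary, because there $c_1-c_2=-m$ is an integer, so the lemma applies directly with $\alpha=m+\delta+\delta/m+2$.
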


\begin{proof}
By the telescoping identity $\psi(\alpha+k-m)-\psi(\alpha)=\sum_{j=0}^{k-m-1}1/(\alpha+j)$. Comparing with the integrals $\int_0^{k-m}(\alpha+t)^{-1}\,\mathrm{d}t$ and $\int_{-1}^{k-m-1}(\alpha+t)^{-1}\,\mathrm{d}t$ via the integral test yields the two-sided bounds, and $\log k$ is the leading term since $\alpha$ and $m$ are fixed.
\end{proof}

\begin{proof}[Proof of Theorem~\ref{thm:lln-pa}]
Decompose the neighbor sum as $\sum_{i=1}^k D(i)=T^O_k+T^Y_k$ where $T^O_k=\sum_{i=1}^m D(i)$ and $T^Y_k=\sum_{j=1}^{k-m}D(m+j)$.

\medskip\noindent
By Proposition~\ref{prop:EO}, $\mathbb{E}[T^O_k\mid D_\varnothing=k]=O(k)$, so $T^O_k/(k\log k)\xrightarrow{\mathbb{P}}0$.

\medskip\noindent
Write $N_j:=D(m+j)-m$ and $\lambda_j:=\Gamma_j^Y(C_j^{-1}-1)$, so $N_j\mid(\lambda_j)\sim\mathrm{Poi}(\lambda_j)$ and $S_k:=\sum_{j=1}^{k-m}(N_j-\lambda_j)$ is the Poisson-centering error. Since $\mathbb{E}[S_k^2\mid D_\varnothing=k]=\mathbb{E}[\sum_j\lambda_j\mid D_\varnothing=k]\sim(m+\delta)k\log k$ (by Proposition~\ref{prop:EY} and Lemma~\ref{lem:digamma-asymp}), Chebyshev gives $S_k/(k\log k)\xrightarrow{\mathbb{P}}0$.

\medskip\noindent
For fixed $u$, the $\lambda_j$ are conditionally i.i.d.\ with mean $\mu(u):=(m+\delta)((-\log u)/(1-u)-1)$. The SLLN gives $(k-m)^{-1}\sum_j\lambda_j\xrightarrow{\mathrm{a.s.}}\mu(u)$, and integrating over $U_\varnothing\mid D_\varnothing=k$ (the Beta posterior):
\[
  \mathbb{P}\!\left[\left|\frac{1}{k-m}\sum_j\lambda_j-\mu(U_\varnothing)\right|\ge\varepsilon\mid D_\varnothing=k\right]\to 0.
\]

\medskip\noindent
By Lemma~\ref{lem:posterior-U}, $U_\varnothing\mid D_\varnothing=k\xrightarrow{\mathbb{P}}0$ as $k\to\infty$, so $-\log U_\varnothing\sim\log k$ in probability and $\mu(U_\varnothing)/\log k\xrightarrow{\mathbb{P}}m+\delta$.

\medskip\noindent
Combining via Slutsky's theorem, $T^Y_k/(k\log k)\xrightarrow{\mathbb{P}}m+\delta$, and together with the old-neighbor bound above the theorem follows.
\end{proof}

\subsection{Proof of Theorem~\ref{thm:method4}}
\label{subsec:proof-method4}

Throughout this subsection we work under the Chung--Lu null conditionally on the
weight sequence $\mathbf W$, so that the edge indicators $\{A_{ij}\}_{i<j}$ are
independent with $p_{ij}=\min\{W_iW_j/L_n,1\}$, and we further condition on the realized
hub degrees, as in the definition of $T_k$ and of its null variance
$\tilde\sigma_\infty^2/k$. Write $\mathbb E_{\mathbf W},\operatorname{Var}_{\mathbf W},
\operatorname{Cov}_{\mathbf W}$ for the corresponding conditional moments. It is
convenient to reindex the slope statistic by hub. Let
$\mathcal H:=\{v\in V:D(v)\in\mathcal K^+_{\mathrm{slope}}\}$, and for $v\in\mathcal H$
with $D(v)=k$ set
\[
  \psi_v:=\bar D_k^{(v)}-\mu_0=\frac1k\sum_{u\in\mathcal N(v)}\bigl(D(u)-\mu_0\bigr),
  \qquad
  \gamma_v:=\frac{w^*_{D(v)}\,(\log D(v)-\bar x_w^*)}{n(D(v))\,\sqrt{S^*_{xx}}},
\]
with oracle weights $w_k^*=k\,n(k)/\tilde\sigma_\infty^2$ and oracle design quantities
$(\bar x_w^*,S_{xx}^*)$. Then $T_k-\mu_0=n(k)^{-1}\sum_{v\in\mathcal V(k)}\psi_v$ and, using
$\sum_kw_k(\log k-\bar x_w)=0$ (so the plug-in $\hat\mu_0$ cancels exactly), the oracle
statistic is the linear form
\begin{equation}
  \label{eq:Z4star-linear}
  Z_4^*=\sum_{v\in\mathcal H}\gamma_v\,\psi_v .
\end{equation}
The next lemma is what replaces the incorrect assertion that the class statistics are
independent because the sets $\mathcal V(k)$ are disjoint.

\begin{lemma}[Negligible cross-class covariance]
\label{lem:crosscov}
Under the conditions of Theorem~\ref{thm:method4}(i),
\[
  \sum_{\substack{v,v'\in\mathcal H\\ v\ne v'}}
  \gamma_v\gamma_{v'}\,\operatorname{Cov}_{\mathbf W}(\psi_v,\psi_{v'})
  = O_{\mathbb P}\!\left(\frac{k_{\max}}{n}\right)
   +O_{\mathbb P}\!\left(\frac{k_{\max}^3}{n}\right)
   +O_{\mathbb P}(\varrho_n)
  = o_{\mathbb P}(1),
\]
and consequently
$\operatorname{Var}_{\mathbf W}(Z_4^*)
 =\sum_{v\in\mathcal H}\gamma_v^2\operatorname{Var}_{\mathbf W}(\psi_v)+o_{\mathbb P}(1)
 =1+o_{\mathbb P}(1)$.
\end{lemma}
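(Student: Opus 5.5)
I would expand each $\psi_v$ as a linear form in the independent edge indicators and compute $\operatorname{Cov}_{\mathbf W}(\psi_v,\psi_{v'})$ for $v\ne v'$ by classifying the shared randomness. Writing $D(u)=\sum_{w}A_{uw}$, we have
\[
  k\,\psi_v=\sum_{u}A_{vu}\bigl(D(u)-\mu_0\bigr),
\]
so $\psi_v$ is a quadratic polynomial in the $A$'s. For $v\ne v'$ with $D(v)=k$ and $D(v')=k'$, the covariance splits into three contributions.

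\begin{enumerate}
\item[(a)] A \emph{separable part}: common neighbors $u$ of $v$ and $v'$, or neighbors $u\sim v$ and $u'\sim v'$ that are adjacent to each other. After conditioning on the hub degrees, this part factorizes to leading order as $f(k)g(k')$ with $f,g$ depending only on the degrees. The reason is that in the rank-one model the neighbor-type law $Q$ does not depend on the hub, so a neighbor's degree excess has a hub-independent mean.
\item[(b)] A \emph{direct adjacency part}, $A_{vv'}=1$. Here each hub is a neighbor of the other, contributing $(k'-\mu_0)/k$ to $\psi_v$.
\item[(c)] \emph{Higher-order overlaps}, such as two-step paths and common neighbors with multiplicity.
\end{enumerate}

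For (a), I would use the representation \eqref{eq:Z4star-linear}. Summing $\gamma_v\gamma_{v'}f(k)g(k')$ over pairs gives
\[
  \Bigl(\sum_k n(k)\gamma_{(k)} f(k)\Bigr)\Bigl(\sum_{k'} n(k')\gamma_{(k')}g(k')\Bigr).
\]
If $f$ and $g$ are constant to leading order, which is what rank-one invariance should give, each factor is a multiple of $\sum_k w_k^*(\log k-\bar x_w^*)=0$ and vanishes exactly. The deviation of $f$ from a constant enters at relative order $k/n$, through the $\min\{\cdot,1\}$ truncation and the exclusion of the hub itself. Bounding $|\gamma_v|\lesssim k\log k/\sqrt{S_{xx}}$ and using the regularity condition $\sum_k k\,n(k)(\log k)^2=O(S_{xx})$ from (S3), I expect the $O_{\mathbb P}(k_{\max}/n)$ term. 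The truncation error is where $\varrho_n$ enters.

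For (b), there are $O_{\mathbb P}(|\mathcal H|^2k_{\max}^2/L_n)$ expected hub--hub edges, each contributing covariance of order $k_{\max}/(kk')$ after variance computation. Combining $|\mathcal H|^2=o(nS_{xx})$ with the $\gamma$ bounds, I aim for $O_{\mathbb P}(k_{\max}^3/n)$, using $\mathbb E[W^3]<\infty$ to control the third moments of neighbor degrees. Term (c) is of smaller order and is absorbed into the same bound.

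Once the off-diagonal sum is $o_{\mathbb P}(1)$, the diagonal term $\sum_v\gamma_v^2\operatorname{Var}_{\mathbf W}(\psi_v)$ follows from (S2). Substituting $\operatorname{Var}_{\mathbf W}(\psi_v)=\tilde\sigma_\infty^2/k\,(1+o(1))$ gives
\[
  \sum_k n(k)\,\frac{w_k^{*2}(\log k-\bar x_w^*)^2}{n(k)^2S_{xx}^*}\cdot\frac{\tilde\sigma_\infty^2}{k}
  =\frac{\sum_k w_k^*(\log k-\bar x_w^*)^2}{S^*_{xx}}=1.
\]

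The main obstacle is step (a): showing rigorously that the separable covariance factorizes with a hub-independent leading coefficient, so that the centering identity kills it. This is delicate because conditioning on realized hub degrees makes the edges no longer independent. I would first compute under independent edges and then transfer via a conditional Poisson-binomial local CLT, whose error is $O(k/n)$ per pair, charging that error to the $k_{\max}/n$ term.
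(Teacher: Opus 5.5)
Your step (b) does not close, and it, rather than step (a), is where the real difficulty lies. For hubs $v\neq v'$ with degrees $k,k'$, the shared edge $A_{vv'}$ contributes $(k'-\mu_0)/k$ to $\psi_v$ and $(k-\mu_0)/k'$ to $\psi_{v'}$. Its covariance is therefore $\frac{(k-\mu_0)(k'-\mu_0)}{kk'}\,p_{vv'}(1-p_{vv'})$, which is of order one per expected hub--hub edge, not $k_{\max}/(kk')$.

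Aggregated with the weights, this term equals $L_n^{-1}\bigl[(\sum_v a_v)^2-\sum_v a_v^2\bigr]$ with $a_v=\gamma_v(k_v-\mu_0)W_v/k_v$. Because $a_v$ carries an extra factor of roughly $k_v$ relative to $\gamma_v$, the centering identity does not annihilate $\sum_v a_v$; it is essentially a $k\,n(k)$-weighted covariance of $k$ and $\log k$.

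Your count of hub--hub edges times uniform bounds $|\gamma_v|\lesssim k_{\max}\log k_{\max}/\sqrt{S_{xx}}$, combined with $|\mathcal H|^2=o(nS_{xx})$, gives only $o\bigl(k_{\max}^4(\log k_{\max})^2\bigr)$. With your per-edge estimate it gives $o\bigl(k_{\max}^3(\log k_{\max})^2\bigr)$. Neither tends to zero, so $O_{\mathbb P}(k_{\max}^3/n)$ is not reachable along the route you sketch.

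The missing step is to keep this structure.
\begin{itemize}
\item The diagonal piece obeys $L_n^{-1}\sum_v a_v^2\le L_n^{-1}k_{\max}^3\sum_v\gamma_v^2/k_v=O(k_{\max}^3/n)$. This uses the normalization $\sum_v\gamma_v^2/k_v=1/\tilde\sigma_\infty^2$, the same identity behind your correct diagonal computation.
\item For the square of the sum, apply Cauchy--Schwarz in the $w_k^*$ inner product: $\bigl|\sum_k w_k^*(k-\mu_0)(\log k-\bar x_w^*)\bigr|\le\bigl(\sum_k w_k^*(k-\mu_0)^2\bigr)^{1/2}\sqrt{S_{xx}^*}$. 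Up to constants, this reduces the piece to $n^{-1}\sum_{v\in\mathcal H}D(v)^3$.
\item That quantity vanishes because $D^3$ is uniformly integrable under $\mathbb E[W^3]<\infty$ and the window is a genuine upper tail, $k_{\mathrm{slope}}(n)\to\infty$.
\end{itemize}
So the third moment enters through the hub degrees in the window, not the neighbor degrees, and $|\mathcal H|^2=o(nS_{xx})$ plays no role here.

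The paper uses that condition in the separable term instead. Working with unconditional $p_{vu}=W_vW_u/L_n$ gives the coefficient $W_v/k_v$. The centering identity then leaves the weight--degree discrepancy $\Xi=\sum_v(W_v-D(v))(\log D(v)-\bar x_w^*)$ to be controlled, and the condition handles it.

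Your degree-conditioned version of (a) is a legitimate alternative that bypasses $\Xi$. There, $\mathbb P(A_{vu}=1\mid D(v)=k)\approx kW_u/L_n$ by rank-one invariance, so the shared-neighbor coefficient becomes constant. You still need to handle the dependence that conditioning on degrees induces.

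Note also that the sum over $v\neq v'$ is the product of sums minus its diagonal. That correction, $-\Theta_n\sum_v\gamma_v^2$ with $\Theta_n:=L_n^{-2}\sum_u W_u^2\operatorname{Var}_{\mathbf W}(D(u))=O(1/n)$, is $O(k_{\max}/n)$. It, rather than the deviation of $f$ from a constant, produces the $k_{\max}/n$ term.
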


\begin{proof}[Proof of Lemma~\ref{lem:crosscov}]
\emph{Diagonal equals one.} By (S2), $\operatorname{Var}_{\mathbf W}(\psi_v)
=\tilde\sigma_\infty^2/D(v)\,(1+o(1))$, so with $w_k^*=kn(k)/\tilde\sigma_\infty^2$,
\[
  \gamma_v^2\operatorname{Var}_{\mathbf W}(\psi_v)
  =\frac{D(v)(\log D(v)-\bar x_w^*)^2}{\tilde\sigma_\infty^2\,S_{xx}^*}\,(1+o(1)).
\]
Summing over $\mathcal H$ and using
$\sum_{v\in\mathcal H}D(v)(\log D(v)-\bar x_w^*)^2=\sum_kn(k)k(\log k-\bar x_w^*)^2
=\tilde\sigma_\infty^2S_{xx}^*$ gives
$\sum_{v}\gamma_v^2\operatorname{Var}_{\mathbf W}(\psi_v)=1+o(1)$.

\emph{Principal off-diagonal term.} Fix $v\ne v'$ with degrees $k,k'$. Writing
$\psi_v=k^{-1}\sum_uA_{vu}(D(u)-\mu_0)$ and using conditional edge independence, the only
non-vanishing contribution to $\operatorname{Cov}_{\mathbf W}(\psi_v,\psi_{v'})$ that is not
suppressed by an extra edge factor comes from common neighbors $u$, for which the same
degree $D(u)$ enters both averages:
\[
  \operatorname{Cov}_{\mathbf W}(\psi_v,\psi_{v'})
  =\frac{1}{kk'}\sum_{u}\operatorname{Cov}_{\mathbf W}\!\bigl(A_{vu}D(u),A_{v'u}D(u)\bigr)+r_{vv'}
  =\frac{1}{kk'}\sum_{u}p_{vu}p_{v'u}\,\sigma_u^2\,(1+o(1))+r_{vv'},
\]
where $\sigma_u^2:=\operatorname{Var}_{\mathbf W}(D(u))=\sum_{w}p_{uw}(1-p_{uw})\le\lambda_u
:=\sum_wp_{uw}$, the $O(1)$ mean shift from conditioning on $A_{vu}=A_{v'u}=1$ is absorbed
into the $(1+o(1))$, and the remainder $r_{vv'}$ collects the covariance arising from hub
adjacency $v\sim v'$ and from edges joining $\mathcal N(v)$ to $\mathcal N(v')$; each such
source carries an extra factor $p_{vv'}$ or $p_{uu'}=O(\varrho_n)$ relative to the principal
term. With $p_{vu}=W_vW_u/L_n$ (equality below the truncation, which affects at most an
$\varrho_n$-fraction of pairs and is absorbed into $r_{vv'}$),
\[
  \frac{1}{kk'}\sum_up_{vu}p_{v'u}\sigma_u^2
  =\frac{W_vW_{v'}}{kk'}\,\Theta_n,
  \qquad
  \Theta_n:=\frac{1}{L_n^2}\sum_uW_u^2\sigma_u^2
  \le\frac{\sum_uW_u^3}{L_n^2}
  =\frac{\mathbb E_n[W^3]}{n\bar W^2}=O(1/n),
\]
the last bound using $\sigma_u^2\le\lambda_u\le W_u$ and $\mathbb E[W^3]<\infty$.

\emph{Cancellation by the centering identity.} Put $b_v:=\gamma_vW_v/D(v)$. Since
$\gamma_v=D(v)(\log D(v)-\bar x_w^*)/(\tilde\sigma_\infty^2\sqrt{S_{xx}^*})$, we have
$b_v=W_v(\log D(v)-\bar x_w^*)/(\tilde\sigma_\infty^2\sqrt{S_{xx}^*})$, and the aggregated
principal term factorizes as
\[
  \Theta_n\sum_{v\ne v'}b_vb_{v'}
  =\Theta_n\Bigl[\bigl(\textstyle\sum_vb_v\bigr)^2-\sum_vb_v^2\Bigr].
\]
The \emph{exact} identity
$\sum_{v\in\mathcal H}D(v)(\log D(v)-\bar x_w^*)=\sum_kn(k)k(\log k-\bar x_w^*)=0$, which is
the definition of $\bar x_w^*$ under $w_k^*\propto kn(k)$, yields
\[
  \sum_vb_v
  =\frac{1}{\tilde\sigma_\infty^2\sqrt{S_{xx}^*}}\sum_{v\in\mathcal H}W_v(\log D(v)-\bar x_w^*)
  =\frac{1}{\tilde\sigma_\infty^2\sqrt{S_{xx}^*}}
    \sum_{v\in\mathcal H}\bigl(W_v-D(v)\bigr)(\log D(v)-\bar x_w^*)
  =:\frac{\Xi}{\tilde\sigma_\infty^2\sqrt{S_{xx}^*}}.
\]
Thus the separable leading part is annihilated up to the weight--degree discrepancy $\Xi$.
Because $\mathbb E_{\mathbf W}[D(v)]=\lambda_v=W_v$ (below the truncation) and
$\operatorname{Var}_{\mathbf W}(D(v))\le W_v$ with pairwise covariances $O(\varrho_n)$, and
$\operatorname{Cov}_{\mathbf W}(D(v),\log D(v))=O(1)$, we obtain
$\mathbb E_{\mathbf W}[\Xi]=O(|\mathcal H|)$ and
$\operatorname{Var}_{\mathbf W}(\Xi)\le C\sum_{v}W_v(\log D(v)-\bar x_w^*)^2
=C\,\tilde\sigma_\infty^2S_{xx}^*(1+o(1))$, so that
$\Xi^2=O_{\mathbb P}\!\bigl(|\mathcal H|^2+S_{xx}^*\bigr)$. Hence
\[
  \Theta_n\Bigl(\textstyle\sum_vb_v\Bigr)^2
  =O_{\mathbb P}\!\left(\frac{|\mathcal H|^2+S_{xx}^*}{n\,S_{xx}^*}\right)
  =o_{\mathbb P}(1),
\]
using $S_{xx}^*\to\infty$ and the design condition $|\mathcal H|^2=o(nS_{xx}^*)$ of (S3).
For the diagonal correction,
\[
  \Theta_n\sum_vb_v^2
  \le\frac{C}{n}\cdot\frac{\sum_kn(k)k^2(\log k-\bar x_w^*)^2}{\tilde\sigma_\infty^4S_{xx}^*}
  \le\frac{C\,k_{\max}}{n\,\tilde\sigma_\infty^2}=O\!\left(\frac{k_{\max}}{n}\right),
\]
since $\sum_kn(k)k^2(\log k-\bar x)^2\le k_{\max}\sum_kn(k)k(\log k-\bar x)^2
=k_{\max}\tilde\sigma_\infty^2S_{xx}^*$.

\emph{Remainder.} The remainder $r_{vv'}$ splits into a two-path term (also within the
neighbor structure), which is separable of the same form as the shared-neighbor term and is
annihilated by the same centering identity, and hub-adjacency and hub-path terms, which are
not. Appendix~\ref{app:crosscov} bounds all of these term by term, using the two-level
decomposition into the within- and between-neighborhood contributions of
Eq.~\eqref{eq:AB-split}. The only non-separable contribution is direct hub--hub adjacency; it
is not controlled by $\varrho_n$ but by the third-moment assumption, and
Appendix~\ref{app:crosscov} shows
$\sum_{v\ne v'}\gamma_v\gamma_{v'}r_{vv'}=O_{\mathbb P}(k_{\max}/n)+O_{\mathbb P}(k_{\max}^3/n)
+O_{\mathbb P}(\varrho_n)=o_{\mathbb P}(1)$ under (S3), which holds automatically when
$\mathbb E[W^3]<\infty$. Collecting these bounds and adding the diagonal proves
$\operatorname{Var}_{\mathbf W}(Z_4^*)=1+o_{\mathbb P}(1)$.
\end{proof}

\begin{proof}[Proof of part (i)]
\emph{Reduction to the oracle statistic.} By $\sum_kw_k(\log k-\bar x_w)=0$, the constant
shift $\mu_0-\hat\mu_0$ contributes zero to the numerator of $\hat\beta$, so $\hat\mu_0$ may
be replaced by $\mu_0$ exactly. By (S1), $\hat{\tilde\sigma}_\infty^2=\tilde\sigma_\infty^2
+o_{\mathbb P}(1)$ and the empirical threshold, class counts, and design quantities
$(\bar x_w,S_{xx})$ equal their oracle counterparts up to $1+o_{\mathbb P}(1)$; since
$Z_4$ is scale-equivariant of order $\tfrac12$ in the common weight factor and continuous in
$(\bar x_w,S_{xx})$, we get $Z_4=Z_4^*\,(1+o_{\mathbb P}(1))$ with $Z_4^*$ as in
\eqref{eq:Z4star-linear}. It therefore suffices to prove $Z_4^*\xrightarrow{d}N(0,1)$.

\emph{Mean and variance.} By (S2), $\mathbb E_{\mathbf W}[\psi_v]=0$, hence
$\mathbb E_{\mathbf W}[Z_4^*]=0$, and Lemma~\ref{lem:crosscov} gives
$\operatorname{Var}_{\mathbf W}(Z_4^*)=1+o_{\mathbb P}(1)$. In particular the naive diagonal
variance is correct to leading order not because the class statistics are independent, which
they are not, but because the aggregate cross-class covariance is annihilated by the
weighted centering and controlled by sparsity.

\emph{Asymptotic normality under local dependence.} The summands $\eta_v:=\gamma_v\psi_v$ of
$Z_4^*=\sum_{v\in\mathcal H}\eta_v$ form a locally dependent array: $\psi_v$ is a measurable
function of the edges incident to the two-neighborhood $B_2(v)$, so for the neighborhoods
\[
  N_v:=\{v'\in\mathcal H:\ E(B_2(v'))\cap E(B_2(v))\ne\varnothing\},
\]
$\eta_v$ is independent of $\{\eta_{v'}:v'\notin N_v\}$, and $\{\eta_{v'}:v'\in N_v\}$ is
independent of $\{\eta_{v'}:v'\notin N_v^{(2)}\}$, where $N_v^{(2)}=\bigcup_{v'\in N_v}N_{v'}$;
these are conditions (LD1)--(LD2) of \citet{chenshao2004normal}. Their Theorem~2.5 bounds the
Kolmogorov distance of $Z_4^*/\!\operatorname{sd}(Z_4^*)$ to $N(0,1)$ by
\[
  C\sum_{v\in\mathcal H}\Bigl(\mathbb E_{\mathbf W}|\eta_v|^2\,\theta_v
  +\mathbb E_{\mathbf W}|\eta_v|\,\theta_v^2\Bigr),
  \qquad
  \theta_v:=\Bigl(\sum_{v'\in N_v}\mathbb E_{\mathbf W}|\eta_{v'}|^{3}\Bigr)^{1/3},
\]
after standardization. By (S2), $\mathbb E_{\mathbf W}|\psi_v|^3=O\{D(v)^{-3/2}\}$, so
$\mathbb E_{\mathbf W}|\eta_v|^3=|\gamma_v|^3O\{D(v)^{-3/2}\}$; the no-domination condition
$\max_kkn(k)(\log k)^2=o(S_{xx})$ makes the diagonal Lyapunov ratio
$\sum_v\mathbb E_{\mathbf W}|\eta_v|^3\to0$, and the sparsity condition (S3), through the same
common-neighbor bound as in Lemma~\ref{lem:crosscov}, controls the extra factors contributed
by the neighborhoods $N_v$, so the displayed bound tends to $0$. Hence
$Z_4^*/\operatorname{sd}_{\mathbf W}(Z_4^*)\xrightarrow{d}N(0,1)$, and since
$\operatorname{sd}_{\mathbf W}(Z_4^*)=1+o_{\mathbb P}(1)$ by Lemma~\ref{lem:crosscov},
Slutsky gives $Z_4^*\xrightarrow{d}N(0,1)$, hence $Z_4\xrightarrow{d}N(0,1)$ conditionally on
$\mathbf W$.

\emph{Unconditioning.} The conditional c.d.f.\ of $Z_4$ converges to $\Phi$ for
$\mathbb P_{\mathbf W}$-almost every weight sequence satisfying (S1)--(S3); since c.d.f.'s are
bounded, dominated convergence over $\mathbf W$ yields the unconditional
$Z_4\xrightarrow{d}N(0,1)$. The one-sided test $\{Z_4>z_{1-\alpha}\}$ therefore has
asymptotic level $\alpha$.
\end{proof}

\begin{remark}[Fitted-null calibration by conditional bootstrap]
\label{rem:method4-bootstrap}
Conditions (S2)--(S3) are sufficient but not necessary, and (S3) can be delicate for very
heavy-tailed weights, where $\hat{\tilde\sigma}_\infty^2$ is itself unstable. In that regime we
calibrate $Z_4$ by the same conditional bootstrap used for the non-rank-one analysis in
Section~\ref{subsec:highschool_examples}: draw $B$ graphs from the fitted Chung--Lu null with
the estimated weights, recompute $Z_4$ on each, and compare the observed value to the resulting
reference distribution. The resampled graphs target the finite-sample joint law of the class
statistics under the fitted null, including the cross-class dependence analyzed in
Lemma~\ref{lem:crosscov}. We therefore use this model-based calibration whenever the analytic
$N(0,1)$ approximation is in doubt.
\end{remark}

\begin{proof}[Proof of part (ii)]
Using $\sum_k w_k(\log k-\bar x_w)=0$, we split
\begin{align*}
  \hat\beta
  &= \frac{\sum_k w_k(\log k-\bar x_w)\,R_k}{S_{xx}}
  = (m+\delta) + \frac{\sum_k w_k(\log k-\bar x_w)\,\epsilon_k}{S_{xx}},
\end{align*}
where $\epsilon_k:=T_k-(m+\delta)\log k-\hat\mu_0$.  Since
$\hat\mu_0=O_P(1)$ and $T_k=(m+\delta)\log k+o_P(\log k)$ for every
$k\in\mathcal{K}^+_{\mathrm{slope}}$ by Theorem~\ref{thm:lln-pa}
(noting that $k\geq k_{\min}\to\infty$), we have $\epsilon_k=o_P(\log k)$.

By the Cauchy--Schwarz inequality,
\[
  \left|\sum_k w_k(\log k-\bar x_w)\,\epsilon_k\right|
  \;\leq\;
  \sqrt{S_{xx}}\;\sqrt{\sum_k w_k\,\epsilon_k^2}.
\]
The term $\sum_k w_k\epsilon_k^2/S_{xx}$ is a weighted mean of $o_P((\log k)^2)$
quantities; since $w_k(\log k)^2/S_{xx}\to 0$ for each $k$ (which follows from
$S_{xx}\to\infty$ and boundedness of the weight-proportion), a Chebyshev
argument gives $\sum_k w_k\epsilon_k^2/S_{xx}\xrightarrow{P}0$.  Therefore
$\hat\beta\xrightarrow{P}m+\delta$.

\noindent
Since $\hat\beta\xrightarrow{P}m+\delta>0$ and $S_{xx}\to\infty$,
\[
  Z_4 = \hat\beta\sqrt{S_{xx}} \;\geq\; \tfrac{m+\delta}{2}\,\sqrt{S_{xx}}
  \;\xrightarrow{P}\;+\infty,
\]
which gives $\mathbb{P}_{\mathrm{PA}}(Z_4>z_{1-\alpha})\to 1$.
\end{proof}

\subsection{Cross-class covariance: term-by-term remainder bounds}
\label{app:crosscov}

This subsection proves the remainder bounds used in Lemma~\ref{lem:crosscov}, controlling
$\operatorname{Cov}_{\mathbf W}(\psi_v,\psi_{v'})$ for $v\ne v'\in\mathcal H$ term by term.
We work under the Chung--Lu null conditional on $\mathbf W$, treat the hub set $\mathcal H$
and the design coefficients as fixed (condition (S1)), and use $p_{ij}=W_iW_j/L_n$.
Truncation at one and conditioning on the realized hub degrees perturb each $p_{ij}$ by a
factor $1+O(\varrho_n)$ \citep[Chap.~6]{van2024random}, which is absorbed into the
remainder throughout. Recall $\lambda_u=\sum_wp_{uw}=W_u(1+O(\varrho_n))$.

\paragraph{Two-level decomposition.}
Let $\mathcal G:=\sigma\bigl(\mathbf W,\{A_{vw}:v\in\mathcal H,\,w\in V\}\bigr)$ be generated
by the weights and all hub-incident edges. Given $\mathcal G$, every neighbor set
$\mathcal N(v)$ and denominator $D(v)=k$ are fixed, and for a non-hub $u$ the degree splits
as $D(u)=D^{\mathcal H}(u)+D^c(u)$ with $D^{\mathcal H}(u)=\sum_{v''\in\mathcal H}A_{uv''}$
$\mathcal G$-measurable and $D^c(u)=\sum_{w\notin\mathcal H}A_{uw}$ independent of
$\mathcal G$; a hub neighbor has $D^c\equiv0$ given $\mathcal G$. Then
\begin{equation}
\label{eq:AB-split}
  \operatorname{Cov}_{\mathbf W}(\psi_v,\psi_{v'})
  =\underbrace{\mathbb E_{\mathbf W}\!\bigl[\operatorname{Cov}(\psi_v,\psi_{v'}\mid\mathcal G)\bigr]}_{(A)}
  +\underbrace{\operatorname{Cov}_{\mathbf W}\!\bigl(\mathbb E[\psi_v\mid\mathcal G],\mathbb E[\psi_{v'}\mid\mathcal G]\bigr)}_{(B)}.
\end{equation}

\paragraph{Term (A): shared-neighbor and two-path (separable, annihilated by centering).}
Because the $\{D^c(u)\}$ are sums of independent non-hub edges that share only the single
edge $A_{uu'}$ for non-hub $u\ne u'$, the within-$\mathcal G$ covariance is \emph{exact}:
\[
  \operatorname{Cov}(\psi_v,\psi_{v'}\mid\mathcal G)
  =\frac{1}{kk'}\Bigl[
    \sum_{u\in\mathcal N(v)\cap\mathcal N(v')\setminus\mathcal H}\sigma_u^{c2}
    +\!\!\sum_{\substack{u\in\mathcal N(v)\setminus\mathcal H,\ u'\in\mathcal N(v')\setminus\mathcal H\\ u\ne u'}}\!\!p_{uu'}(1-p_{uu'})\Bigr],
\]
with $\sigma_u^{c2}=\sum_{w\notin\mathcal H}p_{uw}(1-p_{uw})\le\lambda_u$; the first sum is
the shared-neighbor term, the second the two-path term. Taking $\mathbb E_{\mathbf W}$, so
that $\mathbf 1\{u\in\mathcal N(v)\}\mapsto p_{vu}$,
\[
  (A)=\frac{W_vW_{v'}}{kk'}\,(\Theta_n+\Theta'_n)\,(1+O(\varrho_n)),
  \qquad
  \Theta_n=\frac{\sum_uW_u^2\sigma_u^{c2}}{L_n^2},\quad
  \Theta'_n=\frac{(\sum_uW_u^2)^2}{L_n^3},
\]
both $O(1/n)$ under $\mathbb E[W^3]<\infty$ (respectively $\mathbb E[W^2]<\infty$). Both are
separable in $(W_v/k,\,W_{v'}/k')$, hence so is their sum, and the argument in the proof of
Lemma~\ref{lem:crosscov} applies verbatim with $\Theta_n$ replaced by $\Theta_n+\Theta'_n$:
the exact identity $\sum_vD(v)(\log D(v)-\bar x_w^*)=0$ annihilates the separable leading
part, leaving
\begin{equation}
\label{eq:A-bound}
  \sum_{v\ne v'}\gamma_v\gamma_{v'}(A)=O_{\mathbb P}\!\Bigl(\tfrac{k_{\max}}{n}\Bigr)+o_{\mathbb P}(1).
\end{equation}

\paragraph{Term (B): hub paths (separable) and hub adjacency (non-separable).}
Here $\mathbb E[\psi_v\mid\mathcal G]=k^{-1}\sum_uA_{vu}h_u$ with
$h_u:=D^{\mathcal H}(u)+\bar D^c_u-\mu_0$ and $\bar D^c_u:=\sum_{w\notin\mathcal H}p_{uw}$, a
function of hub-incident edges. Its covariance with the primed version decomposes over
shared hub-incident edges into three types.
\begin{itemize}
\item[(B1)] \emph{Shared-neighbor through $D^{\mathcal H}$}: a common neighbor $u$ whose
  hub-degree $D^{\mathcal H}(u)$ enters both means. Since
  $\operatorname{Var}(D^{\mathcal H}(u))\le\lambda^{\mathcal H}_u:=\sum_{v''\in\mathcal H}p_{uv''}
  =W_u\,L_H/L_n$ with $L_H=\sum_{v''\in\mathcal H}W_{v''}$, this term is at most
  $L_H/L_n\le1$ times the shared-neighbor principal term of (A); it is separable in
  $(W_v/k,W_{v'}/k')$ and is annihilated by the same centering identity, contributing
  $O_{\mathbb P}(k_{\max}/n)+o_{\mathbb P}(1)$.
\item[(B2)] \emph{Hub paths}: a neighbor $u'$ of $v'$ that is itself adjacent to the hub $v$
  (edge $A_{u'v}$ shared between $A_{vu'}$ in the first mean and $D^{\mathcal H}(u')$ in the
  second), and symmetric configurations. Each such term carries an extra factor
  $p_{u'v}=O(\varrho_n)$ relative to (B1), so their aggregate is $O_{\mathbb P}(\varrho_n)$.
\item[(B3)] \emph{Direct hub adjacency}: the single shared edge $A_{vv'}$. This is the only
  contribution that is \emph{not} separable and \emph{not} annihilated by centering, and it
  requires the third-moment assumption. We bound it next.
\end{itemize}

\paragraph{Bounding (B3).} Using $D(v)=k$, $D(v')=k'$ ($\mathcal G$-measurable),
\[
  (\mathrm{B3})_{vv'}
  =\frac{(k-\mu_0)(k'-\mu_0)}{kk'}\,p_{vv'}(1-p_{vv'})
  =\frac{(k-\mu_0)(k'-\mu_0)}{kk'}\cdot\frac{W_vW_{v'}}{L_n}\,(1+O(\varrho_n)).
\]
Set $a_v:=\gamma_v(k_v-\mu_0)W_v/k_v$, so that
$\sum_{v\ne v'}\gamma_v\gamma_{v'}(\mathrm{B3})_{vv'}
=\tfrac1{L_n}\bigl[(\sum_va_v)^2-\sum_va_v^2\bigr](1+O(\varrho_n))$. We bound both pieces.

For the diagonal piece, recall $\sum_v\gamma_v^2\operatorname{Var}_{\mathbf W}(\psi_v)
=\tilde\sigma_\infty^2\sum_v\gamma_v^2/k_v=1$, i.e.\ $\sum_v\gamma_v^2/k_v
=1/\tilde\sigma_\infty^2$. With $|k_v-\mu_0|\le k_v$ and $W_v\le k_v(1+o(1))$,
$a_v^2\le\gamma_v^2k_v^2(1+o(1))$, so
\begin{equation}
\label{eq:B3-diag}
  \frac1{L_n}\sum_va_v^2
  \le\frac{1+o(1)}{L_n}\sum_v\Bigl(\tfrac{\gamma_v^2}{k_v}\Bigr)k_v^3
  \le\frac{(1+o(1))\,k_{\max}^3}{L_n\,\tilde\sigma_\infty^2}
  =O\!\Bigl(\tfrac{k_{\max}^3}{n}\Bigr).
\end{equation}
For the square of the sum, write
$\sum_va_v=\tfrac{1+o(1)}{\tilde\sigma_\infty^2\sqrt{S_{xx}^*}}\sum_k n(k)k(k-\mu_0)(\log k-\bar x_w^*)$,
and apply the Cauchy--Schwarz inequality in the $w_k^*=kn(k)/\tilde\sigma_\infty^2$ inner
product, whose $(\log k-\bar x_w^*)$-norm is $\sqrt{S_{xx}^*}$:
\[
  \Bigl|\sum_kw_k^*(k-\mu_0)(\log k-\bar x_w^*)\Bigr|
  \le\Bigl(\sum_kw_k^*(k-\mu_0)^2\Bigr)^{1/2}\sqrt{S_{xx}^*}.
\]
Hence, using $(k-\mu_0)^2\le k^2$ for $k\ge\mu_0$ (the finitely many classes with $k<\mu_0$
add $O(1)$) and $w_k^*=kn(k)/\tilde\sigma_\infty^2$,
\begin{equation}
\label{eq:B3-sq}
  \frac1{L_n}\Bigl(\sum_va_v\Bigr)^2
  \le\frac{(1+o(1))\sum_kw_k^*(k-\mu_0)^2}{L_n\,\tilde\sigma_\infty^2}
  \le\frac{\sum_{k\ge k_{\mathrm{slope}}(n)}n(k)k^3}{L_n\,\tilde\sigma_\infty^4}
  =\frac{\mathbb E_n\!\bigl[D^3\mathbf 1\{D\ge k_{\mathrm{slope}}(n)\}\bigr]}{\bar D\,\tilde\sigma_\infty^4}(1+o(1)).
\end{equation}
Because $\mathbb E[W^3]<\infty$ implies $\mathbb E[D^3]<\infty$ with $D^3$ uniformly
integrable, and $k_{\mathrm{slope}}(n)\to\infty$ by (S3), the tail expectation in
\eqref{eq:B3-sq} tends to $0$. Combining \eqref{eq:B3-diag}--\eqref{eq:B3-sq},
\begin{equation}
\label{eq:B3-bound}
  \sum_{v\ne v'}\gamma_v\gamma_{v'}(\mathrm{B3})_{vv'}
  =O_{\mathbb P}\!\Bigl(\tfrac{k_{\max}^3}{n}\Bigr)+o_{\mathbb P}(1)=o_{\mathbb P}(1).
\end{equation}

\paragraph{Conclusion.} Adding \eqref{eq:A-bound}, the (B1)--(B2) bounds, and
\eqref{eq:B3-bound},
\[
  \sum_{v\ne v'}\gamma_v\gamma_{v'}\operatorname{Cov}_{\mathbf W}(\psi_v,\psi_{v'})
  =O_{\mathbb P}\!\Bigl(\tfrac{k_{\max}}{n}\Bigr)
  +O_{\mathbb P}\!\Bigl(\tfrac{k_{\max}^3}{n}\Bigr)
  +O_{\mathbb P}(\varrho_n)+o_{\mathbb P}(1)
  =o_{\mathbb P}(1)
\]
under (S3). The only non-separable contribution is direct hub--hub adjacency (B3), and it is
controlled precisely by the finite-third-moment assumption $\mathbb E[W^3]<\infty$ already
maintained throughout, together with the slope window being a genuine upper tail. This is the
bound asserted in Lemma~\ref{lem:crosscov}. $\qed$

\section{Reddit interaction networks}
\label{sec:reddit_analysis}
We analyze monthly user interaction networks from \texttt{reddit.com}, a platform where users form topic-based discussion communities called subreddits. The dataset covers 2046 subreddits in 2014 \citep{hamilton2017loyalty} and is available from the Stanford Network Analysis Project (SNAP, \url{http://snap.stanford.edu/data/web-RedditNetworks.html}).

We focus on four subreddits that span a range of community sizes and behavioral contexts:
\begin{itemize}
	\item \textbf{NBA} (\texttt{r/nba}): discussions on NBA games, teams, and players;
	\item \textbf{CFB} (\texttt{r/CFB}): college football discussions;
	\item \textbf{Pics} (\texttt{r/pics}): user-submitted images and casual commentary;
	\item \textbf{Funny} (\texttt{r/funny}): humor-focused image and link sharing.
\end{itemize}
The July and August sports graphs contain $31522$ and $18139$ vertices. The June \texttt{r/pics} and \texttt{r/funny} graphs contain $123970$ and $149806$ vertices, with $400616$ and $312780$ edges. We project the directed replies to undirected simple graphs by dropping direction, collapsing repeated pairs, and removing self-loops. The analysis therefore concerns interaction incidence rather than frequency or conversational flow.

We apply Methods~1--3 to the four selected monthly Reddit interaction networks, testing the null hypothesis
\[
  H_0:\quad \mathbb{E}\{\bar{D}_k\mid D_\varnothing=k\}=\hat{\mu}_0
\]
that the observed local neighbor-degree structure is consistent with a
Chung--Lu IRG after plug-in estimation of the null parameters.  For each
network, $\hat{\mu}_0$ and $\hat{\tilde\sigma}_\infty$ are computed from its
empirical degree sequence.

The Reddit level window uses the empirical $95$th percentile of the
positive-degree \emph{vertices}, with all ties retained.  The resulting
thresholds for \texttt{r/nba}, \texttt{r/CFB}, \texttt{r/pics}, and
\texttt{r/funny} are $128$, $77$, $22$, and $14$.  They retain respectively
$1587$, $911$, $6538$, and $8167$ vertices and contain $490$, $277$, $248$,
and $191$ distinct degree classes.  Thus the reported $M$ is the number of
retained distinct classes, not $5\%$ of the number of distinct degrees.

Because the Reddit tails do not support relying on the
finite-third-moment approximation, we calibrate all four methods by a
fitted-null bootstrap.  We set the fitted Chung--Lu expected degrees to
$\hat w_i=D_i$ and draw $B_{\mathrm{boot}}=999$ independent graphs on
the observed vertex set.  For each bootstrap graph we refit the nuisance
parameters, reapply the same vertex-percentile level window and
distinct-degree slope window, and recompute the complete Method~1--4
pipeline.  For a scalar discrepancy $Q_j$ from Method $j$, the reported
bootstrap $p$-value is
\[
  p_j^{\mathrm{boot}}
  =
  \frac{1+\sum_{b=1}^{B_{\mathrm{boot}}}
  \mathbf 1\{Q_j^{*(b)}\geq Q_j^{\mathrm{obs}}\}}
  {B_{\mathrm{boot}}+1}.
\]
For the two-sided tests we use $Q_1=|Z_1|$,
$Q_2=-\log p_2^{\mathrm{ana}}$, $Q_3=-\log p_3^{\mathrm{ana}}$, and
$Q_4=|Z_4|$; the directional Method~4 test instead compares the signed
$Z_4$ statistics in the specified tail.  Tables~\ref{tab:reddit-nba}--%
\ref{tab:reddit-funny} report the resulting fitted-null bootstrap
$p$-values at $\alpha=0.05$.  For Method~4, let $c^*_{0.95}$ be the empirical
$0.95$ quantile of $|Z_4^*|$ and
$\widehat{\mathrm{se}}(\hat\beta)=|\hat\beta/Z_4|$; the reported
studentized-bootstrap interval is
$\hat\beta\pm c^*_{0.95}\widehat{\mathrm{se}}(\hat\beta)$.

With bootstrap calibration, Methods~1--3 all reject for \texttt{r/nba},
\texttt{r/CFB}, and \texttt{r/pics}.  For \texttt{r/funny}, the
maximum-degree test does not reject ($p=0.622$), whereas both pooled tests
reject ($p=0.006$), showing that the departure is distributed across the hub
window rather than concentrated at its single largest degree.
\begin{table}[htbp]
  \centering
  \caption{Test outcome for \texttt{r/nba} under the Chung-Lu null
    ($\hat{\mu}_0=279.51$, $\hat{\tilde\sigma}_\infty=338.72$,
    $\alpha=0.05$).  The pooled methods use the upper $5\%$ of
    positive-degree vertices, with $M$ denoting retained distinct classes;
    $p$-values are fitted-null bootstrap values.}
  \label{tab:reddit-nba}
  \begin{tabular}{p{0.32\textwidth}p{0.25\textwidth}cc}
    \toprule
    Method & Degree classes used & $p$-value & Decision \\
    \midrule
    Method~1 (maximum-degree hub) & $k^*=1845$, $n(k^*)=1$ & $1.00\times10^{-3}$ & Reject CL \\
    Method~2 (Bonferroni) & $k_{\mathrm{level}}(n)=128$, $M=490$ & $1.00\times10^{-3}$ & Reject CL \\
    Method~3 (Simes) & $k_{\mathrm{level}}(n)=128$, $M=490$ & $1.00\times10^{-3}$ & Reject CL \\
    \bottomrule
  \end{tabular}
\end{table}

\begin{table}[htbp]
  \centering
  \caption{Test outcome for \texttt{r/CFB} under the Chung-Lu null
    ($\hat{\mu}_0=151.52$, $\hat{\tilde\sigma}_\infty=195.30$,
    $\alpha=0.05$).  The pooled methods use the upper $5\%$ of
    positive-degree vertices; $p$-values are fitted-null bootstrap values.}
  \label{tab:reddit-cfb}
  \begin{tabular}{p{0.32\textwidth}p{0.25\textwidth}cc}
    \toprule
    Method & Degree classes used & $p$-value & Decision \\
    \midrule
    Method~1 (maximum-degree hub) & $k^*=1155$, $n(k^*)=1$ & $1.00\times10^{-3}$ & Reject CL \\
    Method~2 (Bonferroni) & $k_{\mathrm{level}}(n)=77$, $M=277$ & $1.00\times10^{-3}$ & Reject CL \\
    Method~3 (Simes) & $k_{\mathrm{level}}(n)=77$, $M=277$ & $1.00\times10^{-3}$ & Reject CL \\
    \bottomrule
  \end{tabular}
\end{table}

\begin{table}[htbp]
  \centering
  \caption{Test outcome for \texttt{r/pics} under the Chung--Lu null
    ($\hat{\mu}_0=41.80$, $\hat{\tilde\sigma}_\infty=85.35$,
    $\alpha=0.05$), full network.  The pooled methods use the upper $5\%$
    of positive-degree vertices; $p$-values are fitted-null bootstrap values.}
  \label{tab:reddit-pics}
  \begin{tabular}{p{0.32\textwidth}p{0.25\textwidth}cc}
    \toprule
    Method & Degree classes used & $p$-value & Decision \\
    \midrule
    Method~1 (maximum-degree hub) & $k^*=843$, $n(k^*)=1$ & $1.00\times10^{-3}$ & Reject CL \\
    Method~2 (Bonferroni) & $k_{\mathrm{level}}(n)=22$, $M=248$ & $1.00\times10^{-3}$ & Reject CL \\
    Method~3 (Simes) & $k_{\mathrm{level}}(n)=22$, $M=248$ & $1.00\times10^{-3}$ & Reject CL \\
    \bottomrule
  \end{tabular}
\end{table}

\begin{table}[htbp]
  \centering
  \caption{Test outcome for \texttt{r/funny} under the Chung-Lu null
    ($\hat{\mu}_0=25.29$, $\hat{\tilde\sigma}_\infty=53.11$,
    $\alpha=0.05$), full network.  The pooled methods use the upper $5\%$
    of positive-degree vertices; $p$-values are fitted-null bootstrap values.}
  \label{tab:reddit-funny}
  \begin{tabular}{p{0.32\textwidth}p{0.25\textwidth}cc}
    \toprule
    Method & Degree classes used & $p$-value & Decision \\
    \midrule
    Method~1 (maximum-degree hub) & $k^*=571$, $n(k^*)=1$ & $6.22\times10^{-1}$ & Do not reject \\
    Method~2 (Bonferroni) & $k_{\mathrm{level}}(n)=14$, $M=191$ & $6.00\times10^{-3}$ & Reject CL \\
    Method~3 (Simes) & $k_{\mathrm{level}}(n)=14$, $M=191$ & $6.00\times10^{-3}$ & Reject CL \\
    \bottomrule
  \end{tabular}
\end{table}

Method~4 uses the upper $25\%$ of distinct degrees and is reported two-sided. Its sign determines whether the level departure is positive, negative, or flat over hub degrees.
Table~\ref{tab:reddit-method4} reports the Method~4 estimates, and
Figure~\ref{fig:method4-real} displays the four residual profiles.  The
slopes are unchanged by the calibration switch, but their intervals,
$p$-values, and decisions are obtained from the fitted-null bootstrap.

\begin{table}[htbp]
  \centering
  \caption{Method~4 on the four Reddit networks under the Chung--Lu null,
    reported two-sided.  Intervals, $p$-values, and decisions use the
    fitted-null bootstrap; a negative slope indicates disassortative mixing
    and a positive slope the preferential-attachment direction
    ($\alpha=0.05$, full networks).}
  \label{tab:reddit-method4}
  \begin{tabular}{lccrcrl}
    \toprule
    Network & $k_{\mathrm{slope}}(n)$ & $M$ & $\hat\beta$ & $95\%$ boot.\ CI & $p_{\mathrm{boot}}$ & Decision \\
    \midrule
    \texttt{r/nba}   & 546 & 155 & $-53.30$ & [$-97.24$, $-9.35$] & $1.00\times10^{-3}$ & Reject CL (disassortative) \\
    \texttt{r/CFB}   & 297 & 89  & $-32.80$ & [$-56.95$, $-8.65$] & $1.00\times10^{-3}$ & Reject CL (disassortative) \\
    \texttt{r/pics}  & 224 & 68  & $+2.24$  & [$-0.18$, $4.66$] & $7.20\times10^{-2}$ & Inconclusive \\
    \texttt{r/funny} & 166 & 51  & $-3.37$  & [$-5.83$, $-0.91$] & $9.00\times10^{-3}$ & Reject CL (disassortative) \\
    \bottomrule
  \end{tabular}
\end{table}

\begin{figure}[htbp]
  \centering
  \includegraphics[width=0.85\linewidth]{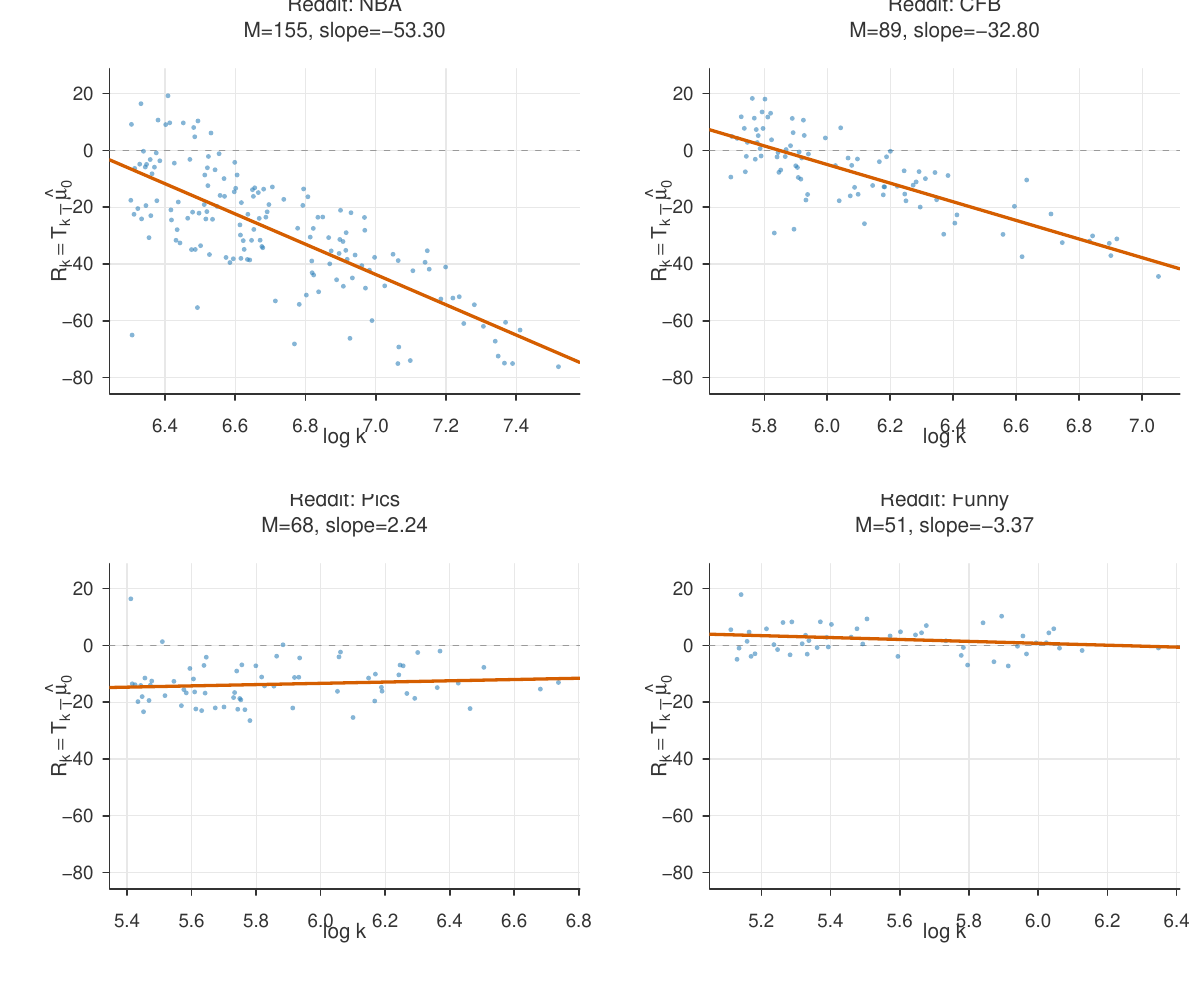}
  \caption{Standardized residuals $R_k=T_k-\hat\mu_0$ against $\log k$ for the
    four networks, with the weighted least-squares fit (red). \texttt{r/nba},
    \texttt{r/CFB}, and \texttt{r/funny} show negative slopes under the common
    upper-$25\%$ high-degree window; \texttt{r/pics} is inconclusive. None shows
    the positive slope of preferential attachment.}
  \label{fig:method4-real}
\end{figure}

The bootstrap-calibrated results therefore preserve the negative residual
direction for \texttt{r/nba}, \texttt{r/CFB}, and \texttt{r/funny}; the
\texttt{r/pics} slope remains inconclusive.  The level tests additionally
reject \texttt{r/pics}, so a level failure need not imply a monotone
high-degree trend.  None of the four networks provides significant evidence
for the positive residual slope predicted by preferential attachment under
the Chung--Lu benchmark.

\bibliographystyle{chicago}
\bibliography{ref.bib}

@article{cirkovic2024modeling,
  title={Modeling random networks with heterogeneous reciprocity},
  author={Cirkovic, Daniel and Wang, Tiandong},
  journal={Journal of Machine Learning Research},
  volume={25},
  number={10},
  pages={1--40},
  year={2024}
}

@incollection{benjamini2011recurrence,
  title={Recurrence of distributional limits of finite planar graphs},
  author={Benjamini, Itai and Schramm, Oded},
  booktitle={Selected Works of Oded Schramm},
  pages={533--545},
  year={2011},
  publisher={Springer}
}

@incollection{aldous2004objective,
  title={The objective method: probabilistic combinatorial optimization and local weak convergence},
  author={Aldous, David and Steele, J Michael},
  booktitle={Probability on discrete structures},
  pages={1--72},
  year={2004},
  publisher={Springer}
}

@book{van2024random,
  title={Random graphs and complex networks},
  author={Van Der Hofstad, Remco},
  volume={2},
  year={2024},
  publisher={Cambridge University Press}
}

@inproceedings{hamilton2017loyalty,
  title={Loyalty in online communities},
  author={Hamilton, William and Zhang, Justine and Danescu-Niculescu-Mizil, Cristian and Jurafsky, Dan and Leskovec, Jure},
  booktitle={Proceedings of the International AAAI conference on web and social media},
  volume={11},
  pages={540--543},
  year={2017}
}

@article{fournet2014contact,
  title={Contact patterns among high school students},
  author={Fournet, Julie and Barrat, Alain},
  journal={PLOS ONE},
  volume={9},
  number={9},
  pages={e107878},
  year={2014},
  publisher={Public Library of Science}
}

@article{mastrandrea2015contact,
  title={Contact patterns in a high school: A comparison between data collected using wearable sensors, contact diaries and friendship surveys},
  author={Mastrandrea, Rossana and Fournet, Julie and Barrat, Alain},
  journal={PLOS ONE},
  volume={10},
  number={9},
  pages={e0136497},
  year={2015},
  doi={10.1371/journal.pone.0136497},
  publisher={Public Library of Science}
}

@article{bollobas2007phase,
  title={The phase transition in inhomogeneous random graphs},
  author={Bollob{\'a}s, B{\'e}la and Janson, Svante and Riordan, Oliver},
  journal={Random Structures \& Algorithms},
  volume={31},
  number={1},
  pages={3--122},
  year={2007},
  publisher={Wiley}
}

@article{chung2002connected,
  title={Connected components in random graphs with given expected degree sequences},
  author={Chung, Fan and Lu, Linyuan},
  journal={Annals of Combinatorics},
  volume={6},
  number={2},
  pages={125--145},
  year={2002},
  publisher={Springer}
}

@article{barabasi1999emergence,
  title={Emergence of scaling in random networks},
  author={Barab{\'a}si, Albert-L{\'a}szl{\'o} and Albert, R{\'e}ka},
  journal={Science},
  volume={286},
  number={5439},
  pages={509--512},
  year={1999},
  publisher={American Association for the Advancement of Science}
}

@article{price1976general,
  title={A general theory of bibliometric and other cumulative advantage processes},
  author={Price, Derek J. de Solla},
  journal={Journal of the American Society for Information Science},
  volume={27},
  number={5},
  pages={292--306},
  year={1976},
  publisher={Wiley}
}

@article{bollobas2001degree,
  title={The degree sequence of a scale-free random graph process},
  author={Bollob{\'a}s, B{\'e}la and Riordan, Oliver and Spencer, Joel and Tusn{\'a}dy, G{\'a}bor},
  journal={Random Structures \& Algorithms},
  volume={18},
  number={3},
  pages={279--290},
  year={2001},
  publisher={Wiley}
}

@article{krapivsky2000connectivity,
  title={Connectivity of growing random networks},
  author={Krapivsky, P. L. and Redner, Sidney and Leyvraz, Francois},
  journal={Physical Review Letters},
  volume={85},
  number={21},
  pages={4629--4632},
  year={2000},
  publisher={APS}
}

@article{jeong2003measuring,
  title={Measuring preferential attachment in evolving networks},
  author={Jeong, Hawoong and Neda, Zoltan and Barab{\'a}si, Albert-L{\'a}szl{\'o}},
  journal={Europhysics Letters},
  volume={61},
  number={4},
  pages={567--572},
  year={2003},
  publisher={IOP Publishing}
}

@article{simes1986improved,
  title={An improved {B}onferroni procedure for multiple tests of significance},
  author={Simes, R John},
  journal={Biometrika},
  volume={73},
  number={3},
  pages={751--754},
  year={1986},
  publisher={Oxford University Press}
}

@article{clauset2009power,
  title={Power-law distributions in empirical data},
  author={Clauset, Aaron and Shalizi, Cosma Rohilla and Newman, Mark E J},
  journal={SIAM Review},
  volume={51},
  number={4},
  pages={661--703},
  year={2009},
  publisher={SIAM}
}

@book{petrov1975sums,
  title={Sums of Independent Random Variables},
  author={Petrov, Valentin V},
  year={1975},
  publisher={Springer-Verlag},
  address={Berlin}
}

@article{newman2002assortative,
  title={Assortative mixing in networks},
  author={Newman, Mark E J},
  journal={Physical Review Letters},
  volume={89},
  number={20},
  pages={208701},
  year={2002},
  publisher={APS}
}

@article{newman2003mixing,
  title={Mixing patterns in networks},
  author={Newman, Mark E J},
  journal={Physical Review E},
  volume={67},
  number={2},
  pages={026126},
  year={2003},
  publisher={APS}
}

@article{pastorsatorras2001dynamical,
  title={Dynamical and correlation properties of the {I}nternet},
  author={Pastor-Satorras, Romualdo and V{\'a}zquez, Alexei and Vespignani, Alessandro},
  journal={Physical Review Letters},
  volume={87},
  number={25},
  pages={258701},
  year={2001},
  publisher={APS}
}

@article{lei2016goodness,
  title={A goodness-of-fit test for stochastic block models},
  author={Lei, Jing},
  journal={The Annals of Statistics},
  volume={44},
  number={1},
  pages={401--424},
  year={2016},
  publisher={Institute of Mathematical Statistics}
}

@article{bickel2016hypothesis,
  title={Hypothesis testing for automated community detection in networks},
  author={Bickel, Peter J and Sarkar, Purnamrita},
  journal={Journal of the Royal Statistical Society Series B},
  volume={78},
  number={1},
  pages={253--273},
  year={2016},
  publisher={Wiley}
}

@article{le2017concentration,
  title={Concentration and regularization of random graphs},
  author={Le, Can M and Levina, Elizaveta and Vershynin, Roman},
  journal={Random Structures \& Algorithms},
  volume={51},
  number={3},
  pages={538--561},
  year={2017},
  publisher={Wiley}
}

@article{ji2016coauthorship,
  title={Coauthorship and citation networks for statisticians},
  author={Ji, Pengsheng and Jin, Jiashun},
  journal={The Annals of Applied Statistics},
  volume={10},
  number={4},
  pages={1779--1812},
  year={2016},
  doi={10.1214/15-AOAS896}
}

@article{ji2022cocitation,
  title={Co-citation and co-authorship networks of statisticians},
  author={Ji, Pengsheng and Jin, Jiashun and Ke, Zheng Tracy and Li, Wanshan},
  journal={Journal of Business \& Economic Statistics},
  volume={40},
  number={2},
  pages={469--485},
  year={2022},
  doi={10.1080/07350015.2021.1978469}
}

@article{newman2001random,
  title={Random graphs with arbitrary degree distributions and their applications},
  author={Newman, Mark E. J. and Strogatz, Steven H. and Watts, Duncan J.},
  journal={Physical Review E},
  volume={64},
  number={2},
  pages={026118},
  year={2001},
  doi={10.1103/PhysRevE.64.026118}
}

@article{norros2006conditionally,
  title={On a conditionally {P}oissonian graph process},
  author={Norros, Ilkka and Reittu, Hannu},
  journal={Advances in Applied Probability},
  volume={38},
  number={1},
  pages={59--75},
  year={2006},
  doi={10.1239/aap/1143936140}
}

@article{hunter2008goodness,
  title={Goodness of fit of social network models},
  author={Hunter, David R. and Goodreau, Steven M. and Handcock, Mark S.},
  journal={Journal of the American Statistical Association},
  volume={103},
  number={481},
  pages={248--258},
  year={2008},
  doi={10.1198/016214507000000446}
}

@article{yao2018average,
  title={Average nearest neighbor degrees in scale-free networks},
  author={Yao, Dong and van der Hoorn, Pim and Litvak, Nelly},
  journal={Internet Mathematics},
  volume={1},
  number={1},
  pages={1--38},
  year={2018},
  doi={10.24166/im.02.2018}
}

@article{barrat2005rate,
  title={Rate equation approach for correlations in growing network models},
  author={Barrat, Alain and Pastor-Satorras, Romualdo},
  journal={Physical Review E},
  volume={71},
  number={3},
  pages={036127},
  year={2005},
  doi={10.1103/PhysRevE.71.036127}
}

@article{krot2017assortativity,
  title={Assortativity in generalized preferential attachment models},
  author={Krot, Alexander and Ostroumova Prokhorenkova, Liudmila},
  journal={Internet Mathematics},
  volume={1},
  number={1},
  pages={1--16},
  year={2017},
  doi={10.24166/im.15.2017}
}

@article{broido2019scale,
  title={Scale-free networks are rare},
  author={Broido, Anna D. and Clauset, Aaron},
  journal={Nature Communications},
  volume={10},
  pages={1017},
  year={2019},
  doi={10.1038/s41467-019-08746-5}
}

@article{wan2017fitting,
  title={Fitting the linear preferential attachment model},
  author={Wan, Phyllis and Wang, Tiandong and Davis, Richard A. and Resnick, Sidney I.},
  journal={Electronic Journal of Statistics},
  volume={11},
  number={2},
  pages={3738--3780},
  year={2017},
  doi={10.1214/17-EJS1327}
}

@article{wang2019hill,
  title={Consistency of {H}ill estimators in a linear preferential attachment model},
  author={Wang, Tiandong and Resnick, Sidney I.},
  journal={Extremes},
  volume={22},
  number={1},
  pages={1--28},
  year={2019},
  doi={10.1007/s10687-018-0335-7}
}

@article{chenshao2004normal,
  title={Normal approximation under local dependence},
  author={Chen, Louis H. Y. and Shao, Qi-Man},
  journal={The Annals of Probability},
  volume={32},
  number={3},
  pages={1985--2028},
  year={2004},
  doi={10.1214/009117904000000450}
}

@article{sarkar1998probability,
  title={Some probability inequalities for ordered {MTP}$_2$ random variables: a proof of the {S}imes conjecture},
  author={Sarkar, Sanat K.},
  journal={The Annals of Statistics},
  volume={26},
  number={2},
  pages={494--504},
  year={1998},
  doi={10.1214/aos/1028144846}
}

@article{benjamini2001control,
  title={The control of the false discovery rate in multiple testing under dependency},
  author={Benjamini, Yoav and Yekutieli, Daniel},
  journal={The Annals of Statistics},
  volume={29},
  number={4},
  pages={1165--1188},
  year={2001},
  doi={10.1214/aos/1013699998}
}

\end{document}